\newcommand{\A}{{\cal A}}
\newcommand{\T}{{\cal T}}
\newcommand{\Real}{\mathbb R}
\newcommand{\To}{\longrightarrow}
\newcommand{\p}{\pi^{-1}(TM)}
\newcommand {\cp}{\mathfrak{X}(\pi (M))}
\def\o#1{\overline{#1}}
\def\Section#1{\vspace{30truept}\addtocounter{section}{1}\setcounter{thm}{0}\setcounter{equation}{0}
{\noindent\Large\bf\arabic{section}.~~#1}\par \vspace{12pt}}
\newtheorem{thm}{Theorem}[section]
\newtheorem{lem}[thm]{Lemma}
\newtheorem{prop}[thm]{Proposition}
\newtheorem{defn}[thm]{Definition}
\newtheorem{rem}[thm]{Remark}
\numberwithin{equation}{section}
\begin{document}
\title{{\bf  CONFORMAL CHANGE OF SPECIAL FINSLER SPACES}}
\author{{\bf Nabil L. Youssef\,$^{\dagger}$,
 S. H. Abed$^{\ddagger}$ and A. Soleiman$^{\sharp}$}}
\date{}

\maketitle                     
\vspace{-1.15cm}
\begin{center}
{$^{\dag}$Department of Mathematics, Faculty of Science,\\ Cairo
University, Giza, Egypt}
\end{center}
\vspace{-0.8cm}
\begin{center}
nlyoussef2003@yahoo.fr,\ nyoussef@frcu.eun.eg
\end{center}

\begin{center}
{$^{\ddag}$Department of Mathematics, Faculty of Science,\\ Cairo
University, Giza, Egypt}
\end{center}
\vspace{-0.8cm}
\begin{center}
sabed52@yahoo.fr,\ sabed@frcu.eun.eg
\end{center}

\begin{center}
{$^{\sharp}$Department of Mathematics, Faculty of Science,\\ Benha
University, Benha,
 Egypt}
\end{center}
\vspace{-0.8cm}
\begin{center}
soleiman@mailer.eun.eg
\end{center}

\vspace{1cm} \maketitle
\smallskip

\noindent{\bf Abstract.} The present paper is a continuation of
 a foregoing paper [Tensor, N. S., 69 (2008), 155-178]. The main
 aim  is to establish \emph{an
intrinsic investigation} of the  conformal change of the most
important special Finsler spaces, namely, $C^{h}$-recurrent,
$C^{v}$-recurrent, $C^{0}$-recurrent, $C_{2}$-like,
quasi-$C$-reducible, $C$-reducible, Berwald space,
$S^{v}$-recurrent, $P^*$-Finsler manifold, $R_{3}$-like,
$P$-symmetric, Finsler manifold of $p$-scalar curvature and Finsler
manifold of $s$-$ps$-curvature. Necessary and
 sufficient  conditions for such special Finsler manifolds to be
invariant under a conformal change are obtained.   Moreover, the
conformal change of Chern and Hashiguchi connections, as well as
their curvature tensors, are given.

\bigskip
\medskip\noindent{\bf Keywords.\/}\, Conformal change,  $C^{h}$-recurrent, $C^{v}$-recurrent,
$C^{0}$-recurrent, $C_{2}$-like, quasi-$C$-reducible, $C$-reducible,
Berwald space, $S^{v}$-recurrent, $P^*$-Finsler manifold,
$R_{3}$-like, $P$-symmetric, Chern connection,  Hashiguchi
connection.

\bigskip
\medskip\noindent{\bf  2000 Mathematics Subject Classification.\/} 53C60,
53B40.

\newpage

\vspace{30truept}\centerline{\Large\bf{Introduction}}\vspace{12pt}
\par
 Studying Finsler geometry one encounters substantial
difficulties trying to seek analogues of classical global, or
sometimes even local, results of Riemannian geometry. These
difficulties arise mainly from the fact that in Finsler geometry all
geometric objects  depend not only on positional coordinates, as in
Riemannian geometry, but also on directional arguments.
\par
The infinitesimal transformations in Riemannian and  Finsler
geometries  are important, not only in differential geometry,  but
also in application to other branches of science, especially in
the process of geometrization of physical theories.
\par
The theory of conformal changes in Riemannian geometry has been
deeply studied (\emph{locally and intrinsically}) by many authors.
As regards to Finsler geometry, an almost complete \emph{local} theory of
conformal changes has been established (\cite{r60}, \cite{r23}, \cite{Balkan.2},
\cite{r5}, \cite{r6}, \cite{Balkan.1}, \cite{r73},
, $\cdots$, etc.).
\par
In \cite{r62}, we investigated intrinsically conformal changes in
Finsler geometry, where we got, among other results, a
characterization of conformal changes. Also the conformal change
of Barthel connection and its curvature tensor were studied.
Moreover, the conformal changes of Cartan and Berwald connections
as well as their curvature tensors, were obtained.
\par
The present paper is a continuation of ~\cite{r62} where we
present an \emph{intrinsic} theory of conformal changes of
special Finsler spaces. Moreover, we study the conformal change of
Chern and Hashiguchi connections
\par
The  paper consists of two parts preceded by an introductory
section $(\S 1)$, which provides a brief account of the basic
definitions and concepts necessary for this work.
\par
In the first part $(\S 2)$, the conformal  change of Chern
 and  Hashiguchi connections, as well as their curvature
tensors, are given.
\par
In the second part $(\S 3)$,  we provide an intrinsic
investigation  of the conformal change of the most important
special Finsler spaces, namely, $C^{h}$-recurrent,
$C^{v}$-recurrent, $C^{0}$-recurrent, $S^{v}$-recurrent,
$P^*$-manifold, $R_{3}$-like, Finsler manifold
 of $p$-scalar curvature and of $s$-$ps$-curvature, $\cdots,$ etc.
  Moreover, we obtain  necessary and sufficient
 conditions for such special Finsler manifolds to be
invariant under a conformal change.
\par
Finally, it should be noted that the present work is formulated in
a prospective modern coordinate-free form, without being trapped
into the complications of indices. However, some important results
of \cite{r23}, \cite{r5}, \cite{r6} and others (obtained in local
coordinates) are immediately derived from the obtained global
results (when localized).

\Section{Notation and Preliminaries}

In this section, we give a brief account of the basic concepts
 of the pullback approach to global Finsler geometry necessary for this work. For more
 detail, we refer to \cite{r58}, \cite{r91} and \cite{r48}.
 We assume that all geometric objects treated are of class
$C^{\infty}$. The
following notations are to be used throughout this paper:\\
 $M$: a real paracompact differentiable manifold of finite dimension $n$ and of
class $C^{\infty}$,\\
 $\mathfrak{F}(M)$: the $\Real$-algebra of differentiable functions
on $M$,\\
 $\mathfrak{X}(M)$: the $\mathfrak{F}(M)$-module of vector fields
on $M$,\\
$\pi_{M}:TM\longrightarrow M$: the tangent bundle of $M$,\\
$\pi^{*}_{M}:T^{*}M\longrightarrow M$: the cotangent bundle of $M$,\\
$\pi: \T M\longrightarrow M$: the subbundle of nonzero vectors
tangent to $M$,\\
$V(TM)$: the vertical subbundle of the bundle $TTM$,\\
 $P:\pi^{-1}(TM)\longrightarrow \T M$: the pullback of the
tangent bundle $TM$ by $\pi$,\\
$P^{*}:\pi^{-1}(T^{*}M)\longrightarrow \T M$: the pullback of the
cotangent bundle $T^{*}M$ by $\pi$,\\
 $\mathfrak{X}(\pi (M))$: the $\mathfrak{F}(TM)$-module of
differentiable sections of  $\pi^{-1}(T M)$,\\
 $\mathfrak{X}^{*}(\pi (M))$: the $\mathfrak{F}(TM)$-module of
differentiable sections of  $\pi^{-1}(T^{*} M)$,\\
$ i_{X}$: the interior product with respect to  $X
\in\mathfrak{X}(M)$,\\
$df$: the exterior derivative  of $f\in \mathfrak{F}(M)$,\\
$ d_{L}:=[i_{L},d]$, $i_{L}$ being the interior derivative with
respect to a vector form $L$.

\par Elements  of  $\mathfrak{X}(\pi (M))$ will be called
$\pi$-vector fields and will be denoted by barred letters
$\overline{X} $. Tensor fields on $\pi^{-1}(TM)$ will be called
$\pi$-tensor fields. The fundamental $\pi$-vector field is the
$\pi$-vector field $\overline{\eta}$ defined by
$\overline{\eta}(u)=(u,u)$ for all $u\in \T M$.

We have the following short exact sequence of vector
bundles\vspace{-0.1cm}
$$0\longrightarrow
 \pi^{-1}(TM)\stackrel{\gamma}\longrightarrow T(\T M)\stackrel{\rho}\longrightarrow
\pi^{-1}(TM)\longrightarrow 0 ,\vspace{-0.1cm}$$
 where the bundle morphisms $\rho$ and $\gamma$ are defined respectively by
$\rho := (\pi_{\T M},d\pi)$ and $\gamma (u,v):=j_{u}(v)$, $j_{u}$
being the natural isomorphism $j_{u}:T_{\pi_{M}(u)}M \longrightarrow
T_{u}(T_{\pi_{M}(u)}M)$. The vector $1$-form $J$ on $TM$ defined by
$J:=\gamma\circ\rho$ is called the natural almost tangent structure
of $T M$. The vertical vector field $\mathcal{C}$ on $TM$ defined by
$\mathcal{C}:=\gamma\circ\overline{\eta} $ is called  the canonical
or Liouville vector field.
\par
Let $D$ be  a linear connection (or simply a connection) on the
pullback bundle $\pi^{-1}(TM)$. The map \vspace{-0.1cm}
$$K:T \T M\longrightarrow \pi^{-1}(TM):X\longmapsto D_X \overline{\eta}
\vspace{-0.1cm}$$ is  called the connection map or the deflection
map associated with $D$. A tangent vector $X\in T_u (\T M)$ is
said to be horizontal if $K(X)=0$. The vector space $H_u (\T M)$
of the horizontal vectors
 at $u \in  \T M$ is called the horizontal space of $M$ at $u$.
   The connection $D$ is said to be regular if
\begin{equation}\label{direct sum}
T_u (\T M)=V_u (\T M)\oplus H_u (\T M) \qquad \forall u\in \T M .
\end{equation}
\par If $M$ is endowed with a regular connection $D$, then the  maps
\begin{eqnarray*}
   \gamma &:& \pi^{-1}(T M)  \To V(\T M), \\
    \rho |_{H(\T M)}&:&H(\T M)\To \pi^{-1}(TM),\\
     K |_{V(\T M)}&:&V(\T M) \To \pi^{-1}(T M)
\end{eqnarray*}
are vector bundle isomorphisms.
   Let $\beta:=(\rho |_{H(\T M)})^{-1}$, called the horizontal map associated with $D$, then \vspace{-0.2cm}
   \begin{align}\label{fh1}
    \rho\circ\beta = id_{\pi^{-1} (TM)}, \quad  \quad
       \beta\circ\rho =\left\{
                                \begin{array}{ll}
                                          id_{H(\T M)} & {\,\, on\,\,   H(\T M)} \\
                                         0 & {\,\, on \,\,   V(\T M)}
                                       \end{array}
                                     \right.\vspace{-0.2cm}
\end{align}
\par
 The (classical)  torsion tensor $\textbf{T}$  of the connection
$D$ is given by
$$\textbf{T}(X,Y)=D_X \rho Y-D_Y\rho X -\rho [X,Y] \quad
\forall\,X,Y\in \mathfrak{X} (\T M),$$ from which the horizontal or
(h)h-torsion tensor $Q$ and the mixed or (h)hv-torsion tensor $T$
are defined respectively by \vspace{-0.2cm}
$$Q (\overline{X},\overline{Y})=\textbf{T}(\beta \overline{X}\beta \overline{Y}),
\, \,\, T(\overline{X},\overline{Y})=\textbf{T}(\gamma
\overline{X},\beta \overline{Y}) \quad \forall \,
\overline{X},\overline{Y}\in\mathfrak{X} (\pi (M)).\vspace{-0.2cm}$$
\par
The (classical) curvature tensor  $\textbf{K}$ of the connection $D$
is given by
 $$ \textbf{K}(X,Y)\rho Z=-D_X D_Y \rho Z+D_Y D_X \rho Z+D_{[X,Y]}\rho Z
  \quad \forall\, X,Y, Z \in \mathfrak{X} (\T M),$$
from which the horizontal (h-), mixed (hv-) and vertical (v-)
curvature tensors, denoted by $R$, $P$ and $S$ respectively, are
defined by
$$R(\overline{X},\overline{Y})\overline{Z}=\textbf{K}(\beta
\overline{X}\beta \overline{Y})\overline{Z},\quad
P(\overline{X},\overline{Y})\overline{Z}=\textbf{K}(\beta
\overline{X},\gamma \overline{Y})\overline{Z},\quad
S(\overline{X},\overline{Y})\overline{Z}=\textbf{K}(\gamma
\overline{X},\gamma \overline{Y})\overline{Z}.$$ The contracted
curvature tensors  $\widehat{R}$, $\widehat{P}$ and $\widehat{S}$,
also known as the
 (v)h-, (v)hv- and (v)v-torsion tensors, are defined by
$$\widehat{R}(\overline{X},\overline{Y})={R}(\overline{X},\overline{Y})\overline{\eta},\quad
\widehat{P}(\overline{X},\overline{Y})={P}(\overline{X},\overline{Y})\overline{\eta},\quad
\widehat{S}(\overline{X},\overline{Y})={S}(\overline{X},\overline{Y})\overline{\eta}.$$
\par If $M$ is endowed with a metric $g$ on $\p$, we write
\begin{equation}\label{cur.g}
    R(\overline{X},\overline{Y},\overline{Z}, \overline{W}):
=g(R(\overline{X},\overline{Y})\overline{Z}, \overline{W}),\,
\cdots, \, S(\overline{X},\overline{Y},\overline{Z}, \overline{W}):
=g(S(\overline{X},\overline{Y})\overline{Z}, \overline{W}).
\end{equation}

On a Finsler manifold $(M,L)$, there are {\em{{canonically}}}
associated  four linear connections on $\pi^{-1}(TM)$ \cite{r94}:
the Cartan connection $\nabla$, the Chern (Rund) connection $D^{c}$,
the Hashiguchi connection $ D^{*} $ and   the Berwald connection
$D^{\circ} $. Each  of these  connections is  regular with
(h)hv-torsion $T$ satisfying $T(\o X, \o \eta)=0$.

\begin{defn}  Let $(M,L)$ be a Finsler
manifold and  $g$ the Finsler metric defined by $L$. Let $T$ be the
(h)hv-torsion tensor and  $S$, $P$, $R$ are  the $v$-, $hv$- and
$h$-curvature tensors associated with the Cartan connection
$\nabla$. We define\vspace{-0.2cm}
\begin{eqnarray*}
\ell(\overline{X})&:=&L^{-1}g(\overline{X},\overline{\eta}),\\
\hbar&:=& g-\ell \otimes \ell:\text{the  angular metric tensor},\\
T(\overline{X},\overline{Y},\overline{Z})&:
=&g(T(\overline{X},\overline{Y}),\overline{Z}):\text{the  Cartan
 tensor},\\
    C(\overline{X})&:=& Tr\{\overline{Y} \longmapsto
T(\overline{X},\overline{Y})\}:\text{the  contracted torsion},\\
 g(\overline{C}, \overline{X})&:=&C(\overline{X}):\text{$\overline{C}$
 is the $\pi$-vector field associated with the $\pi$-form $C$, by duality}, \\
 Ric^v(\overline{X},\overline{Y})&:=& Tr\{
\overline{Z} \longmapsto S(\overline{X},\overline{Z})\overline{Y}\}:\text{the  vertical Ricci tensor},\\
Ric^h(\overline{X},\overline{Y})&:=& Tr\{ \overline{Z} \longmapsto
R(\overline{X},\overline{Z})\overline{Y}\}:\text{the horizontal Ricci tensor},\\
g(Ric_0^v(\overline{X}),\overline{Y})&:=&Ric^v(\overline{X},\overline{Y}):\text{the vertical Ricci  map} \  Ric_0^v,\\
g(Ric_0^h(\overline{X}),\overline{Y})&:=&Ric^h(\overline{X},\overline{Y}):\text{the horizontal Ricci  map} \ Ric_0^h,\\
 Sc^v&:=&\text {Tr}\{ \o X \longmapsto Ric_0^v\o X\}:\text{the vertical scalar curvature}, \\
 Sc^h&:=& \text {Tr}\{ \o X \longmapsto Ric_0^h(\o X)\}:\text{the horizontal scalar curvatures}.
\end{eqnarray*}
\end{defn}

The following two  results \cite{r94} give an explicit expression
for each of  the  Berwald, Chern and Hashiguchi connections  in
terms of the Cartan connection $\nabla$. \vspace{-0.2cm}
\begin{thm} \label{th.r5}The Chern connection $D^{c}$ is
 given, in terms of  Cartan connection,~by
   $$D^{c}_{X}\overline{Y} = \nabla _{X}\overline{Y}
- T(KX,\overline{Y})= D^{\circ} _{X}\overline{Y} -{\widehat{P}}(\rho
X, \overline{Y}). \vspace{-0.2cm}$$ In particular, we have
\begin{description}
  \item[(a)] $D^{c}_{\gamma \overline{X}}\overline{Y}=\nabla _{\gamma
  \overline{X}}\overline{Y}-T(\overline{X},\overline{Y})=D^{\circ} _{\gamma
  \overline{X}}\overline{Y}$.

 \item[(b)] $D^{c}_{\beta \overline{X}}\overline{Y}=\nabla _{\beta
  \overline{X}}\overline{Y}=D^{\circ} _{\beta
  \overline{X}}\overline{Y}-{\widehat{P}}(\overline{X}, \overline{Y}).$
\end{description}
\end{thm}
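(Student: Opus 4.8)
The plan is to use that the difference of two linear connections on $\pi^{-1}(TM)$ is an $\mathfrak{F}(TM)$-bilinear $\pi$-tensor, and to pin that tensor down by splitting its $T\T M$-argument into horizontal and vertical parts. The Cartan, Chern and Berwald connections are all built over the canonical nonlinear connection, so they share $\gamma,\rho,\beta$ and satisfy $K\circ\beta=0$, $K\circ\gamma=\mathrm{id}$; thus every $X\in\mathfrak{X}(\T M)$ splits as $X=\beta\rho X+\gamma KX$, and a tensorial identity in $X$ need only be checked on a horizontal argument $\beta\overline Z$ and a vertical argument $\gamma\overline Z$. On $\beta\overline Z$ the term $T(KX,\overline Y)$ dies because $K\beta=0$, and on $\gamma\overline Z$ the term $\widehat{P}(\rho X,\overline Y)$ dies because $\rho\gamma=0$; hence the two global formulas reduce exactly to the special cases (a) and (b), and it suffices to prove those.

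For the vertical case (a) I would read off the (h)hv-torsions from their definition. For any of the connections, $\textbf{T}^{D}(\gamma\overline X,\beta\overline Y)=D_{\gamma\overline X}\overline Y-\rho[\gamma\overline X,\beta\overline Y]$ (using $\rho\beta=\mathrm{id}$, $\rho\gamma=0$); since the Cartan (h)hv-torsion is $T(\overline X,\overline Y)=\nabla_{\gamma\overline X}\overline Y-\rho[\gamma\overline X,\beta\overline Y]$ while the Chern and Berwald connections both have vanishing (h)hv-torsion, subtracting gives at once $D^c_{\gamma\overline X}\overline Y=\nabla_{\gamma\overline X}\overline Y-T(\overline X,\overline Y)$ and $D^c_{\gamma\overline X}\overline Y=D^{\circ}_{\gamma\overline X}\overline Y$, which is (a). For the first equality in (b), $D^c_{\beta\overline X}\overline Y=\nabla_{\beta\overline X}\overline Y$, I would set $B(\overline X,\overline Y):=\nabla_{\beta\overline X}\overline Y-D^c_{\beta\overline X}\overline Y$ and use that both connections are h-metric and have vanishing (h)h-torsion $Q$: h-metricity makes $g(B(\overline X,\overline Y),\overline Z)$ skew in $(\overline Y,\overline Z)$, while $Q=0$ makes it symmetric in $(\overline X,\overline Y)$, and the familiar cyclic-permutation argument then forces $B\equiv 0$.

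The second equality in (b), $D^c_{\beta\overline X}\overline Y=D^{\circ}_{\beta\overline X}\overline Y-\widehat{P}(\overline X,\overline Y)$, carries the real difficulty and is the main obstacle, since here Chern and Berwald share both $Q=0$ and a vanishing (h)hv-torsion yet differ on horizontal arguments. Writing $E(\overline X,\overline Y):=D^{\circ}_{\beta\overline X}\overline Y-D^c_{\beta\overline X}\overline Y$, the condition $Q=0$ for both forces $E$ to be symmetric, so its whole content sits in the failure of the Berwald connection to be h-metric; combined with $D^c_{\beta}=\nabla_{\beta}$ this means $(D^{\circ}_{\beta\overline X}g)(\overline Y,\overline Z)$ is governed by $E$. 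I would then identify this metric defect with the Cartan hv-curvature by unwinding $\widehat{P}(\overline X,\overline Y)=\textbf{K}(\beta\overline X,\gamma\overline Y)\overline{\eta}$ and applying the deflection relations $\nabla_{\gamma\overline Y}\overline{\eta}=\overline Y$ and $T(\overline X,\overline{\eta})=0$, which collapse the mixed second derivatives and express $\widehat{P}$ through $\nabla_{\beta}$ applied to the Cartan tensor; matching the two expressions gives $E=\widehat{P}$. Granting instead the companion formula of \cite{r94} expressing $D^{\circ}$ through $\nabla$, this step is immediate. Reassembling (a) and (b) via $X=\beta\rho X+\gamma KX$ and the tensoriality of $D^c-\nabla$ and $D^{\circ}-D^c$ then recovers both global identities.
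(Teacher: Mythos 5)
First, a point of context: the paper does not prove this theorem at all --- it is quoted verbatim from \cite{r94} (``The following two results \cite{r94} give an explicit expression\ldots''), so there is no internal proof to compare yours against; your proposal must stand on its own. Much of it does. The reduction is sound: all four canonical connections share the Barthel nonlinear connection, hence the same $\rho,\gamma,\beta,K$, so the decomposition $X=\beta\rho X+\gamma KX$ and the tensoriality of $D^{c}-\nabla$ and $D^{c}-D^{\circ}$ legitimately reduce the global formulas to (a) and (b). Part (a) is correct, granted the standard axiomatic characterizations: the (h)hv-torsion of a connection $D$ sharing these structure maps is $D_{\gamma \overline{X}}\overline{Y}-\rho[\gamma\overline{X},\beta\overline{Y}]$, and since this torsion is $T$ for Cartan and $0$ for Chern and Berwald, subtraction gives (a). The first equality of (b) via the Christoffel trick (h-metricity of both $\nabla$ and $D^{c}$ gives skewness in $(\overline{Y},\overline{Z})$, vanishing (h)h-torsion of both gives symmetry in $(\overline{X},\overline{Y})$, hence the difference tensor vanishes) is also correct and complete.

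The gap is exactly where you flag it: the second equality of (b) is never actually established. Unwinding $\widehat{P}(\overline{X},\overline{Y})=\mathbf{K}(\beta\overline{X},\gamma\overline{Y})\overline{\eta}$ with the deflection relations $\nabla_{\gamma\overline{Y}}\overline{\eta}=\overline{Y}$ and $\nabla_{\beta\overline{X}}\overline{\eta}=0$ yields $\widehat{P}(\overline{X},\overline{Y})=-\nabla_{\beta\overline{X}}\overline{Y}+\nabla_{[\beta\overline{X},\gamma\overline{Y}]}\overline{\eta}$, i.e.\ an expression involving $\nabla_{\beta}$ and a bracket-deflection term --- \emph{not}, as you assert, ``$\nabla_{\beta}$ applied to the Cartan tensor.'' To close the argument you must identify $\nabla_{[\beta\overline{X},\gamma\overline{Y}]}\overline{\eta}=\gamma^{-1}v[\beta\overline{X},\gamma\overline{Y}]$ with $D^{\circ}_{\beta\overline{X}}\overline{Y}$; that is either the intrinsic definition of the Berwald horizontal derivative (in which case you should invoke it explicitly, the proof of (b) is then immediate, and your metric-defect detour is superfluous), or it must be derived from whatever axioms you take for $D^{\circ}$, which you never state. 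Your alternative route through the metric defect needs two further unproved identities --- $\widehat{P}=\nabla_{\beta\overline{\eta}}T$ (this is the nontrivial fact the paper cites from \cite{r96}) and the formula $(D^{\circ}_{\beta\overline{X}}g)(\overline{Y},\overline{Z})=-2g((\nabla_{\beta\overline{\eta}}T)(\overline{X},\overline{Y}),\overline{Z})$ together with its total symmetry --- so the phrase ``matching the two expressions'' conceals precisely the content of the theorem. Finally, the fallback of ``granting the companion formula of \cite{r94}'' is not a proof: that formula (Theorem 2 of the paper, for the Hashiguchi/Berwald connections) is a statement of exactly the same kind, imported from exactly the same source, so assuming it begs the question.
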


\begin{thm}\label{th.t12} The Hashiguchi  connection $ D^{*} $  is
given, in terms of Cartan connection,~by \vspace{-0.2cm}
  \begin{eqnarray*}
   D^{*}_{X}\overline{Y} = \nabla _{X}\overline{Y}
+ {\widehat{P}}(\rho X,\overline{Y}) =D^{\circ}_{X}\overline{Y}
+{T}(K X, \overline{Y}). \vspace{-0.2cm}
  \end{eqnarray*}
In particular, we have
\begin{description}
  \item[(a)] $ D^{*}_{\gamma \overline{X}}\overline{Y}=\nabla _{\gamma
  \overline{X}}\overline{Y}=D^{\circ}_{\gamma \overline{X}}\overline{Y}
+{T}( \overline{X}, \overline{Y})$.

 \item[(b)] $ D^{*}_{\beta \overline{X}}\overline{Y}=\nabla _{\beta
  \overline{X}}\overline{Y}+\widehat{P}(\overline{X},\overline{Y})
  =D^{\circ}_{\beta \overline{X}}\overline{Y}.$
\end{description}
\end{thm}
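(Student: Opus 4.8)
\prof The plan is to reduce the global identity to the two componentwise statements (a) and (b), and then to reassemble it via the horizontal--vertical splitting. Since $D^{*}$ is regular and shares its horizontal map $\beta$ with $\nabla$ and $D^{\circ}$, every $X\in\mathfrak{X}(\T M)$ splits, by \eqref{direct sum} and \eqref{fh1}, as $X=\beta\rho X+\gamma K X$, with $\beta\rho$ the horizontal projector and $\gamma K$ the vertical projector. The $\mathfrak{F}(\T M)$-linearity of $D^{*}$ in its first slot then gives $D^{*}_{X}\o Y=D^{*}_{\beta\rho X}\o Y+D^{*}_{\gamma K X}\o Y$, so the whole statement follows once (a) and (b) are in hand.

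First I would settle (a) and (b). The feature that singles out the Hashiguchi connection among the four canonical connections of \cite{r94} is that its vertical covariant derivative agrees with that of the Cartan connection while its horizontal covariant derivative agrees with that of the Berwald connection; in the present notation these are exactly the middle equalities $D^{*}_{\gamma\o X}\o Y=\nabla_{\gamma\o X}\o Y$ and $D^{*}_{\beta\o X}\o Y=D^{\circ}_{\beta\o X}\o Y$. To upgrade each to the two-sided identity claimed, I would insert the Cartan--Berwald relations already encoded in Theorem \ref{th.r5}: its parts (a) and (b), read backwards, say $\nabla_{\gamma\o X}\o Y=D^{\circ}_{\gamma\o X}\o Y+T(\o X,\o Y)$ and $\nabla_{\beta\o X}\o Y=D^{\circ}_{\beta\o X}\o Y-\widehat{P}(\o X,\o Y)$. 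Substituting these yields precisely the vertical line (a) and the horizontal line (b).

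With (a) and (b) in place the assembly is routine. Because $\rho\circ\gamma=0$ and $K\circ\beta=0$, one has $\rho X=\rho(\beta\rho X)$ and $KX=K(\gamma K X)$, so each correction term attaches to the correct component: applying (b) to $\beta\rho X$ and (a) to $\gamma K X$ gives $D^{*}_{X}\o Y=\nabla_{\beta\rho X}\o Y+\widehat{P}(\rho X,\o Y)+\nabla_{\gamma K X}\o Y=\nabla_{X}\o Y+\widehat{P}(\rho X,\o Y)$, while the Berwald form of (a),(b) gives $D^{*}_{X}\o Y=D^{\circ}_{\beta\rho X}\o Y+D^{\circ}_{\gamma K X}\o Y+T(KX,\o Y)=D^{\circ}_{X}\o Y+T(KX,\o Y)$. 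That the two right-hand sides coincide is not an extra hypothesis: it is the single relation $\nabla=D^{\circ}+T(K\,\cdot\,,\,\cdot\,)-\widehat{P}(\rho\,\cdot\,,\,\cdot\,)$, which is itself Theorem \ref{th.r5} with its two expressions for $D^{c}$ equated.

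The step that carries the real weight is not this reassembly but the justification of the middle equalities, i.e.\ tying the abstractly specified $D^{*}$ to its vertical-Cartan and horizontal-Berwald behaviour. This rests on the defining axioms of the Hashiguchi connection in \cite{r94}; the efficient route is to check that the connection given by the proposed formula meets those axioms and then appeal to uniqueness. Here the duality with Theorem \ref{th.r5}, in which the roles of $T$ and $\widehat{P}$ and of the vertical and horizontal directions are interchanged, makes the verification a mirror image of the one already carried out for the Chern connection $D^{c}$.
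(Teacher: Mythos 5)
First, a point of comparison that cannot be made: the paper contains no proof of Theorem \ref{th.t12}. It is quoted, alongside Theorem \ref{th.r5}, as a known result from \cite{r94}, so your argument can only be measured against what such a proof must contain, not against an in-paper one. Measured that way, the scaffolding you build is correct: since all four canonical connections are regular and induce the same Barthel nonlinear connection, every $X$ splits as $X=\beta\rho X+\gamma KX$; $D^{*}$ is additive in its lower slot; Theorem \ref{th.r5}, read backwards, does give $\nabla_{\gamma\overline{X}}\overline{Y}=D^{\circ}_{\gamma\overline{X}}\overline{Y}+T(\overline{X},\overline{Y})$ and $\nabla_{\beta\overline{X}}\overline{Y}=D^{\circ}_{\beta\overline{X}}\overline{Y}-\widehat{P}(\overline{X},\overline{Y})$; and these pieces reassemble exactly as you say, the consistency of the two displayed expressions for $D^{*}$ being again just Theorem \ref{th.r5}.

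The gap is the step you yourself flag as carrying ``the real weight'' and then leave undone. Granting Theorem \ref{th.r5} and the splitting, the middle equalities $D^{*}_{\gamma\overline{X}}\overline{Y}=\nabla_{\gamma\overline{X}}\overline{Y}$ and $D^{*}_{\beta\overline{X}}\overline{Y}=D^{\circ}_{\beta\overline{X}}\overline{Y}$ are \emph{equivalent} to the theorem itself, so taking them as ``the defining feature'' of the Hashiguchi connection is legitimate only if that is literally its definition. In the framework in play it is not: this paper never defines $D^{*}$ except through the very formula being proved, and in \cite{r94}, the cited source, the four canonical connections are obtained by intrinsic existence-and-uniqueness theorems from metricity and torsion axioms, after which formulas such as the present one are derived by checking that the explicit expression $\nabla_{X}\overline{Y}+\widehat{P}(\rho X,\overline{Y})$ satisfies those axioms. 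That check (vertical metricity, the prescribed torsions, regularity, followed by the appeal to uniqueness) is the entire mathematical content of the theorem; your proposal replaces it with the remark that it ``is a mirror image'' of a verification already carried out for the Chern connection --- a verification which likewise appears nowhere, since Theorem \ref{th.r5} is also quoted without proof. As written, then, the argument either assumes a disguised restatement of the conclusion or defers its only substantive step to a computation that is never performed.
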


\par Now, we give some concepts and results concerning the Klein-Grifone approach
to intrinsic Finsler geometry. For more details, we refer to
\cite{r21}, \cite{r22} and  \cite{r27}.

\begin{prop}\label{ch.6pp} Let $(M,L)$ be a Finsler manifold. The vector field
$G$ on $TM$ defined by $i_{G}\,\Omega =-dE$ is a spray, where
 $E:=\frac{1}{2}L^{2}$ is the energy function and $\Omega:=dd_{J}E$.
 Such a spray is called the canonical spray.
 \end{prop}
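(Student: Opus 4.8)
The plan is to verify that the vector field $G$ defined by the equation $i_{G}\,\Omega = -dE$ is well-defined (i.e.\ exists and is unique) and that it is a spray. I would first need to unpack the definitions: a spray on $TM$ is a vector field $G$ satisfying two conditions, namely $JG = \mathcal{C}$ (so that $G$ is a second-order differential equation field, equivalently $\rho \circ \rho^{-1}$ projects correctly via the almost tangent structure $J$) and the homogeneity condition $[\mathcal{C}, G] = G$, where $\mathcal{C} = \gamma \circ \overline{\eta}$ is the Liouville field. So the proof naturally splits into three tasks: existence and uniqueness of $G$, the semispray condition $JG = \mathcal{C}$, and the homogeneity condition.

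For existence and uniqueness, the key structural fact is that $\Omega := dd_{J}E$ is a nondegenerate $2$-form on $T\T M$ for a (nondegenerate) Finsler energy $E$; this is precisely where the Finsler metric's nondegeneracy enters. Granting nondegeneracy of $\Omega$, the map $X \mapsto i_{X}\Omega$ is a fiberwise isomorphism from $T\T M$ to its dual, so the equation $i_{G}\Omega = -dE$ determines $G$ uniquely. I would state nondegeneracy of $\Omega$ as the fundamental input, since it is the structural analogue of a symplectic form and $G$ is then its Hamiltonian vector field for the Hamiltonian $E$; in this sense the canonical spray is just the geodesic flow generator written symplectically.

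Next I would check the two spray conditions by exploiting the algebraic identities satisfied by $J$, $\mathcal{C}$, and the derivations $i_{J}$, $d_{J} = [i_{J}, d]$. The semispray condition follows from applying $i_{J}$ (or the contraction with $J$) to the defining equation and using $i_{J}\,\Omega$-type identities together with $i_{J}dE = d_{J}E$ and the fact that $d_{J}E$ recovers the Liouville structure; concretely one shows $JG = \mathcal{C}$ by pairing $i_{G}\Omega = -dE$ against $J$ and invoking the relation $i_{\mathcal{C}}\Omega = -d_{J}E = J^{*}(dE)$, which characterizes $\mathcal{C}$. The homogeneity $[\mathcal{C}, G] = G$ would follow from the homogeneity of $E$ (Euler's relation $\mathcal{C}(E) = 2E$, equivalently $L_{\mathcal{C}}E = 2E$) by taking the Lie derivative $L_{\mathcal{C}}$ of the defining equation and using $L_{\mathcal{C}}\Omega = \Omega$ (since $\Omega = dd_{J}E$ and $L_{\mathcal{C}}$ acts by a fixed weight on the relevant derivations), so that $i_{[\mathcal{C},G]}\Omega = L_{\mathcal{C}}(i_{G}\Omega) - i_{G}L_{\mathcal{C}}\Omega$ collapses to $i_{G}\Omega$, whence $[\mathcal{C},G] = G$ by nondegeneracy.

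The main obstacle I anticipate is establishing the nondegeneracy of $\Omega$ and correctly tracking the commutator identities among the graded derivations $i_{J}$, $d$, $d_{J}$, and $L_{\mathcal{C}}$; these Fr\"olicher--Nijenhuis bracket computations are where sign and weight errors creep in. Everything else is formal once one accepts that $(\Omega, E)$ behaves like a symplectic--Hamiltonian pair and that $E$ is homogeneous of degree two. I would therefore concentrate the real work on the derivation calculus and simply cite the standard Klein--Grifone framework (\cite{r21}, \cite{r22}, \cite{r27}) for the nondegeneracy of $\Omega$ and the basic identities, reducing the proposition to a short verification of the two defining properties of a spray.
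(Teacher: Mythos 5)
You should note first that the paper does not prove this proposition at all: it appears in the preliminaries (\S 1) as a quoted result of the Klein--Grifone theory, with the reader referred to \cite{r21}, \cite{r22} and \cite{r27}. So the comparison is with the classical proof in those references --- which is exactly the proof you outline: regularity of $E$ makes $\Omega=dd_{J}E$ nondegenerate, whence $G$ exists and is unique as the Hamiltonian vector field of $E$; the semispray condition $JG=\mathcal{C}$ and the homogeneity $[\mathcal{C},G]=G$ then follow from the derivation calculus of $J$, $d_{J}$ and $L_{\mathcal{C}}$, together with Euler's relation $\mathcal{C}E=2E$ and nondegeneracy. Your decomposition into existence/uniqueness, semispray property, and homogeneity is the right one, and your homogeneity computation ($L_{\mathcal{C}}\Omega=\Omega$, $L_{\mathcal{C}}dE=2dE$, hence $i_{[\mathcal{C},G]}\Omega=-2dE+dE=i_{G}\Omega$, and conclude by nondegeneracy) is correct.

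Two slips are worth fixing. (i) With the paper's convention $\Omega:=dd_{J}E$, the identity you invoke in the semispray step is wrong as written: one has $i_{\mathcal{C}}\Omega=+\,d_{J}E=J^{*}(dE)$, not $-d_{J}E$; indeed your own equation $i_{\mathcal{C}}\Omega=-d_{J}E=J^{*}(dE)$ is internally inconsistent, since $J^{*}(dE)$ \emph{is} $d_{J}E$ by definition. The correct chain is $\Omega(JG,Y)=-\Omega(G,JY)=dE(JY)=d_{J}E(Y)=\Omega(\mathcal{C},Y)$, using the skew-compatibility $\Omega(JX,Y)=-\Omega(X,JY)$ (a consequence of $J^{2}=0$ and of the vanishing of the Nijenhuis torsion of $J$) and then nondegeneracy to get $JG=\mathcal{C}$; this is precisely the kind of sign slip you yourself flagged as the main hazard. (ii) Nondegeneracy of $\Omega$, and hence the definition of $G$ as a smooth Hamiltonian field, holds only on $\T M$ (the bundle of nonzero vectors), since the Finsler structure is not smooth and regular across the zero section; so your claim that $X\mapsto i_{X}\Omega$ is a fiberwise isomorphism on all of $T\T M$ should be restricted accordingly, the spray being $C^{\infty}$ only on $\T M$. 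Neither point derails the argument: with these repairs your proposal is a complete and faithful reconstruction of the proof the paper cites.
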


A nonlinear connection on $M$ is a vector $1$-form $\Gamma$ on $TM$,
$C^{\infty}$ on $\T M$, such that $J \Gamma=J, \,\, \Gamma J=-J .$
The horizontal and vertical projectors
 associated with $\Gamma$ are
defined by
   $h:=\frac{1}{2} (I+\Gamma)$ and $v:=\frac{1}{2}
 (I-\Gamma)$ respectively.   The torsion and  curvature of
$\Gamma$ are defined by $t:=\frac{1}{2}[J,\Gamma]$ and
 $\mathfrak{R}:=-\frac{1}{2}[h,h]$  respectively. A nonlinear
 connection $\Gamma$ is homogenous if $[\mathcal{C}, \Gamma]=0$. It  is conservative
 if $d_{h}E=0$.

\begin{thm}  \label{c.ba.} On a Finsler manifold $(M,L)$, there exists a unique
conservative homogenous nonlinear  connection  with zero torsion. It
is given by\,{\em:} \vspace{-0.2cm} $$\Gamma =
[J,G],\vspace{-0.2cm}$$ where $G$ is the canonical spray.
 Such a nonlinear connection is called the canonical connection,
 the Cartan nonlinear connection or the Barthel connection associated with $(M,L)$.
\end{thm}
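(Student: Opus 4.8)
The plan is to split the statement into existence and uniqueness and to reduce everything to formal Fr\"olicher--Nijenhuis calculus together with the two structural facts carried by the canonical spray, namely $JG=\mathcal{C}$ and $i_{G}\,\Omega=-dE$ (Proposition~\ref{ch.6pp}). For existence I would set $\Gamma:=[J,G]$ and verify in turn that it is a nonlinear connection, that it is torsion free, that it is homogeneous, and finally that it is conservative.

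Writing $[J,G]=-\mathcal{L}_{G}J$, the two defining relations $J\Gamma=J$ and $\Gamma J=-J$ follow from the semispray identity $JG=\mathcal{C}$ and $J^{2}=0$ by a direct (coordinate or invariant) computation: one checks $J[G,JX]=-JX$, which yields both relations at once. For the torsion, $t=\tfrac12[J,\Gamma]=\tfrac12[J,[J,G]]$, and the graded Jacobi identity for the Fr\"olicher--Nijenhuis bracket gives $[J,[J,G]]=\tfrac12[[J,J],G]$; since the natural almost tangent structure $J$ is integrable, $[J,J]=0$, so $t=0$. Homogeneity is again Jacobi: using $[\mathcal{C},J]=-J$ and the spray homogeneity $[\mathcal{C},G]=G$,
$$[\mathcal{C},\Gamma]=[[\mathcal{C},J],G]+[J,[\mathcal{C},G]]=[-J,G]+[J,G]=0 .$$

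The substantive part of existence is conservativeness, $d_{h}E=0$. I would first record $d_{h}E=i_{h}\,dE=\tfrac12\big(dE+i_{\Gamma}\,dE\big)$, so the claim is equivalent to $i_{\Gamma}\,dE=-dE$. To evaluate $i_{\Gamma}\,dE=-i_{\mathcal{L}_{G}J}\,dE$ I would use the operator identity $i_{\mathcal{L}_{G}J}=\mathcal{L}_{G}\,i_{J}-i_{J}\,\mathcal{L}_{G}$ on forms, which gives $i_{\mathcal{L}_{G}J}\,dE=\mathcal{L}_{G}(d_{J}E)-d_{J}(GE)$. Now $\mathcal{L}_{G}(d_{J}E)=i_{G}\,\Omega+d(i_{G}\,d_{J}E)$ with $i_{G}\,d_{J}E=dE(JG)=dE(\mathcal{C})=2E$ by the degree-two homogeneity of $E$, while the spray equation $i_{G}\,\Omega=-dE$ together with the antisymmetry of $\Omega$ forces $GE=dE(G)=-\Omega(G,G)=0$, hence $d_{J}(GE)=0$. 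Assembling these, $i_{\mathcal{L}_{G}J}\,dE=(-dE+2\,dE)=dE$, so $i_{\Gamma}\,dE=-dE$ and $d_{h}E=0$. I expect this chain of Fr\"olicher--Nijenhuis manipulations to be the main computational obstacle, since it is exactly where the specific defining equation of $G$ must enter.

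For uniqueness, let $\Gamma'$ be any conservative, homogeneous, torsion-free nonlinear connection, with horizontal projector $h'$, and attach to it the field $G':=h'S$ for a semispray $S$. As two semisprays differ by a vertical field and $h'$ annihilates verticals, $G'$ is well defined; from $Jh'=J$ one gets $JG'=\mathcal{C}$, and homogeneity of $\Gamma'$ makes $G'$ a spray. The key lemma is that for a torsion-free homogeneous connection the connection is recovered from its spray, $\Gamma'=[J,G']$ --- this is precisely the place where the vanishing of the torsion is indispensable, since in general $[J,h'S]$ differs from $\Gamma'$ by the torsion. Finally, conservativeness gives $G'E=dE(h'S)=(d_{h'}E)(S)=0$, so the reverse of the computation above collapses to $i_{G'}\,\Omega=-dE$; since $\Omega=dd_{J}E$ is nondegenerate (strong convexity of $L$), the spray equation has a unique solution, forcing $G'=G$ and hence $\Gamma'=[J,G']=[J,G]=\Gamma$.
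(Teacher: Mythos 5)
You should first be aware that the paper contains no proof of this theorem to compare against: it is stated in the preliminaries as a known result of the Klein--Grifone approach, quoted from Grifone \cite{r21}, \cite{r22} (see also \cite{r27}), just as Proposition~\ref{ch.6pp} is. What you have written is in substance a correct reconstruction of Grifone's classical argument, and all the Fr\"olicher--Nijenhuis steps check out. For existence: $\Gamma=[J,G]=-\mathcal{L}_G J$ satisfies $J\Gamma=J$ and $\Gamma J=-J$ precisely because $J[G,JX]=-JX$ for any semispray; the graded Jacobi identity indeed gives $[J,[J,G]]=\tfrac12[[J,J],G]$, and $[J,J]=0$ for the canonical almost tangent structure, so $t=0$; homogeneity follows from the derivation property of $\mathcal{L}_{\mathcal{C}}$ together with $[\mathcal{C},J]=-J$ and $[\mathcal{C},G]=G$. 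Your conservativeness computation is the crux and is right, with the two essential inputs $i_G d_JE=dE(\mathcal{C})=2E$ (Euler) and $GE=-\Omega(G,G)=0$ (antisymmetry plus the defining equation $i_G\Omega=-dE$), yielding $i_\Gamma dE=-dE$, i.e.\ $d_hE=0$. The uniqueness scheme --- attach $G'=h'S$ (well defined since $h'$ kills verticals, a semispray since $Jh'=J$, a spray by $[\mathcal{C},\Gamma']=0$), recover $\Gamma'=[J,G']$, then run the conservativeness computation backwards to get $i_{G'}\Omega=-dE$ and invoke nondegeneracy of $\Omega$ --- is also the standard one, and the nondegeneracy you invoke is exactly what makes Proposition~\ref{ch.6pp} well posed, so you assume nothing beyond the paper's framework.

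The one point you assert rather than prove is the key lemma that a homogeneous, torsion-free connection is recovered from its associated spray, $\Gamma'=[J,G']$. Since this carries the entire weight of the uniqueness half, it deserves an argument; fortunately it is short. If $N^i_j$ are the coefficients of $\Gamma'$, then $G'=h'S=y^j(\partial_{x^j}-N^i_j\partial_{y^i})$, and the connection $[J,G']$ has coefficients $\tfrac12\,(N^i_j+y^k\,\partial_{y^j}N^i_k)$; zero torsion gives the symmetry $\partial_{y^j}N^i_k=\partial_{y^k}N^i_j$, and homogeneity gives, by Euler's relation, $y^k\,\partial_{y^k}N^i_j=N^i_j$, whence the two sets of coefficients coincide. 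Note that homogeneity of $\Gamma'$ is needed here every bit as much as the vanishing of the torsion, which you correctly build into the hypothesis even though your parenthetical remark singles out only the torsion. With that lemma written out, your proposal is complete.
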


It can be proved \cite{r94} that the nonlinear connection
associated with each of the four canonical linear connections
coincide with the Barthel connection.

\bigskip

 We terminate this section by the following fact. Under an  arbitrary change $L \To \widetilde{L}$ of Finsler
structures on $M$, let  the corresponding Cartan connections
$\nabla$ and $\widetilde{\nabla} $ be related by
$$\widetilde{\nabla} _{X} \overline{Y} =\nabla _{X}
\overline{Y}+\omega (X,\overline{Y}).$$ If we denote

\begin{equation}\label{ABN}
\begin{array}{rcl}
A(\overline{X},\overline{Y})&:=&\omega(\gamma
\overline{X},\overline{Y}),\ \
B(\overline{X},\overline{Y}):=\omega(\beta
\overline{X},\overline{Y}),\ \\
N(\overline{X})&:=&B(\overline{X},\overline{\eta}),\qquad\qquad
N_{o}:= N (\overline{\eta}),
\end{array}
\end{equation}
then we have

\begin{prop}\emph{\cite{r62}}\label{pp.1} Under an  arbitrary change $L \To \widetilde{L}$
of Finsler structures on $M$,
 the corresponding Barthel connections $\Gamma$ and $\widetilde{\Gamma}
  $  are related by \vspace{-0.1cm}
  \begin{equation*}
    \widetilde{\Gamma} = \Gamma -2 \L,\, \text{with}\ \ \L:= \gamma o N o \rho. \vspace{-0.1cm}
\end{equation*}
  Moreover, we have\ \ $ \widetilde{h} =h - \L ,\ \  \widetilde{v} =v + \L .$
\end{prop}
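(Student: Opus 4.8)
The plan is to start from the defining formula for the Barthel connection in Theorem~\ref{c.ba.}, namely $\Gamma=[J,G]$, and track how each ingredient transforms under the change $L\To\widetilde{L}$. The key observation is that the natural almost tangent structure $J=\gamma\circ\rho$ is a \emph{universal} object on $TM$: it depends only on the differentiable structure of $M$, not on the Finsler metric. Hence $\widetilde{J}=J$, and the entire change is carried by the spray. So first I would compute $\widetilde{G}-G$ in terms of the data $\omega$ relating $\widetilde{\nabla}$ and $\nabla$; since the canonical spray is characterized by $G=\beta\overline{\eta}$ (the horizontal lift of $\overline{\eta}$, equivalently $D_G\overline{\eta}=0$ and $\rho G=\overline{\eta}$), I would express $\widetilde{G}=\widetilde{\beta}\,\overline{\eta}$ and relate the two horizontal maps.

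Next I would exploit the projector decomposition. Because $\Gamma=2h-I=I-2v$, proving $\widetilde{\Gamma}=\Gamma-2\L$ is equivalent to proving $\widetilde{h}=h-\L$ (and then $\widetilde{v}=v+\L$ follows immediately since $h+v=I$). So the real content is to identify the difference of horizontal projectors with $\L:=\gamma\circ N\circ\rho$. I would do this by acting on an arbitrary $X\in\mathfrak{X}(\T M)$ and using the splitting $X=\beta\rho X+\gamma KX$ coming from \eqref{direct sum}. On the vertical part $\gamma KX$ the horizontal projector vanishes for both structures, and since $\rho\circ\gamma=0$ one checks $\L(\gamma KX)=0$, so both sides agree there. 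The substance is on the horizontal part $\beta\rho X$, where I must show $(h-\widetilde{h})(\beta\overline{Z})=\L(\beta\overline{Z})=\gamma N(\overline{Z})$ for $\overline{Z}=\rho X$, using $\rho\beta=\mathrm{id}$ from \eqref{fh1}.

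To pin down that horizontal part I would translate between the connection map $K$ and the deflection. Since $\widetilde{\nabla}_X\overline{\eta}=\nabla_X\overline{\eta}+\omega(X,\overline{\eta})$, the two connection maps satisfy $\widetilde{K}X=KX+\omega(X,\overline{\eta})$. Feeding in $X=\beta\overline{Z}$ gives $\widetilde{K}(\beta\overline{Z})=K(\beta\overline{Z})+\omega(\beta\overline{Z},\overline{\eta})=0+B(\overline{Z},\overline{\eta})=N(\overline{Z})$, using $K\beta=0$ (the old horizontal vectors are $K$-null) together with the definitions in \eqref{ABN}. Thus the old horizontal field $\beta\overline{Z}$ acquires, under the new structure, a vertical component $\gamma\widetilde{K}(\beta\overline{Z})=\gamma N(\overline{Z})=\L(\beta\overline{Z})$. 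Writing $\beta\overline{Z}=\widetilde{\beta}\overline{Z}+\gamma\widetilde{K}(\beta\overline{Z})$ and applying $h$ and $\widetilde{h}$ then yields $h(\beta\overline{Z})-\widetilde{h}(\beta\overline{Z})=\gamma N(\overline{Z})$, which is exactly $\L$ on horizontal arguments.

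The main obstacle I anticipate is bookkeeping with the two horizontal maps $\beta$ and $\widetilde{\beta}$ simultaneously, making sure every application of $\rho$, $\gamma$, $K$ lands in the correct summand of \eqref{direct sum} and that homogeneity of $\omega$ (so that $A$, $B$, $N$ are well defined $\pi$-tensors) is legitimately invoked. Once the projector identity $\widetilde{h}=h-\L$ is secured on both summands, the spray and torsion statements are formal: $\widetilde{\Gamma}=2\widetilde{h}-I=2(h-\L)-I=\Gamma-2\L$, and the zero-torsion, conservative, homogeneous characterization of Theorem~\ref{c.ba.} guarantees $\widetilde{\Gamma}$ is indeed the Barthel connection of $\widetilde{L}$, so no separate verification of those properties is needed.
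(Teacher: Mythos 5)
There is nothing in this paper to compare your argument against: Proposition \ref{pp.1} is stated with the citation \cite{r62} and is imported from that earlier paper without proof, the closest in-paper material being Lemma \ref{lem.c1} (the conformal specialization, with the explicit form of the deformation) and the relations for $\widetilde{\beta}$ and $\widetilde{K}$ recorded there. Judged on its own terms, your proof is correct. The decisive step is the computation of the new deflection map on old horizontal vectors: from $\widetilde{K}X=\widetilde{\nabla}_{X}\overline{\eta}=KX+\omega(X,\overline{\eta})$, $K\circ\beta=0$, and the definitions (\ref{ABN}), you get $\widetilde{K}(\beta\overline{Z})=B(\overline{Z},\overline{\eta})=N(\overline{Z})$, hence $\beta\overline{Z}=\widetilde{\beta}\,\overline{Z}+\gamma N(\overline{Z})$; applying the two horizontal projectors gives $(h-\widetilde{h})(\beta\overline{Z})=\gamma N(\overline{Z})$, while on vertical vectors $h$, $\widetilde{h}$ and $\gamma\circ N\circ\rho$ all vanish (the last because $\rho\circ\gamma=0$). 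This proves $\widetilde{h}=h-\gamma\circ N\circ\rho$ on all of $T(\T M)$, and the statements for $\widetilde{v}$ and $\widetilde{\Gamma}$ follow formally from $v=I-h$ and $\Gamma=2h-I$. The argument also correctly works for an arbitrary change of Finsler structure, since it uses only the difference tensor $\omega$.

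Two caveats. First, throughout you identify the Barthel projectors $h$, $\widetilde{h}$ with $\beta\circ\rho$, $\widetilde{\beta}\circ\rho$ built from the two Cartan connections, and $\gamma\circ K$, $\gamma\circ\widetilde{K}$ with the corresponding vertical projectors. This identification is not automatic: it is exactly the fact, stated at the end of Section 1 and cited from \cite{r94}, that the nonlinear connection induced by the Cartan connection coincides with the Barthel connection (for $L$ and for $\widetilde{L}$), combined with $K\circ\gamma=\mathrm{id}$, which follows from regularity and $T(\overline{X},\overline{\eta})=0$, also recorded in Section 1. Your proof should invoke this explicitly; once it does, your closing appeal to the uniqueness characterization of Theorem \ref{c.ba.} is superfluous, and as phrased (``no separate verification of those properties is needed'') it is slightly circular, since verifying that $2\widetilde{h}-I$ is conservative, homogeneous and torsion-free with respect to $\widetilde{L}$ is precisely the work that the cited identification spares you. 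Second, the opening plan via the spray ($G=\beta\overline{\eta}$, computing $\widetilde{G}-G$) is never used in the argument you actually carry out and can simply be deleted.
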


\Section{Conformal change of the fundamental regular  connections
and their curvature tensors}

In this section, we first review some concepts and results
concerning the conformal changes of the Cartan and Berwald
connections \cite{r62}. Then, using these results, the conformal
changes of Chern and Hashiguchi connections,  as well as their
curvature tensors, are investigated.

\begin{defn}
Let $(M,L)$ and $(M,\widetilde {L})$ be two Finsler manifolds. The
two associated metrics $g$ and $\widetilde {g}$ are said to be
conformal if there exists a positive differentiable function
$\sigma(x)$  such that $\widetilde
{g}(\overline{X},\overline{Y})=e^{2\sigma(x)}
g(\overline{X},\overline{Y})$.  Equivalently, $g$ and
$\widetilde{g}$ are conformal iff $\widetilde
{L}^2=e^{2\sigma(x)}L^2 .$ In this case, the transformation $L\To
\widetilde {L}$ is said to be a conformal transformation and the
two Finsler manifold $(M,L)$ and $(M,\widetilde{L})$ are said to
be conformal or conformally related.
\end{defn}

\begin{defn}Let $(M,L)$ and $(M,\widetilde{L})$ be two conformal Finsler
manifolds  with $\widetilde{g}=e^{2\sigma(x)}g$. \\
\emph{\textbf{(a)}} A geometric object $W$ is said to be conformally
invariant
    {\em(resp. conformally $ \sigma$-invariant)}
    if  $\widetilde{W}=W$ {\em(resp.
    $\widetilde{W}=e^{2\sigma(x)}W$)}.\\
\emph{\textbf{(b)}}  A property $\xi$ is said to be a conformal
invariant property if whenever it is possessed
    by $(M,L)$, it is also possessed by $(M,\widetilde{L})$.
 \end{defn}

\begin{defn}\label{grad} The vertical gradient of a function $f\in \mathfrak{F}(TM)$, denoted
$grad_{v}f$,  is the vertical vector field
 $JX$ defined by \vspace{-0.1cm}
 $$df(Y)=\bar{g}(JX,JY),\ \ \text{for all }\ Y{\!}\in
\mathfrak{X}(TM),\vspace{-0.1cm}$$ where $\bar{g}$ is the metric
on $V(TM)$ defined in~{\em\cite{r21}}.\vspace{-0.1cm}
$$\bar{g}(JY,JZ)=\Omega(JY,Z),\ \ \text{for all }\ Y, Z\in \mathfrak{X}(TM).$$
\end{defn}

\begin{lem}\label{lem.c1} Let $(M,L)$ and $(M,\widetilde {L})$ be  conformally related  Finsler manifolds with
$\widetilde{g}=e^{2 \sigma(x)}g $.  The associated
  Barthel connections $\Gamma$ and  $\widetilde{\Gamma}$ are related by
\vspace{-0.1cm}
\begin{equation}\label{eq.1a}
    \begin{array}{rcl}
        \widetilde{\Gamma}& = &\Gamma - 2\L , \\
    \text{where} \qquad\qquad \L &:=& d\sigma \otimes \mathcal{C}+ \sigma_{1}J - d_{J}E \otimes
grad_{v}\sigma - EF= \gamma o N o \rho,
 \vspace{-0.2cm}
    \end{array}
\end{equation}
\noindent  $\sigma_{1}:=d_{G}\sigma $ and  $F:=[J,grad_{v}\sigma]$.\\
      Consequently,
$\widetilde{h} = h - \L $, \,\,  $\widetilde{v} = v + \L $ or,
equivalently, \,$\widetilde{\beta} = \beta - \L o \beta  $, \,\,
$\widetilde{K} = K + K o \L $.
 \end{lem}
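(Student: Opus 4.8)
The relation $\widetilde{\Gamma}=\Gamma-2\L$ with $\L=\gamma\circ N\circ\rho$, together with $\widetilde{h}=h-\L$ and $\widetilde{v}=v+\L$, is exactly Proposition~\ref{pp.1} specialized to the conformal change $\widetilde{g}=e^{2\sigma}g$; the only genuinely new assertion is the \emph{explicit} closed form of $\L$. The plan is to obtain it by computing the canonical spray $\widetilde{G}$ of $\widetilde{L}$ (Proposition~\ref{ch.6pp}) and invoking $\Gamma=[J,G]$ (Theorem~\ref{c.ba.}). Since $\widetilde{\Gamma}=[J,\widetilde{G}]$ and the Fr\"olicher--Nijenhuis bracket $[J,\cdot]$ is additive in its vector-field slot, $\widetilde{\Gamma}-\Gamma=[J,\widetilde{G}-G]$, so the $\L$ of Proposition~\ref{pp.1} equals $-\tfrac12[J,X]$ with $X:=\widetilde{G}-G$, and it suffices to evaluate this.

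First I would compute $\widetilde{\Omega}=dd_{J}\widetilde{E}$. Because $\sigma$ is a basic function, $d\sigma$ annihilates vertical vectors, hence $d_{J}\sigma=0$; consequently $d_{J}\widetilde{E}=d_{J}(e^{2\sigma}E)=e^{2\sigma}d_{J}E$ and therefore $\widetilde{\Omega}=e^{2\sigma}\left(\Omega+2\,d\sigma\wedge d_{J}E\right)$. Next I would use that $G$ and $\widetilde{G}$ are both sprays, so $JG=\mathcal{C}=J\widetilde{G}$ and $X=\widetilde{G}-G$ lies in $\ker J=V(\T M)$, i.e. $X$ is vertical. For vertical $X$ one checks $i_{X}(d\sigma\wedge d_{J}E)=0$ (both $d\sigma(X)=0$ and $d_{J}E(X)=dE(JX)=0$), so that $i_{X}\widetilde{\Omega}=e^{2\sigma}i_{X}\Omega$. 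Substituting $\widetilde{G}=G+X$ into the defining equation $i_{\widetilde{G}}\widetilde{\Omega}=-d\widetilde{E}$ and using $i_{G}\Omega=-dE$, $\,G(\sigma)=d_{G}\sigma=\sigma_{1}$ and $dE(\mathcal{C})=\mathcal{C}(E)=2E$ (Euler's relation, $E$ being $2$-homogeneous), the equation collapses to $i_{X}\Omega=2E\,d\sigma-2\sigma_{1}\,d_{J}E$.

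To solve this for $X$ I would use the two identities $i_{grad_{v}\sigma}\Omega=d\sigma$ (immediate from Definition~\ref{grad} and $\Omega(JY,Z)=\bar{g}(JY,JZ)$) and $i_{\mathcal{C}}\Omega=d_{J}E$ (from $i_{\mathcal{C}}\Omega=\mathcal{L}_{\mathcal{C}}d_{J}E-d\,i_{\mathcal{C}}d_{J}E=d_{J}E$, since $i_{\mathcal{C}}d_{J}E=dE(J\mathcal{C})=0$ and $d_{J}E$ is $1$-homogeneous). As $i_{(\cdot)}\Omega$ is $\mathfrak{F}(TM)$-linear and $\Omega$ is nondegenerate, this forces $X=2E\,grad_{v}\sigma-2\sigma_{1}\mathcal{C}$. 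Finally $\L=-\tfrac12[J,X]$ is evaluated through the Leibniz rule $[J,fY]=f[J,Y]+(d_{J}f)\otimes Y-df\otimes JY$, together with $J\,grad_{v}\sigma=J\mathcal{C}=0$, the definition $F:=[J,grad_{v}\sigma]$, the standard value $[J,\mathcal{C}]=J$ (equivalently $\mathcal{L}_{\mathcal{C}}J=-J$), and the identity $d_{J}\sigma_{1}=d\sigma$ (valid for basic $\sigma$ since $\sigma_{1}=d_{G}\sigma$). This produces exactly $\L=d\sigma\otimes\mathcal{C}+\sigma_{1}J-d_{J}E\otimes grad_{v}\sigma-EF$, which therefore coincides with the $\gamma\circ N\circ\rho$ of Proposition~\ref{pp.1}, both being $\tfrac12(\Gamma-\widetilde{\Gamma})$.

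The consequences are then pure bundle algebra. Since $\widetilde{h}=h-\L$ with $\beta\circ\rho=h$, $\rho\circ\beta=id$, and since $\L=\gamma\circ N\circ\rho$ vanishes on $V(\T M)$ while $\rho\circ\gamma=0$, I would verify $\rho\circ(\beta-\L\circ\beta)=id$ and $(\beta-\L\circ\beta)\circ\rho=h-\L=\widetilde{h}$, giving $\widetilde{\beta}=\beta-\L\circ\beta$; and from $\widetilde{v}=v+\L=\gamma\circ\widetilde{K}$ with $K\circ\gamma=id$ one gets $\widetilde{K}=K+N\circ\rho=K+K\circ\L$. The main obstacle is the closing step of the spray computation, namely the Fr\"olicher--Nijenhuis/Lie-derivative bookkeeping in evaluating $[J,X]$: this is where the precise shape of $\L$ (in particular the terms $\sigma_{1}J$, $d\sigma\otimes\mathcal{C}$ and $EF$) is born, and where the sign convention for $[J,\mathcal{C}]$ and the identity $d_{J}\sigma_{1}=d\sigma$ must be pinned down carefully.
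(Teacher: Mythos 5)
This lemma carries no proof in the paper: it is imported verbatim from \cite{r62}, so the only available comparison is with the Klein--Grifone derivation underlying that cited source, and your argument is exactly that derivation, carried out correctly. All the checkpoints verify: $d_{J}\sigma=0$ for basic $\sigma$ gives $\widetilde{\Omega}=e^{2\sigma}(\Omega+2\,d\sigma\wedge d_{J}E)$; since $JG=J\widetilde{G}=\mathcal{C}$ the difference $X=\widetilde{G}-G$ is vertical, so $i_{X}(d\sigma\wedge d_{J}E)=0$ and the spray equation collapses to $i_{X}\Omega=2E\,d\sigma-2\sigma_{1}d_{J}E$, which the identities $i_{grad_{v}\sigma}\Omega=d\sigma$, $i_{\mathcal{C}}\Omega=d_{J}E$ and the nondegeneracy of $\Omega$ (regularity of $E$) solve uniquely as $X=2E\,grad_{v}\sigma-2\sigma_{1}\mathcal{C}$, in agreement with the classical local formula $\widetilde{G}^{i}=G^{i}+\sigma_{1}y^{i}-E\sigma^{i}$; and the bracket bookkeeping with $[J,fY]=f[J,Y]+d_{J}f\otimes Y-df\otimes JY$, $[J,\mathcal{C}]=J$, $J\,grad_{v}\sigma=0$ and $d_{J}\sigma_{1}=d\sigma$ yields $-\tfrac{1}{2}[J,X]=d\sigma\otimes\mathcal{C}+\sigma_{1}J-d_{J}E\otimes grad_{v}\sigma-EF$, as stated. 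The identification with $\gamma\circ N\circ\rho$ via uniqueness in Proposition \ref{pp.1}, and the deductions of $\widetilde{\beta}=\beta-\gamma\circ N\circ\rho\circ\beta$ and $\widetilde{K}=K+N\circ\rho$ from $\rho\circ\gamma=0$ and $K\circ\gamma=id$, are likewise sound; the only step you might spell out is the homogeneity $\mathcal{L}_{\mathcal{C}}d_{J}E=d_{J}E$ used for $i_{\mathcal{C}}\Omega=d_{J}E$, which is a one-line local check.
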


Concerning the conformal change of the Cartan and Berwald
connections and their curvature tensors, we have the following two
results \cite{r62}.
 \vspace{-0.1cm}
\begin{thm}\label{th.c1} If $(M,L)$ and $(M,\widetilde {L})$ are conformally related Finsler
manifolds, then the associated Cartan  connections $\nabla$ and
  $\widetilde{\nabla}$  are related by:\vspace{-0.2cm}
\begin{equation*}\label{eq.1}
\widetilde {\nabla} _{X}\overline{Y} =
    \nabla _{X}\overline{Y}+ \omega(X, \overline{Y}), \vspace{-0.1cm}
 \end{equation*}
where \vspace{-0.1cm}
 \begin{equation*}\label{eq.t6}
    \begin{split}
     \omega(X,\overline{Y}):=&(h X \cdot \sigma (x)) \overline{Y}
     + (\beta \overline{Y} \cdot \sigma (x)) \rho X -g(\rho X,
     \overline{Y}) \overline{P} \\
     &
     - T(N \overline{Y},\rho X) +  T'(\L X,\beta \overline{Y}),
     \end{split}\vspace{-0.2cm}
\end{equation*}
\noindent $\overline{P}$ being a $\pi$-vector field defined by
\vspace{-0.1cm}
\begin{equation*}
    g(\overline{P}, \rho Z)=hZ \cdot \sigma(x)\vspace{-0.1cm}
\end{equation*}
and  $T'$ being a 2-form on $TM$, with values in $\pi^{-1}(TM)$,
defined by
  \begin{equation*}\label{eq.t5}
     g(T'(\L X,hY),\rho Z)= g(T(N \rho Z, \rho Y),\rho X).\vspace{-0.1cm}
\end{equation*}
In particular,
\begin{description}

  \item[(a)]  $ \widetilde {\nabla}_{\gamma \overline{X}}\overline{Y}=
    {\nabla}_{ \gamma \overline{X}}\overline{Y}$,

  \item[(b)] $\widetilde {\nabla} _{\widetilde{\beta}\overline{X}}\overline{Y} =
    \nabla _{\beta \overline{X}}\overline{Y}-\nabla _{\L \beta
\overline{X}}\overline{Y}+B(\overline{X}, \overline{Y})
    $.
\end{description}
 The associated curvature tensors  are related by\,{\em:}
 \begin{description}
  \item[$\textbf{(a)}'$] $\widetilde {S}(\overline{X},\overline{Y})\overline{Z}=
S(\overline{X},\overline{Y})\overline{Z}$.

  \item[$\textbf{(b)}'$] $ \widetilde {P}(\overline{X},\overline{Y})\overline{Z}=
  P(\overline{X},\overline{Y})\overline{Z}+
  V(\overline{X},\overline{Y})\overline{Z},$

  \item [$\textbf{(c)}'$]
 $\widetilde {R}(\overline{X}, \overline{Y})\overline{Z} = R(\overline{X},\overline{Y})\overline{Z}
 +H(\overline{X},\overline{Y})\overline{Z}$,
\end{description}
\noindent where $H$ and $V$ are the $\pi$-tensor fields
defined by\vspace{-0.2cm}
\begin{equation*}
    \begin{split}
      V(\overline{X},\overline{Y})\overline{Z} =  (\nabla_{\gamma \overline{Y}}B)(\overline{X},\overline{Z})
     + B(T( \overline{Y},\overline{X}),\overline{Z})-S(N\overline{X}, \overline{Y})
     \overline{Z},
    \end{split}
\end{equation*}
\vspace{-0.1cm}
  \begin{equation*}
    \begin{split}
    H(\overline{X}, \overline{Y})\overline{Z} =&
    S(N\overline{X} ,N \overline{Y})\overline{Z}
    - \mathfrak{U}_{\overline{X},\overline{Y}} \{
    (\nabla_{\beta \overline{X}}B)( \overline{Y},\overline{Z}) -(\nabla_{\L \beta \overline{X}}B)( \overline{Y},\overline{Z})
\\
         & +P(\overline{X}, N \overline{Y})\overline{Z}
                  +B( \overline{X},B(\overline{Y},\overline{Z}))- B(T(N
         \overline{X},\overline{Y}),\overline{Z})\};
          \end{split}
\end{equation*}
$B$ being defined by (\ref{ABN}) and $\L$ by (\ref{eq.1a}).
\end{thm}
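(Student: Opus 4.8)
The plan is to regard $\omega := \widetilde{\nabla} - \nabla$ as the difference of two linear connections on $\pi^{-1}(TM)$: since the inhomogeneous $Xf$ terms cancel, the map $(X,\overline{Y}) \mapsto \widetilde{\nabla}_X\overline{Y} - \nabla_X\overline{Y}$ is $\mathfrak{F}(TM)$-bilinear, so $\omega$ is a genuine vector-valued $\pi$-tensor field, $\mathfrak{F}(TM)$-linear in $X$. I would then pin $\omega$ down by imposing on $\widetilde{\nabla}$ the three axioms characterizing the Cartan connection of $\widetilde{L}$: metric compatibility $\widetilde{\nabla}\widetilde{g} = 0$, vanishing (h)h-torsion $\widetilde{Q} = 0$, and total symmetry (through $\widetilde{g}$) of the (h)hv-torsion $\widetilde{T}$. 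Inserting $\widetilde{g} = e^{2\sigma(x)}g$ into the first axiom and using $\nabla g = 0$ makes everything collapse to
\begin{equation*}
g(\omega(X,\overline{Y}),\overline{Z}) + g(\omega(X,\overline{Z}),\overline{Y}) = 2\,(X\cdot\sigma)\,g(\overline{Y},\overline{Z}),
\end{equation*}
which fixes the $g$-symmetric part of $\omega$ in terms of $d\sigma$.

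Next I would split $\omega$ along the $\nabla$-decomposition, writing $\omega(X,\overline{Y}) = B(\rho X,\overline{Y}) + A(KX,\overline{Y})$ with $A(\overline{X},\overline{Y}) := \omega(\gamma\overline{X},\overline{Y})$ and $B(\overline{X},\overline{Y}) := \omega(\beta\overline{X},\overline{Y})$ as in (\ref{ABN}). For the vertical piece, putting $X = \gamma\overline{X}$ in the displayed identity and using that $\sigma = \sigma(x)$ is a function of position only, so $\gamma\overline{X}\cdot\sigma = 0$, shows the $g$-symmetric part of $A$ vanishes; combined with the symmetry axiom on $\widetilde{T}$ this forces $A = 0$, which is exactly (a). For the horizontal piece, the horizontal restriction of the identity yields the $g$-symmetric part of $B$ in terms of $hX\cdot\sigma$, and the vanishing of $\widetilde{Q}$ together with Lemma \ref{lem.c1} — which provides $\widetilde{\beta} = \beta - \L\circ\beta$ and so connects $\widetilde{\nabla}_{\widetilde{\beta}\overline{X}}$ to $\nabla_{\beta\overline{X}}$ and $\nabla_{\L\beta\overline{X}}$ — pins down the remaining antisymmetric part. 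Reassembling $A = 0$ with this $B$ gives the asserted closed form of $\omega$ (with $\overline{P}$, $T$ and $T'$ as defined there) and, after substituting the Barthel relation, formula (b).

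For the curvature identities the plan is to feed $\widetilde{\nabla} = \nabla + \omega$ into the definition $\textbf{K}(X,Y)\rho Z = -D_X D_Y\rho Z + D_Y D_X\rho Z + D_{[X,Y]}\rho Z$ and then contract with the horizontal and vertical maps belonging to $\widetilde{\nabla}$. Since $\widetilde{\gamma} = \gamma$ and $A = 0$, the purely vertical contraction reproduces the $\nabla$-computation term for term, giving $\widetilde{S} = S$, i.e. (a)$'$. For $\widetilde{P}$ one slot is $\widetilde{\beta}$ and one is $\gamma$; the surviving correction terms organize into $V(\overline{X},\overline{Y})\overline{Z} = (\nabla_{\gamma\overline{Y}}B)(\overline{X},\overline{Z}) + B(T(\overline{Y},\overline{X}),\overline{Z}) - S(N\overline{X},\overline{Y})\overline{Z}$, giving (b)$'$. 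For $\widetilde{R}$ both slots are $\widetilde{\beta}$, so the deformation $\widetilde{\beta} = \beta - \L\circ\beta$ must be propagated through both nested covariant derivatives; collecting the resulting $\nabla B$, $B\circ B$, $S$, $P$ and $\L$-correction terms and antisymmetrizing with the cyclic operator $\mathfrak{U}_{\overline{X},\overline{Y}}$ produces $H$, hence (c)$'$.

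I expect the horizontal-curvature computation, the extraction of $H$, to be the main obstacle. Unlike $\widetilde{S}$ and $\widetilde{P}$, it is sensitive to the deformation of the horizontal distribution itself, so each factor $\widetilde{\beta}\overline{X}$ splits into a $\beta\overline{X}$ part and an $\L\beta\overline{X}$ correction; tracking these consistently across two nested derivatives, correctly pairing the $N$-dependent contributions $S(N\overline{X},N\overline{Y})\overline{Z}$ and $B(T(N\overline{X},\overline{Y}),\overline{Z})$, and handling the cyclic antisymmetrization, is where essentially all the labor sits. A secondary point of care is checking tensoriality at each stage — that the spurious $Xf$ terms cancel — so that the contractions with $\widetilde{\beta}$ and $\gamma$ are legitimate.
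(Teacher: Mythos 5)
There is nothing in this paper to check your argument against: Theorem \ref{th.c1} is not proved here but quoted, together with Theorem \ref{th.4B}, from the earlier paper \cite{r62}; within the present paper it functions as an imported lemma, from which Theorems \ref{th.4} and \ref{th.h4} are then derived. So your proposal can only be judged on its own merits, and as a strategy it is the right one: determining the tensor $\omega=\widetilde{\nabla}-\nabla$ from the axioms characterizing the Cartan connection ($\widetilde{\nabla}\widetilde{g}=0$, vanishing (h)h-torsion, symmetry of the lowered (h)hv-torsion) together with the change of Barthel connection from Lemma \ref{lem.c1} is exactly the uniqueness-based derivation this formalism is built for, and your metricity identity is correct. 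Your elimination of the vertical part $A$ also goes through, but you should make explicit the step it silently uses: one needs $\widetilde{T}=T+A$, which holds because the discrepancy between the two (h)hv-torsions is $\rho[\gamma\overline{X},\L\beta\overline{Y}]$, and this vanishes since $\L$ is vertical-valued and the bracket of two vertical vector fields is again vertical, hence annihilated by $\rho$. Granted that, the lowered $A$ is antisymmetric in its last two arguments by metricity (because $\gamma\overline{X}\cdot\sigma=0$) and totally symmetric by the torsion axiom, hence zero, which is item (a).

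Two cautions separate your plan from a complete proof. First, imposing the axioms of $\widetilde{\nabla}$ along $\widetilde{\beta}$ presupposes that the horizontal map attached to the Cartan connection of $\widetilde{L}$ is the one induced by the Barthel connection $\widetilde{\Gamma}$; that is the fact recalled at the end of \S 1 from \cite{r94}, and it must be invoked, since otherwise Lemma \ref{lem.c1} gives no information about $\widetilde{\nabla}$. Second, everything that is specific to the stated formulas remains to be computed: the terms $-T(N\overline{Y},\rho X)$, $T'(\L X,\beta\overline{Y})$ and $\overline{P}$ emerge from the mixed brackets $[\L\beta\overline{X},\beta\overline{Y}]$ in the horizontal Koszul-type computation (using $\rho[\gamma\overline{Z},\beta\overline{Y}]=\nabla_{\gamma\overline{Z}}\overline{Y}-T(\overline{Z},\overline{Y})$ to convert them into $\nabla_{\L\beta\overline{X}}$- and $T(N\cdot,\cdot)$-type contributions), and the tensors $V$ and especially $H$ require propagating $\widetilde{\beta}=\beta-\L\circ\beta$ through two nested covariant derivatives and through the bracket term $\widetilde{\nabla}_{[\widetilde{\beta}\overline{X},\widetilde{\beta}\overline{Y}]}\overline{Z}$. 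You rightly identify this as the bulk of the labor; since those explicit formulas are the actual content of the theorem, what you have is a sound, and almost certainly the intended, line of proof whose decisive computations are still to be carried out.
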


\begin{thm}\label{th.4B} If $(M,L)$ and $(M,\widetilde {L})$ are conformally related Finsler
manifolds, then the associated Berwald  connections $D^{\circ}$ and
  $\widetilde{D^{\circ}}$  are related by:\vspace{-0.2cm}
\begin{equation*}\label{eq.2a}
    \widetilde{D^{\circ}}_{X}\overline{Y}=D^{\circ}_{X}\overline{Y}+\omega^{\circ}(X,\overline{Y} ),\vspace{-0.2cm}
\end{equation*}

\noindent  where\ \ $\omega^{\circ}(X,\overline{Y})=K([\gamma
\overline{Y}, \L]X)+D^{\circ}_{\L X}\overline{Y}$
\vspace{4pt}\\
\noindent In particular, we have \vspace{-0.1cm}
\begin{description}

  \item[(a)]$\widetilde{D^{\circ}}_{\gamma \overline{X}}\overline{Y}=D^{\circ}_{\gamma \overline{X}}\overline{Y}$

  \item[(b)] $\widetilde{D^{\circ}}_{\widetilde{\beta} \overline{X} }\overline{Y}=
    D^{\circ}_{\beta \overline{X}}\overline{Y}- D^{\circ}_{\gamma N \o X}\o Y
    +B^{\circ}( \o X, \o Y)$.
\end{description}
 The associated curvature tensors  are related by\,{\em:}
\begin{description}

  \item[$\textbf{(a)}'$] $\widetilde{S^{\circ}}(\overline{X},\overline{Y})\overline{Z}=
S^{\circ}(\overline{X},\overline{Y})\overline{Z}=0 .$

  \item[$\textbf{(b)}'$]  $ \widetilde{P^{\circ}}(\overline{X},\overline{Y})\overline{Z}=
   P^{\circ}(\overline{X},\overline{Y})\overline{Z} + (D^{\circ}_{\gamma \overline{Y}}B^{\circ})(\overline{X},\overline{Z}).$

  \item[$\textbf{(c)}'$]  $\widetilde{R^{\circ}}(\overline{X}, \overline{Y})\overline{Z} =
  R^{\circ}(\overline{X},\overline{Y})\overline{Z}
 + \mathfrak{U}_{\overline{X},\overline{Y}}
 \{(D^{\circ}_{\gamma N \overline{X}}B^{\circ})(\overline{Y},\overline{Z})
     - (D^{\circ}_{\beta
     \overline{X}}B^{\circ})(\overline{Y},\overline{Z})$\\
     ${{\qquad\qquad\quad}}+
         P^{\circ}(\overline{Y}, N \overline{X})\overline{Z} -B^{\circ}( \overline{X},B^{\circ}(\overline{Y},\overline{Z}))\},$
\end{description}
\noindent  where \,\, $ B^{\circ}( \o X, \o Y):=\omega^{\circ}(\beta
\overline{X},\overline{Y}).$
\end{thm}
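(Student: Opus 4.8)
The plan is to exploit the fact that the Berwald connection admits an intrinsic characterization purely in terms of the Barthel (nonlinear) connection, with no reference to the metric. Concretely, I would start from the bracket formulas
$$D^{\circ}_{\gamma\overline{X}}\overline{Y}=\rho[\gamma\overline{X},\beta\overline{Y}],\qquad D^{\circ}_{\beta\overline{X}}\overline{Y}=K[\beta\overline{X},\gamma\overline{Y}],$$
which combine into $D^{\circ}_{X}\overline{Y}=\rho[vX,\beta\overline{Y}]+K[hX,\gamma\overline{Y}]$ for arbitrary $X\in\mathfrak{X}(\T M)$ (each formula is checked against the coordinate expression of the Berwald symbols). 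The decisive point is that $\gamma$ and $\rho$ are canonical, hence metric-free, so under the conformal change only $\beta,K,h,v$ vary, and their variation is governed entirely by the single object $\L$ through Lemma \ref{lem.c1}: $\widetilde{h}=h-\L$, $\widetilde{v}=v+\L$, $\widetilde{\beta}=\beta-\L\circ\beta$, $\widetilde{K}=K+K\circ\L$.

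Next I would substitute these into $\widetilde{D^{\circ}}_{X}\overline{Y}=\rho[\widetilde{v}X,\widetilde{\beta}\overline{Y}]+\widetilde{K}[\widetilde{h}X,\gamma\overline{Y}]$ and expand by bilinearity of the bracket. The computation collapses using the identities $\rho\circ\L=0$, $\L\circ\gamma=0$, $\rho\circ\gamma=0$, $K\circ\gamma=\mathrm{id}$, $\rho\circ\beta=\mathrm{id}$, together with the fact that $\L$ takes values in $V(\T M)$, so any bracket of two vertical fields is annihilated by $\rho$. The piece $\rho[\L X,\beta\overline{Y}]$ reproduces $D^{\circ}_{\L X}\overline{Y}$, since $\L X=\gamma(N\rho X)$ is vertical and the first bracket formula applies; the remaining pieces are $-K[\L X,\gamma\overline{Y}]+N\rho[hX,\gamma\overline{Y}]$, and after rewriting this as $K[\gamma\overline{Y},\L X]-K\L[\gamma\overline{Y},X]=K([\gamma\overline{Y},\L]X)$ (the commutator of the field $\gamma\overline{Y}$ with the vector $1$-form $\L$), I obtain $\omega^{\circ}(X,\overline{Y})=K([\gamma\overline{Y},\L]X)+D^{\circ}_{\L X}\overline{Y}$. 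Items (a) and (b) then follow by specializing: $\omega^{\circ}(\gamma\overline{X},\overline{Y})=0$ because $\L\gamma\overline{X}=0$ and $\L$ kills vertical brackets, giving (a); and $\widetilde{\beta}\overline{X}=\beta\overline{X}-\gamma N\overline{X}$ (using $\rho\beta=\mathrm{id}$) together with the vanishing of $\omega^{\circ}$ on vertical arguments gives (b).

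For the curvature tensors I would feed the additive decomposition $\widetilde{D^{\circ}}=D^{\circ}+\omega^{\circ}$ into the defining formula $\textbf{K}^{\circ}(X,Y)\overline{Z}=-D^{\circ}_{X}D^{\circ}_{Y}\overline{Z}+D^{\circ}_{Y}D^{\circ}_{X}\overline{Z}+D^{\circ}_{[X,Y]}\overline{Z}$ and restrict the arguments to the appropriate $\widetilde{\beta}$- or $\gamma$-images. For $\widetilde{S^{\circ}}$ both arguments are vertical, and since $\omega^{\circ}$ vanishes on vertical arguments by (a) (and $[\gamma\overline{X},\gamma\overline{Y}]$ is again vertical), $\widetilde{S^{\circ}}$ and $S^{\circ}$ coincide; the vanishing $S^{\circ}=0$ is the standard fact that the Berwald connection has trivial vertical part, giving (a)$'$. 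For $\widetilde{P^{\circ}}$ I take $X=\widetilde{\beta}\overline{X}$, $Y=\gamma\overline{Y}$, substitute (a) for the vertical derivatives and (b) for the $\widetilde{\beta}$-horizontal one; the purely old terms reconstitute $P^{\circ}$, no quadratic $B^{\circ}\!\circ\!B^{\circ}$ term survives because the vertical slot kills $\omega^{\circ}$, and the lone correction organizes into the tensor derivative $(D^{\circ}_{\gamma\overline{Y}}B^{\circ})(\overline{X},\overline{Z})$, giving (b)$'$.

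The hard part is (c)$'$. Here both arguments $\widetilde{\beta}\overline{X},\widetilde{\beta}\overline{Y}$ are $\widetilde{\beta}$-horizontal, and the obstacle is the bracket $[\widetilde{\beta}\overline{X},\widetilde{\beta}\overline{Y}]$: its decomposition into horizontal and vertical parts produces, through the appearance of $N$, exactly the mixed term $P^{\circ}(\overline{Y},N\overline{X})\overline{Z}=\textbf{K}^{\circ}(\beta\overline{Y},\gamma N\overline{X})\overline{Z}$ and the difference $(D^{\circ}_{\gamma N\overline{X}}B^{\circ})(\overline{Y},\overline{Z})-(D^{\circ}_{\beta\overline{X}}B^{\circ})(\overline{Y},\overline{Z})$, while the quadratic terms in $\omega^{\circ}$ contribute $B^{\circ}(\overline{X},B^{\circ}(\overline{Y},\overline{Z}))$. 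The bookkeeping challenge, which I expect to be the only genuinely delicate step, is to expand each $(D^{\circ}B^{\circ})$ as a true tensor derivative (subtracting the $B^{\circ}(D^{\circ}\cdot,\cdot)$ corrections), to verify that all contributions symmetric in $(\overline{X},\overline{Y})$ cancel, and to collect precisely the antisymmetric remainder into the cyclic sum $\mathfrak{U}_{\overline{X},\overline{Y}}$; once those symmetric cancellations are confirmed, the four listed terms fall out.
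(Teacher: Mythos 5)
First, a point of reference: this paper never proves Theorem \ref{th.4B} at all --- it is quoted from the authors' earlier work (``we have the following two results \cite{r62}''), so there is no in-paper proof to compare you against; the nearest visible template is the proofs of the sibling Theorems \ref{th.4} and \ref{th.h4}. Measured on its own merits, your derivation is correct. The bracket characterization $D^{\circ}_{\gamma\overline{X}}\overline{Y}=\rho[\gamma\overline{X},\beta\overline{Y}]$, $D^{\circ}_{\beta\overline{X}}\overline{Y}=K[\beta\overline{X},\gamma\overline{Y}]$ is the standard metric-free description of the Berwald connection; note it is not stated in this paper either (it lives in the authors' references \cite{r94}, \cite{r96}), so you are importing an external lemma, and your coordinate verification is what licenses it. Granting it, the expansion goes through exactly as you say: the terms $\rho[vX,\L\beta\overline{Y}]$, $\rho[\L X,\L\beta\overline{Y}]$ and $K\L[\L X,\gamma\overline{Y}]$ die because brackets of vertical fields are vertical and $\rho$ (hence $\L=\gamma\circ N\circ\rho$) annihilates them, leaving $\rho[\L X,\beta\overline{Y}]-K[\L X,\gamma\overline{Y}]+N\rho[hX,\gamma\overline{Y}]$, which with the convention $[\gamma\overline{Y},\L]X=[\gamma\overline{Y},\L X]-\L[\gamma\overline{Y},X]$ and $K\circ\gamma=id$ is precisely $D^{\circ}_{\L X}\overline{Y}+K([\gamma\overline{Y},\L]X)$; items \textbf{(a)}, \textbf{(b)} and $\textbf{(a)}'$ then follow as you indicate.

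As to method: the route this paper uses for the Chern and Hashiguchi analogues is genuinely different --- express the connection through the Cartan connection (Theorems \ref{th.r5}, \ref{th.t12}) and push the Cartan change (Theorem \ref{th.c1}) through, using conformal invariance of $T$; the corresponding Berwald argument would start from $D^{\circ}_{X}\overline{Y}=\nabla_{X}\overline{Y}-T(KX,\overline{Y})+\widehat{P}(\rho X,\overline{Y})$ and would additionally need the change of $\widehat{P}$ from Theorem \ref{th.c1}$\textbf{(b)}'$. Your approach instead uses only the Barthel-level data of Lemma \ref{lem.c1}, never touching $g$, $T$ or $\omega$; that is what makes the compact bracket form of $\omega^{\circ}$ (which would take real massaging to extract from the Cartan route) and the vanishing of $\omega^{\circ}$ on vertical arguments fall out immediately, and it explains structurally why the statement of the theorem is phrased in bracket terms in the first place. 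The one place you merely gesture is $\textbf{(c)}'$ (and, to a lesser degree, the term $D^{\circ}_{[\widetilde{\beta}\overline{X},\gamma\overline{Y}]}\overline{Z}$ in $\textbf{(b)}'$, whose bracket must be decomposed with respect to $\widetilde{h},\widetilde{v}$ and where the cancellation of the double $\gamma N\overline{X}$-derivatives silently uses $S^{\circ}=0$): the reassembly into $\mathfrak{U}_{\overline{X},\overline{Y}}\{\cdots\}$ is asserted rather than performed. Since the target formulas are given and the remaining work is routine tensor bookkeeping along lines you correctly describe, I count this as incomplete detail, not a gap.
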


Now, we  turn our attention to the Chern and Hashigauchi
connections.\vspace{-0.2cm}
\begin{thm}\label{th.4}  Let $(M,L)$ and $(M,\widetilde {L})$ be  conformally related  Finsler manifolds with
$\widetilde{g}=e^{2 \sigma(x)}g $.  The associated
  Chern connections  $D^{c}$ and  $\widetilde{D^{c}}$ are related by
 \begin{equation}\label{eq.2a}
    \widetilde{D^{c}}_{X}\overline{Y}=D^{c}_{X}\overline{Y}+{\omega}^{c}(X,\overline{Y} ),\vspace{-0.2cm}
 \end{equation}
  where \vspace{-0.1cm}
 \begin{equation*}\label{eq.t6}
    \begin{split}
     {\omega}^{c}(X,\overline{Y}):=&(h X \cdot \sigma (x)) \overline{Y}
     + (\beta \overline{Y} \cdot \sigma (x)) \rho X -g(\rho X,
     \overline{Y}) \overline{P} \\
     & - T(N \overline{Y},\rho X) +  T'(\L X,\beta \overline{Y})- T(N \rho X,\overline{Y}) ,
     \end{split}\vspace{-0.2cm}
\end{equation*}
   \vspace{4pt}
 \noindent In particular, we have \vspace{-0.1cm}
\begin{description}

  \item[(a)]$\widetilde{D^{c}}_{\gamma \overline{X}}\overline{Y}=D^{c}_{\gamma \overline{X}}\overline{Y}$

  \item[(b)] $\widetilde{D^{c}}_{\widetilde{\beta} \overline{X} }\overline{Y}=
    D^{c}_{\beta \overline{X}}\overline{Y}- D^{c}_{\gamma N
\overline{X}}\o Y +{B^{c}}(\overline{X},\overline{Y})$,
\end{description}
where $B^{c}(\overline{X},
\overline{Y}):=\omega^{c}(\beta\overline{X}, \overline{Y})$.
 \end{thm}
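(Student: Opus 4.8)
\noindent\emph{Proof sketch.} The plan is to read off the conformal change of $D^{c}$ directly from two facts already established: the expression of the Chern connection through the Cartan connection (Theorem \ref{th.r5}) and the conformal change of the Cartan connection (Theorem \ref{th.c1}). Writing Theorem \ref{th.r5} for the deformed structure gives
$$\widetilde{D^{c}}_{X}\overline{Y} = \widetilde{\nabla}_{X}\overline{Y} - \widetilde{T}(\widetilde{K}X,\overline{Y}),$$
so the whole computation reduces to controlling three quantities on the right: $\widetilde{\nabla}$ (known from Theorem \ref{th.c1}), the deformed connection map $\widetilde{K}$ (known from Lemma \ref{lem.c1}), and the deformed $(h)hv$-torsion $\widetilde{T}$.

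First I would settle the conformal invariance of the $(h)hv$-torsion. Reading Theorem \ref{th.r5}(a) as $T(\overline{X},\overline{Y}) = \nabla_{\gamma\overline{X}}\overline{Y} - D^{\circ}_{\gamma\overline{X}}\overline{Y}$ and combining it with $\widetilde{\nabla}_{\gamma\overline{X}}\overline{Y} = \nabla_{\gamma\overline{X}}\overline{Y}$ (Theorem \ref{th.c1}(a)) and $\widetilde{D^{\circ}}_{\gamma\overline{X}}\overline{Y} = D^{\circ}_{\gamma\overline{X}}\overline{Y}$ (Theorem \ref{th.4B}(a)), one gets $\widetilde{T} = T$. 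Next, Lemma \ref{lem.c1} gives $\widetilde{K} = K + K\circ\L$; since $\L = \gamma\circ N\circ\rho$ is vertical-valued while $K\circ\gamma = \mathrm{id}_{\pi^{-1}(TM)}$, it follows that $\widetilde{K}X = KX + N\rho X$. Substituting these together with $\widetilde{\nabla}_{X}\overline{Y} = \nabla_{X}\overline{Y} + \omega(X,\overline{Y})$ into the displayed formula and regrouping $\nabla_{X}\overline{Y} - T(KX,\overline{Y}) = D^{c}_{X}\overline{Y}$ yields
$$\widetilde{D^{c}}_{X}\overline{Y} = D^{c}_{X}\overline{Y} + \omega(X,\overline{Y}) - T(N\rho X,\overline{Y}),$$
which is exactly (\ref{eq.2a}) with $\omega^{c}(X,\overline{Y}) = \omega(X,\overline{Y}) - T(N\rho X,\overline{Y})$; inserting the explicit six-term expression for $\omega$ from Theorem \ref{th.c1} produces the stated formula for $\omega^{c}$.

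The two special cases are then immediate. For \textbf{(a)}, putting $X = \gamma\overline{X}$ makes $hX$, $\rho X$ and $\L X$ all vanish, so every term of $\omega^{c}(\gamma\overline{X},\overline{Y})$ dies and $\widetilde{D^{c}}_{\gamma\overline{X}}\overline{Y} = D^{c}_{\gamma\overline{X}}\overline{Y}$. For \textbf{(b)}, Lemma \ref{lem.c1} gives $\widetilde{\beta} = \beta - \L\circ\beta$, hence $\widetilde{\beta}\overline{X} = \beta\overline{X} - \gamma N\overline{X}$ (using $\rho\circ\beta = \mathrm{id}$); splitting $\widetilde{D^{c}}_{\widetilde{\beta}\overline{X}}\overline{Y}$ along this sum, applying part (a) to the vertical piece $\gamma N\overline{X}$ and the main formula to the horizontal piece $\beta\overline{X}$ gives the stated identity with $B^{c}(\overline{X},\overline{Y}) = \omega^{c}(\beta\overline{X},\overline{Y})$.

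I expect the only genuine subtlety to be the step $\widetilde{T} = T$: one must keep in mind that $\widetilde{T}$ is the torsion of the \emph{deformed} Cartan connection and that $\widetilde{K}$ is its \emph{own} connection map, so the reduction is legitimate only because both the Cartan and the Berwald connections have conformally invariant vertical derivatives (Theorems \ref{th.c1}(a) and \ref{th.4B}(a)), which is precisely what forces $\widetilde{T} = T$. Once that is secured, the remainder is bookkeeping: substituting the known conformal changes and resolving each vector into its horizontal and vertical components.
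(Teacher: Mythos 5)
Your proposal is correct and follows essentially the same route as the paper: both write $\widetilde{D^{c}}_{X}\overline{Y}=\widetilde{\nabla}_{X}\overline{Y}-\widetilde{T}(\widetilde{K}X,\overline{Y})$ via Theorem \ref{th.r5}, substitute the conformal change of $\nabla$ from Theorem \ref{th.c1} and of $K$ from Lemma \ref{lem.c1}, use $K\L X=N\rho X$ together with the conformal invariance of $T$ to get $\omega^{c}=\omega-T(N\rho\,\cdot\,,\cdot)$, and obtain \textbf{(a)}, \textbf{(b)} by setting $X=\gamma\overline{X}$ and $X=\widetilde{\beta}\overline{X}$. The only (harmless) deviation is that the paper cites the invariance $\widetilde{T}=T$ from an earlier work, whereas you rederive it internally from Theorems \ref{th.r5}(a), \ref{th.c1}(a) and \ref{th.4B}(a) — a self-contained touch, but the same argument.
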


\begin{proof} Formula (\ref{eq.2a}) follows from Theorem
\ref{th.r5}, Theorem \ref{th.c1} and Lemma  \ref{lem.c1}, taking
into account the fact that the (h)hv-torsion tensor $T$ is
conformally invariant \cite{r62}.
\par
In more details, \vspace{-0.2cm}
\begin{equation*}
    \left.
    \begin{array}{rcl}
       \widetilde{D^{c}}_{X}\overline{Y} &=&\widetilde{\nabla} _{X}\overline{Y}- \widetilde{T}(\widetilde{K}X,\overline{Y}) \\
       & =& \nabla _{X}\overline{Y} + \omega(X,\overline{Y})-T(KX,\overline{Y}) - T(K \L X,\overline{Y})\\
       & = &D^{c}_{X}\overline{Y}+
{\omega}^{c}(X,\overline{Y}).\vspace{-0.2cm}
         \end{array}
         \right.
\end{equation*}
 Relations  \textbf{(a)} and  \textbf{(b)} follow from (\ref{eq.2a})
by setting $X=\gamma \overline{X}$ and  $X=\widetilde{\beta}
\overline{X}$  respectively.
 \end{proof}

 In view of the above theorem, we have \vspace{-0.2cm}
\begin{thm}\label{tth.6}  Under a Finsler conformal change
$\widetilde{g}=e^{2 \sigma(x)}g $, we have\vspace{-0.2cm}
\begin{description}

  \item[(a)] $\widetilde{S^{c}}(\overline{X},\overline{Y})\overline{Z}=
S^{c}(\overline{X},\overline{Y})\overline{Z}=0,$

  \item[(b)]  $ \widetilde{P^{c}}(\overline{X},\overline{Y})\overline{Z}=
   P^{c}(\overline{X},\overline{Y})\overline{Z} + (D^{c}_{\gamma \overline{Y}}B^{c})(\overline{X},\overline{Z}),$

  \item[(c)]  $\widetilde{R^{c}}(\overline{X}, \overline{Y})\overline{Z} =
  R^{c}(\overline{X},\overline{Y})\overline{Z}
 + \mathfrak{U}_{\overline{X},\overline{Y}} \{(D^{c}_{\gamma N \overline{X}}B^{c})(\overline{Y},\overline{Z})
     - (D^{c}_{\beta
     \overline{X}}B^{c})(\overline{Y},\overline{Z})$\\
     ${\qquad\qquad\quad} +
      P^{c}(\overline{Y}, N \overline{X})\overline{Z}
      -B^{c}( \overline{X},B^{c}(\overline{Y},\overline{Z}))\}.$
 \end{description}
 \end{thm}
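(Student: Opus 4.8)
The plan is to work directly from the definitions of the curvature tensors of the Chern connection, namely $S^{c}(\overline{X},\overline{Y})\overline{Z}=\textbf{K}^{c}(\gamma\overline{X},\gamma\overline{Y})\overline{Z}$, $P^{c}(\overline{X},\overline{Y})\overline{Z}=\textbf{K}^{c}(\beta\overline{X},\gamma\overline{Y})\overline{Z}$ and $R^{c}(\overline{X},\overline{Y})\overline{Z}=\textbf{K}^{c}(\beta\overline{X},\beta\overline{Y})\overline{Z}$, and to feed into them the connection relation of Theorem \ref{th.4}. The two facts I would isolate at the outset are that $\gamma$ is conformally invariant while $\widetilde{\beta}\overline{X}=\beta\overline{X}-\gamma N\overline{X}$ (from Lemma \ref{lem.c1}, since the horizontal deformation equals $\gamma\circ N\circ\rho$ and $\rho\beta=\mathrm{id}$), so that the tilded curvatures are $\widetilde{\textbf{K}^{c}}$ evaluated on $\gamma$ and $\widetilde{\beta}$ arguments; and that the deformation $\omega^{c}$ satisfies $\omega^{c}(\gamma\overline{V},\cdot)=0$ and $\omega^{c}(\beta\overline{V},\cdot)=B^{c}(\overline{V},\cdot)$, i.e. Theorem \ref{th.4}(a),(b). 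Part (a) is then immediate: since $\widetilde{D^{c}}_{\gamma\overline{V}}=D^{c}_{\gamma\overline{V}}$ and the bracket of two vertical fields is again vertical (hence of the form $\gamma(\cdot)$), every term of $\widetilde{\textbf{K}^{c}}(\gamma\overline{X},\gamma\overline{Y})\overline{Z}$ coincides with the untilded one, so $\widetilde{S^{c}}=S^{c}$; and $S^{c}=0$ because $D^{c}$ agrees with $D^{\circ}$ on vertical directions (Theorem \ref{th.r5}(a)) while $S^{\circ}=0$ (Theorem \ref{th.4B}).

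For (b) I would expand $\widetilde{P^{c}}(\overline{X},\overline{Y})\overline{Z}=\widetilde{\textbf{K}^{c}}(\widetilde{\beta}\overline{X},\gamma\overline{Y})\overline{Z}$ by the defining formula for $\textbf{K}^{c}$ and substitute $\widetilde{D^{c}}_{\gamma\overline{Y}}=D^{c}_{\gamma\overline{Y}}$ together with $\widetilde{D^{c}}_{\widetilde{\beta}\overline{X}}\overline{W}=D^{c}_{\beta\overline{X}}\overline{W}-D^{c}_{\gamma N\overline{X}}\overline{W}+B^{c}(\overline{X},\overline{W})$. Writing $[\widetilde{\beta}\overline{X},\gamma\overline{Y}]=[\beta\overline{X},\gamma\overline{Y}]-[\gamma N\overline{X},\gamma\overline{Y}]$ and applying $\widetilde{D^{c}}$ (which differs from $D^{c}$ only through a $B^{c}$--term along the horizontal part $\beta\rho[\beta\overline{X},\gamma\overline{Y}]$), the output splits into a \emph{pure} $D^{c}$ part and a $B^{c}$ part. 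The pure part reassembles as $\textbf{K}^{c}(\beta\overline{X},\gamma\overline{Y})\overline{Z}-\textbf{K}^{c}(\gamma N\overline{X},\gamma\overline{Y})\overline{Z}=P^{c}(\overline{X},\overline{Y})\overline{Z}-S^{c}(N\overline{X},\overline{Y})\overline{Z}$, the second summand vanishing by (a). For the $B^{c}$ part the decisive input is that the Chern connection has vanishing (h)hv-torsion (immediate from Theorem \ref{th.r5}(a)), so that $\rho[\beta\overline{X},\gamma\overline{Y}]=-D^{c}_{\gamma\overline{Y}}\overline{X}$; substituting this collapses the three $B^{c}$ contributions $D^{c}_{\gamma\overline{Y}}(B^{c}(\overline{X},\overline{Z}))-B^{c}(D^{c}_{\gamma\overline{Y}}\overline{X},\overline{Z})-B^{c}(\overline{X},D^{c}_{\gamma\overline{Y}}\overline{Z})$ into exactly $(D^{c}_{\gamma\overline{Y}}B^{c})(\overline{X},\overline{Z})$, which is the claim.

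Part (c) follows the same scheme applied to $\widetilde{\textbf{K}^{c}}(\widetilde{\beta}\overline{X},\widetilde{\beta}\overline{Y})\overline{Z}$, but now the bracket $[\widetilde{\beta}\overline{X},\widetilde{\beta}\overline{Y}]$ must be expanded into the four brackets obtained from $\widetilde{\beta}=\beta-\gamma N$, and each resolved into horizontal and vertical parts via the structure equations of the Barthel connection (its curvature $\widehat{R}$ and (h)h-torsion). This produces the mixed term $P^{c}(\overline{Y},N\overline{X})\overline{Z}$, the two covariant derivatives $(D^{c}_{\gamma N\overline{X}}B^{c})(\overline{Y},\overline{Z})$ and $(D^{c}_{\beta\overline{X}}B^{c})(\overline{Y},\overline{Z})$, and the quadratic term $B^{c}(\overline{X},B^{c}(\overline{Y},\overline{Z}))$, all assembled under the interchange operator $\mathfrak{U}_{\overline{X},\overline{Y}}$.

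I expect this last bookkeeping --- correctly decomposing the four brackets and verifying that roughly a dozen terms collect into the stated cyclic form with no residue --- to be the main obstacle. The organizing observation that shortens the write-up is that the connection relations of Theorem \ref{th.4}(a),(b) for $D^{c}$ are \emph{formally identical} to those of Theorem \ref{th.4B}(a),(b) for $D^{\circ}$, and that $S^{c}=S^{\circ}=0$; hence the entire curvature computation duplicates the Berwald proof term by term, and (b),(c) are obtained from Theorem \ref{th.4B}(b)$'$,(c)$'$ simply by replacing $\circ$ with $c$.
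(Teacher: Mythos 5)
Your proposal is correct and matches the paper's (implicit) argument: the paper offers no written proof beyond ``In view of the above theorem,'' i.e.\ it derives the curvature relations exactly as you do, by substituting the connection relation of Theorem \ref{th.4} into the defining formula for $\textbf{K}^{c}$ in parallel with the Cartan and Berwald computations of Theorems \ref{th.c1} and \ref{th.4B}. Your closing observation --- that Theorem \ref{th.4}(a),(b) are formally identical to Theorem \ref{th.4B}(a),(b), and that $S^{c}=S^{\circ}=0$ together with the vanishing (h)hv- and (h)h-torsions of $D^{c}$ lets the Berwald curvature computation carry over with $\circ$ replaced by $c$ --- is precisely what justifies the paper's one-line proof.
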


\begin{thm}\label{th.h4}  Let $(M,L)$ and $(M,\widetilde {L})$ be  conformally related  Finsler manifolds with
$\widetilde{g}=e^{2 \sigma(x)}g $.  The associated
  Hashiguchi connections  $D^{*}$ and  $\widetilde{D^{*}}$ are related by
 \begin{equation}\label{eq.h2a}
   \widetilde{ D^{*}}_{X}\overline{Y}=D^{*}_{X}\overline{Y}+{\omega}^{*}(X,\overline{Y} ),\vspace{-0.2cm}
 \end{equation}
  \noindent  where\ \ ${\omega}^{*}(X,\overline{Y})=(D^{*}_{\gamma \o Y}N)(\rho X)
  +NT(\overline{Y},\rho X).$\vspace{4pt}\\
 \noindent In particular, we have \vspace{-0.1cm}
\begin{description}

  \item[(a)]$\widetilde{D^{*}}_{\gamma \overline{X}}\overline{Y}=D^{*}_{\gamma \overline{X}}\overline{Y}$

  \item[(b)] $\widetilde{D^{*}}_{\widetilde{\beta} \overline{X} }\overline{Y}=
    D^{*}_{\beta \overline{X}}\overline{Y}- D^{*}_{\gamma N \overline{X}}\o Y
    +B^{*}(\overline{X}, \o Y) $,
\end{description}
where $B^{*}(\overline{X}, \overline{Y}):=\omega^{*}(\beta
\overline{X}, \overline{Y})$.
 \end{thm}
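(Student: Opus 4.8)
The plan is to follow the pattern of the proof of Theorem~\ref{th.4}, but to start from the \emph{Berwald} form of the Hashiguchi connection, which transforms most transparently. By Theorem~\ref{th.t12}, $D^{*}_{X}\overline{Y}=D^{\circ}_{X}\overline{Y}+T(KX,\overline{Y})$. Applying this to the conformally related structure and substituting the three transformation laws already at hand---the Berwald change $\widetilde{D^{\circ}}_{X}\overline{Y}=D^{\circ}_{X}\overline{Y}+\omega^{\circ}(X,\overline{Y})$ of Theorem~\ref{th.4B}, the relation $\widetilde{K}=K+K\circ\L$ of Lemma~\ref{lem.c1}, and the conformal invariance $\widetilde{T}=T$ of the (h)hv-torsion tensor---I would compute
\begin{equation*}
\widetilde{D^{*}}_{X}\overline{Y}=\widetilde{D^{\circ}}_{X}\overline{Y}+\widetilde{T}(\widetilde{K}X,\overline{Y})=D^{\circ}_{X}\overline{Y}+T(KX,\overline{Y})+\omega^{\circ}(X,\overline{Y})+T(K\L X,\overline{Y}).
\end{equation*}
Recognizing $D^{\circ}_{X}\overline{Y}+T(KX,\overline{Y})=D^{*}_{X}\overline{Y}$, this identifies the correction as $\omega^{*}(X,\overline{Y})=\omega^{\circ}(X,\overline{Y})+T(K\L X,\overline{Y})$.

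The substance of the argument is to reduce this to the stated closed form. Since $\L=\gamma\circ N\circ\rho$ and $K\circ\gamma=\mathrm{id}$ on $\pi^{-1}(TM)$ for the canonical connections, I would first use $K\L X=N\rho X$, so that $T(K\L X,\overline{Y})=T(N\rho X,\overline{Y})$. The vertical term $D^{\circ}_{\L X}\overline{Y}=D^{\circ}_{\gamma N\rho X}\overline{Y}$ inside $\omega^{\circ}$ then combines with this torsion term by Theorem~\ref{th.t12}(a) into $D^{*}_{\gamma N\rho X}\overline{Y}$, leaving
\begin{equation*}
\omega^{*}(X,\overline{Y})=K([\gamma\overline{Y},\L]X)+D^{*}_{\gamma N\rho X}\overline{Y}.
\end{equation*}
The hard part is the Fr\"olicher--Nijenhuis-type bracket $K([\gamma\overline{Y},\L]X)$: it must be converted into a covariant expression. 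I would expand $[\gamma\overline{Y},\L]X=[\gamma\overline{Y},\L X]-\L[\gamma\overline{Y},X]$, apply $K$, and systematically trade the resulting Lie brackets for $D^{*}$-covariant derivatives, using $K\gamma=\mathrm{id}$, $\rho\gamma=0$ and the vanishing torsion of the Barthel connection (Theorem~\ref{c.ba.}). I expect $K([\gamma\overline{Y},\L]X)$ together with $D^{*}_{\gamma N\rho X}\overline{Y}$ to reorganize exactly into $(D^{*}_{\gamma\overline{Y}}N)(\rho X)+NT(\overline{Y},\rho X)$, the torsion piece arising from the antisymmetric contribution of the bracket, in parallel with the computation underlying Theorem~\ref{th.4B}.

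Finally, parts (a) and (b) are immediate once one notes that $\omega^{*}(X,\overline{Y})$ depends on $X$ only through $\rho X$. For (a) I set $X=\gamma\overline{X}$; then $\rho X=\rho\gamma\overline{X}=0$, both terms of $\omega^{*}$ vanish, and hence $\widetilde{D^{*}}_{\gamma\overline{X}}\overline{Y}=D^{*}_{\gamma\overline{X}}\overline{Y}$. For (b) I set $X=\widetilde{\beta}\overline{X}$ and use $\widetilde{\beta}=\beta-\L\circ\beta$ from Lemma~\ref{lem.c1}: since $\L\beta\overline{X}=\gamma N\rho\beta\overline{X}=\gamma N\overline{X}$, the unperturbed part gives $D^{*}_{\widetilde{\beta}\overline{X}}\overline{Y}=D^{*}_{\beta\overline{X}}\overline{Y}-D^{*}_{\gamma N\overline{X}}\overline{Y}$, while $\rho\widetilde{\beta}\overline{X}=\rho\beta\overline{X}=\overline{X}$ forces $\omega^{*}(\widetilde{\beta}\overline{X},\overline{Y})=\omega^{*}(\beta\overline{X},\overline{Y})=B^{*}(\overline{X},\overline{Y})$, which yields the claimed formula. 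The only genuinely delicate step is the bracket manipulation of the second paragraph; all remaining steps are direct substitutions of previously established transformation laws.
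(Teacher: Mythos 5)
Your argument is correct, and it reaches the theorem by a genuinely different route than the paper's. The paper works from the \emph{Cartan}-based presentation $D^{*}_{X}\overline{Y}=\nabla_{X}\overline{Y}+\widehat{P}(\rho X,\overline{Y})$ of Theorem \ref{th.t12} and substitutes the conformal change of $\nabla$ together with the change $\widetilde{P}=P+V$ of the $hv$-curvature from Theorem \ref{th.c1}\,$\textbf{(b)}'$: evaluating $V(\rho X,\overline{Y})\overline{\eta}$ with $N=B(\cdot\,,\overline{\eta})$, $\nabla_{\gamma\overline{Y}}\overline{\eta}=\overline{Y}$ and $\widehat{S}=0$ for the Cartan connection, the terms $B(\rho X,\overline{Y})$ cancel and the closed form $(\nabla_{\gamma\overline{Y}}N)(\rho X)+NT(\overline{Y},\rho X)$ drops out in a few lines --- no bracket manipulation ever occurs, because Theorem \ref{th.c1} has already absorbed that work into the tensor $V$. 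Your \emph{Berwald}-based route through Theorem \ref{th.4B} buys independence from the curvature-change results, but at the price of covariantizing the Fr\"olicher--Nijenhuis term $K([\gamma\overline{Y},\L]X)$, which is precisely the step you leave as an expectation rather than a computation. It does close, and here is the missing piece: the term is tensorial in $X$ and vanishes for vertical $X$, so it suffices to take $X=\beta\overline{X}$; then
\begin{equation*}
[\gamma\overline{Y},\L]\beta\overline{X}=[\gamma\overline{Y},\gamma N\overline{X}]-\gamma N\rho[\gamma\overline{Y},\beta\overline{X}],
\end{equation*}
and the two identities $K[\gamma\overline{Y},\gamma\overline{W}]=D^{\circ}_{\gamma\overline{Y}}\overline{W}-D^{\circ}_{\gamma\overline{W}}\overline{Y}$ (a consequence of $S^{\circ}=0$ and $K\circ\gamma=id$, applied to the deflection $K(\cdot)=D^{\circ}_{(\cdot)}\overline{\eta}$) and $\rho[\gamma\overline{Y},\beta\overline{X}]=D^{\circ}_{\gamma\overline{Y}}\overline{X}$ (a consequence of the vanishing of the (h)hv-torsion of the \emph{Berwald} connection) give $K([\gamma\overline{Y},\L]\beta\overline{X})=(D^{\circ}_{\gamma\overline{Y}}N)(\overline{X})-D^{\circ}_{\gamma N\overline{X}}\overline{Y}$. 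Adding your $D^{*}_{\gamma N\overline{X}}\overline{Y}$ and passing from $D^{\circ}_{\gamma\cdot}$ to $D^{*}_{\gamma\cdot}$ by Theorem \ref{th.t12}\,\textbf{(a)} produces $(D^{*}_{\gamma\overline{Y}}N)(\overline{X})+NT(\overline{Y},\overline{X})$, the cross terms $T(N\overline{X},\overline{Y})$ and $-T(\overline{Y},N\overline{X})$ cancelling by the symmetry of the Cartan tensor. One citation to correct: the torsion fact needed here is this property of the Berwald linear connection (together with $S^{\circ}=0$ from Theorem \ref{th.4B}\,$\textbf{(a)}'$), not ``the vanishing torsion of the Barthel connection'' of Theorem \ref{c.ba.} that you invoke. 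Your derivation of \textbf{(a)} and \textbf{(b)} from the closed form of $\omega^{*}$ coincides with the paper's.
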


\begin{proof}
Formula (\ref{eq.h2a}) follows from Theorem \ref{th.t12} and Theorem
\ref{th.c1}$\textbf{(b)}'$. \\
In more details,
\begin{equation*}
    \begin{split}
       \widetilde{D^{*}}_{X}\overline{Y} &=\widetilde{\nabla} _{X}\overline{Y}
       +\widetilde{P}(\rho X ,\overline{Y})\overline{\eta} \\
       & = \nabla _{X}\overline{Y} + B(\rho X,\overline{Y}) +
         P(\rho X ,\overline{Y})\overline{\eta}+ V(\rho X ,\overline{Y})\overline{\eta}\\
         &= D^{*}_{X}\overline{Y}+ B(\rho X,\overline{Y})+\nabla_{\gamma \o Y}B(\rho X,\o \eta)
         -B(\nabla_{\gamma \o Y}\rho X,\o \eta)\\
         &- B(\rho X,\overline{Y})+B(T(\o Y,\rho X),\o \eta)\\
                   &= D^{*}_{X}\overline{Y}+
(\nabla_{\gamma \o Y}N)(\rho X)+NT(\o Y,\rho X).\\
 &= D^{*}_{X}\overline{Y}+
(D^{*}_{\gamma \o Y}N)(\rho X)+NT(\o Y,\rho X).\\
        &= D^{*}_{X}\overline{Y}+
{\omega}^{*}(X,\overline{Y}).
    \end{split}
\end{equation*}
Relation \textbf{(a)} follows from (\ref{eq.h2a}) by setting
$X=\gamma \overline{X}$ noting that $\rho\circ\gamma=0$, whereas
relation \textbf{(b)}
 follows  from the same formula by setting  $X=\widetilde{\beta}
\overline{X}$, noting that $\widetilde{\beta}=\beta-\L \circ \beta$.
 \end{proof}

\begin{thm}\label{Th.6}   Under a Finsler conformal change
$\widetilde{g}=e^{2 \sigma(x)}g $, we have\vspace{-0.2cm}
 \begin{description}
  \item[(a)] $\widetilde{S^{*}}(\overline{X},\overline{Y})\overline{Z}=
{S^{*}}(\overline{X},\overline{Y})\overline{Z}$,

  \item[(b)] $ \widetilde{P^{*}}(\overline{X},\overline{Y})\overline{Z}=
  {P^{*}}(\overline{X},\overline{Y})\overline{Z}-{S^{*}}(N\overline{X}, \overline{Y})
      \overline{Z} + ({D^{*}}_{\gamma \overline{Y}}{B^{*}})(\overline{X},\overline{Z})
     + {B^{*}}(T( \overline{Y},\overline{X}),\overline{Z}),$

  \item [(c)]
 $\widetilde{R^{*}}(\overline{X}, \overline{Y})\overline{Z}
  = {R^{*}}(\overline{X},\overline{Y})\overline{Z}
 +{S^{*}}(N\overline{X} ,N \overline{Y})\overline{Z}
    - \mathfrak{U}_{\overline{X},\overline{Y}} \{{P^{*}}(\overline{X}, N \overline{Y})\overline{Z}
         +(D^{*}_{\beta \overline{X}}B^{*})( \overline{Y},\overline{Z})$ \\
         ${\qquad\qquad\quad} -(D^{*}_{\gamma N\overline{X}}B^{*})( \overline{Y},\overline{Z})
         +B^{*}( \overline{X},B^{*}(\overline{Y},\overline{Z}))-
         B^{*}(T(N \overline{X},\overline{Y}),\overline{Z})\}$.
\end{description}
\end{thm}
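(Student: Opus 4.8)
The plan is to compute each of $\widetilde{S^{*}}$, $\widetilde{P^{*}}$, $\widetilde{R^{*}}$ directly from the classical curvature $\widetilde{\textbf{K}^{*}}(A,B)\overline{Z}=-\widetilde{D^{*}}_{A}\widetilde{D^{*}}_{B}\overline{Z}+\widetilde{D^{*}}_{B}\widetilde{D^{*}}_{A}\overline{Z}+\widetilde{D^{*}}_{[A,B]}\overline{Z}$, feeding in the vertical lift $\gamma$ and the horizontal lift $\widetilde{\beta}$ and substituting the connection transformation from Theorem \ref{th.h4}. Two structural facts drive everything. First, $\gamma$ is metric-independent, whereas by Lemma \ref{lem.c1} the horizontal map obeys $\widetilde{\beta}\overline{X}=\beta\overline{X}-\gamma N\overline{X}$; so I evaluate $\widetilde{S^{*}}$ on $(\gamma\overline{X},\gamma\overline{Y})$, $\widetilde{P^{*}}$ on $(\widetilde{\beta}\overline{X},\gamma\overline{Y})$ and $\widetilde{R^{*}}$ on $(\widetilde{\beta}\overline{X},\widetilde{\beta}\overline{Y})$. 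Second, the correction term $\omega^{*}(X,\overline{Y})=(D^{*}_{\gamma\overline{Y}}N)(\rho X)+NT(\overline{Y},\rho X)$ factors through $\rho X$ and hence vanishes for vertical $X$ (as $\rho\circ\gamma=0$), which is exactly Theorem \ref{th.h4}(a); on horizontal arguments it reduces to $\omega^{*}(\beta\overline{X},\overline{Y})=B^{*}(\overline{X},\overline{Y})$.

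Part (a) is then immediate: since $\widetilde{D^{*}}$ and $D^{*}$ agree on every vertical direction and the vertical distribution is integrable (so $[\gamma\overline{X},\gamma\overline{Y}]$ is again a vertical lift), each of the three terms of $\widetilde{\textbf{K}^{*}}(\gamma\overline{X},\gamma\overline{Y})\overline{Z}$ coincides with the corresponding term computed with $D^{*}$, giving $\widetilde{S^{*}}=S^{*}$.

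For (b) and (c) I would expand the two double-derivative terms with Theorem \ref{th.h4}(a),(b), using $\widetilde{D^{*}}_{\widetilde{\beta}\overline{X}}\overline{Y}=D^{*}_{\beta\overline{X}}\overline{Y}-D^{*}_{\gamma N\overline{X}}\overline{Y}+B^{*}(\overline{X},\overline{Y})$ and $\widetilde{D^{*}}_{\gamma\overline{X}}=D^{*}_{\gamma\overline{X}}$, and then resolve the bracket term into horizontal and vertical parts via the splitting $id=\beta\circ\rho+\gamma\circ K$ together with the torsion identity $\rho[\beta\overline{X},\gamma\overline{Y}]=-D^{*}_{\gamma\overline{Y}}\overline{X}+T(\overline{Y},\overline{X})$ (valid since $D^{*}$ shares the $(h)hv$-torsion $T$). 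After collecting terms, the surviving $D^{*}$-derivatives reassemble into the genuine curvatures $P^{*}$ (resp. $R^{*}$); the insertions $-D^{*}_{\gamma N\overline{X}}$ produce the vertical contributions $-S^{*}(N\overline{X},\overline{Y})\overline{Z}$ in (b) and $+S^{*}(N\overline{X},N\overline{Y})\overline{Z}$ in (c); the covariant derivatives of $B^{*}$ organize into $(D^{*}_{\gamma\overline{Y}}B^{*})$, $(D^{*}_{\beta\overline{X}}B^{*})$ and $(D^{*}_{\gamma N\overline{X}}B^{*})$; the horizontal part $\beta T(\overline{Y},\overline{X})$ of the bracket, after one more use of $\omega^{*}(\beta\overline{W},\overline{Z})=B^{*}(\overline{W},\overline{Z})$, delivers the torsion terms $B^{*}(T(\overline{Y},\overline{X}),\overline{Z})$ and $B^{*}(T(N\overline{X},\overline{Y}),\overline{Z})$; and the quadratic cross terms give $B^{*}(\overline{X},B^{*}(\overline{Y},\overline{Z}))$.

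I expect the main obstacle to be the bookkeeping in part (c), where $[\widetilde{\beta}\overline{X},\widetilde{\beta}\overline{Y}]=[\beta\overline{X}-\gamma N\overline{X},\,\beta\overline{Y}-\gamma N\overline{Y}]$ splits into four Lie brackets, each of which must be resolved into horizontal and vertical parts and passed back through $\widetilde{D^{*}}_{[\cdot,\cdot]}$; the cross brackets involving $\gamma N$ are what generate the $P^{*}(\overline{X},N\overline{Y})$ and $D^{*}_{\gamma N\overline{X}}B^{*}$ contributions, and antisymmetrizing in $\overline{X},\overline{Y}$ is what packages the whole correction under the cyclic operator $\mathfrak{U}_{\overline{X},\overline{Y}}$. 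Rather than grind through this, I would lean on the exact formal parallel with Theorem \ref{th.c1}: the transformation law of $D^{*}$ in Theorem \ref{th.h4} has the same shape as that of the Cartan connection in Theorem \ref{th.c1}(a),(b) (with $\nabla$ and $B$ replaced by $D^{*}$ and $B^{*}$, and $\gamma N\overline{X}$ playing the role of the horizontal shift), so the identical sequence of manipulations that produces the tensors $V$ and $H$ there produces the stated right-hand sides here, with $S,P,R,B$ replaced throughout by $S^{*},P^{*},R^{*},B^{*}$. As a final check I would verify that each right-hand side is tensorial in $\overline{X},\overline{Y},\overline{Z}$, as it must be since both sides are curvature tensors.
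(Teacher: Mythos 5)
Your proposal is correct and is essentially the paper's own route: the paper states Theorem \ref{Th.6} without a written proof, presenting it as an immediate consequence of the transformation law in Theorem \ref{th.h4}, whose shape is identical to that of the Cartan connection in Theorem \ref{th.c1} (with $\widetilde{D^{*}}_{\gamma\overline{X}}=D^{*}_{\gamma\overline{X}}$ and $\widetilde{D^{*}}_{\widetilde{\beta}\overline{X}}\overline{Y}=D^{*}_{\beta\overline{X}}\overline{Y}-D^{*}_{\gamma N\overline{X}}\overline{Y}+B^{*}(\overline{X},\overline{Y})$ replacing the Cartan relations), so the curvature computations of \cite{r62} carry over verbatim with $S,P,R,B$ replaced by $S^{*},P^{*},R^{*},B^{*}$. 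Your direct expansion of the classical curvature using $\widetilde{\beta}\overline{X}=\beta\overline{X}-\gamma N\overline{X}$, the vanishing of $\omega^{*}$ on vertical arguments, and the bracket/torsion resolution is precisely the computation the paper leaves implicit.
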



\Section{Conformal change of special Finsler spaces}

In this section, we  establish \emph{an intrinsic investigation}
of the  conformal change of the most imortant special Finsler
spaces. Moreover, we obtain necessary and sufficient  conditions
for such special Finsler spaces to be conformally invariant.
\par
Throughout this section, $g$, $\widehat{g}$, $\nabla$ and
$D^{\circ}$ denote respectively the Finsler metric on
$\pi^{-1}(TM)$, the induced metric on $\pi^{-1}(T^{*}M)$, the
Cartan connection and the Berwald connection associated with a
Finsler manifold $(M,L)$. Also, $R$, $P$ and $S$ denote
respectively the h-, hv- and v-curvature tensors  of Cartan
connection, whereas $R^{\circ}$, $P^{\circ}$ and $S^{\circ}$
denote respectively the h-, hv- and v-curvature tensors of Berwald
connection. Finally, $T$ denotes the (h)hv-torsion tensor of
Cartan connection.

\bigskip

We first set the intrinsic definitions of the special Finsler spaces
that will be treated. These definitions are quoted from \cite{r86},
where we have made a systematic intrinsic study of special Finsler
spaces.

\begin{defn}\label{def.1a} A Finsler manifold $(M,L)$ is{\em:}
\begin{description}
  \item[(a)]  Riemannian  if the metric tensor $g(x,y)$ is independent
  of $y$ or, equivalently, if
  $$T=0.$$

  \item[(b)] locally Minkowskian  if the metric tensor $g(x,y)$ is independent
  of $x$ or,  equivalently, if\vspace{6pt}
  $$\nabla_{\beta \overline{X}}\,T  =0\,\,\, \text{and}\,\,\,  R=0.$$

  The above conditions are also equivalent to\vspace{6pt}
  $$\widehat{R}=0\,\,\, \text{and}\,\,\,P^{\circ}=0.$$
\end{description}
\end{defn}

\begin{defn}\label{def.1aa} A Finsler manifold $(M,L)$ is said to be{\em:}
\begin{description}

 \item[(a)]  Berwald  if the torsion tensor $T$ is horizontally
  parallel. That is,
  $$\nabla_{\beta \overline{X}}\,T =0.$$

\item[(b)]  $C^h$-recurrent  if the torsion tensor
 $T$  satisfies the condition
  $$\nabla_{\beta \overline{X}}\,T
 =\lambda_{o} (\overline{X})\,T,$$
 \noindent where $\lambda_{o}$ is a $\pi$-form  of order one.
\item[(c)]
  ${P}^{*}$-Finsler manifold
  if the
$\pi$-tensor field  $\nabla_{\beta \overline{\eta}}T$
  is expressed in the form
  $$\nabla_{\beta \overline{\eta}}\,T= \lambda (x,y)\,T,$$
  where $\lambda (x,y)=
  \frac{\widehat{g}(\nabla_{\beta\overline{\eta}}\,C,C)}{C^2}$  and
  $C^2:=\widehat{g}(C,C)=C(\overline{C})\neq 0$.

\end{description}
\end{defn}

\begin{defn}A Finsler manifold $(M,L)$ is said to be{\em:}
\begin{description}

    \item[(a)]  $C^v$-recurrent  if the torsion tensor
 $T$  satisfies the condition
  $$\nabla_{\gamma \overline{X}}T
 =\lambda_{o} (\overline{X}) T.$$

    \item[(b)]  $C^0$-recurrent  if the torsion tensor
 $T$  satisfies the condition
 $$D^{\circ}_{\gamma \overline{X}}T
 =\lambda_{o} (\overline{X}) T.$$

\end{description}
\end{defn}

\begin{defn}\label{3.def.1} A Finsler manifold $(M,L)$ is said to
be{\em:}
\begin{description}

 \item[(a)]  semi-$C$-reducible if $dim\, M \geq 3$ and the
  Cartan tensor $T$ has the form
  \begin{equation*}
  \begin{split}
    T(\overline{X},\overline{Y},\overline{Z})= & \frac{\mu}{n+1} \{\hbar(\overline{X}
  ,\overline{Y})C(\overline{Z})+\hbar(\overline{Y}
  ,\overline{Z})C(\overline{X})+\hbar(\overline{Z},\overline{X}) C(\overline{Y})\} \\
      &+ \frac{\tau}{C^{2}}C(\overline{X}) C(\overline{Y})
  C(\overline{Z}),
  \end{split}
  \end{equation*}
  \end{description}
where $\mu$ and $\tau$ are scalar functions on $TM$ satisfying $\mu
+\tau=1$.
\begin{description}
\item[(b)] $C$-reducible   if $dim\, M \geq 3$ and the
  Cartan tensor $T$ has the form
  \begin{equation*}
      T(\overline{X},\overline{Y},\overline{Z})=  \frac{1}{n+1} \{\hbar(\overline{X}
  ,\overline{Y})C(\overline{Z})+\hbar(\overline{Y}
  ,\overline{Z})C(\overline{X})+\hbar(\overline{Z},\overline{X})
  C(\overline{Y})\}.
   \end{equation*}

\item[(c)] $C_{2}$-like   if $dim\, M \geq 2$ and  the
  Cartan tensor $T$ has the form
  \begin{equation*}
    T(\overline{X},\overline{Y},\overline{Z})= \frac{1}{C^{2}}C(\overline{X}) C(\overline{Y})
  C(\overline{Z}).
  \end{equation*}

  \end{description}
\end{defn}

\begin{defn} \label{3.def.2} A Finsler manifold $(M,L)$, where  $dim\, M \geq 3$, is said to be
  quasi-$C$-reducible
if  the Cartan tensor $T$ is written as{\em:}
  $$T(\overline{X},\overline{Y},\overline{Z})= A(\overline{X}
  ,\overline{Y})C(\overline{Z})+A(\overline{Y}
  ,\overline{Z})C(\overline{X})+A(\overline{Z}
  ,\overline{X})C(\overline{Y}),$$
where $A$ is a symmetric indicatory  $(2)\,\pi$-form
{\em$(A(\overline{X},\overline{\eta})=0$ for all $\overline{X}$)}.
\end{defn}

\begin{defn}\label{3.def.3} A Finsler manifold $(M,L)$ is said to be{\em:}
\begin{description}
\item[(a)]  $S_{3}$-like if $dim\, M\geq 4$ and the vertical curvature tensor
$S$ has the form{\em:}
$$S(\overline{X},\overline{Y},\overline{Z},\overline{W})=
\frac{Sc^{v}}{(n-1)(n-2)} \{
\hbar(\overline{X},\overline{Z})\hbar(\overline{Y},\overline{W})-\hbar(\overline{X},\overline{W})
\hbar(\overline{Y},\overline{Z}) \}.$$

\item[(b)]  $S_{4}$-like  if $dim\, M\geq 5$ and the vertical curvature tensor
$S$ has the form{\em:}
\begin{equation}\label{h}
   \begin{split}
    S(\overline{X},\overline{Y},\overline{Z},\overline{W})=
    &
   \hbar(\overline{X},\overline{Z})\textbf{F}(\overline{Y},\overline{W})
   -\hbar(\overline{Y},\overline{Z})\textbf{F}(\overline{X},\overline{W}) \\
       &+\hbar(\overline{Y},\overline{W})\textbf{F}(\overline{X},\overline{Z})
        -
        \hbar(\overline{X},\overline{W})\textbf{F}(\overline{Y},\overline{Z}),
   \end{split}
\end{equation}
\end{description}
\noindent where $\textbf{F}= {\displaystyle\frac{1}{n-3}\{Ric^v-
\frac{Sc^v\, \hbar}{2(n-2)}\}}$.
\end{defn}

\begin{defn}\label{def.a1} A Finsler manifold $(M,L)$ is said to be
 $S^v$-recurrent  if the $v$-curvature  tensor
 $S$  satisfies the condition
  $$(\nabla_{\gamma \overline{X}}S)(\overline{Y},\overline{Z},\overline{W})
  =\lambda(\overline{X})S(\overline{Y},\overline{Z})\overline{W},$$
 where $\lambda$ is a $\pi$-form of order one.

\end{defn}

\begin{defn}\label{def.2a} A Finsler manifold $(M,L)$ is said to
be{\em:}
\begin{description}
\item[(a)] a Landsberg manifold  if
$$\widehat{P}=0,\,  \text {or equivalently}\,\,
 \nabla_{\beta \overline{\eta}}\,T =0.$$

 \item[(b)] a general Landsberg manifold if $$Tr\{
  \overline{Y} \To \widehat{P}(\overline{X},\overline{Y})\}=0\,\,\,
 \forall\,  \overline{X} \in\cp,
  \text {\,or equivalently \,} \nabla _{\beta \overline{\eta}}\ C =0.$$
  \end{description}
\end{defn}


\begin{defn} \label{p-sy} A Finsler manifold $(M,L)$ is said to be
$P$-symmetric if the mixed curvature  tensor $P$ satisfies
$$P(\overline{X},\overline{Y})\overline{Z}
 =P(\overline{Y},\overline{X})\overline{Z}, \ \ \forall\  \overline{X}, \overline{Y}, \overline{Z}\in\cp  .$$
\end{defn}

\begin{defn}\label{def.p2like}  A Finsler manifold $(M,L)$, where  $dim\, M\geq 3$,
is said to be $P_{2}$-like if the mixed curvature tensor $P$ has the
form{\em:}
$$P(\overline{X},\overline{Y},\overline{Z}, \overline{W})=
 \alpha(\overline{Z})T (\overline{X},\overline{Y},\overline{W})
 -\alpha(\overline{W})\, T(\overline{X},\overline{Y},\overline{Z}),\vspace{-0.2cm}$$ where $\alpha$ is
  a $(1)\,\pi$-form, positively homogeneous of degree $0$.
  \end{defn}

\begin{defn}\label{def.p1} A Finsler manifold $(M,L)$, where  $dim\, M\geq 3$,
is said to be $P$-reducible
  if  the $\pi$-tensor field
 $P(\overline{X},\overline{Y},\overline{Z}):
  =g(\widehat{P}(\overline{X},\overline{Y}),\overline{Z})$ is expressed
  in the form{\em:}
 $$P(\overline{X},\overline{Y},\overline{Z})=\delta(\overline{X})\hbar (\overline{Y},\overline{Z})
  +\delta(\overline{Y})\hbar (\overline{Z},\overline{X})+ \delta(\overline{Z})\hbar
  (\overline{X},\overline{Y}),$$ where $\delta$ is
  the $\pi$-form defined by  $\delta=\frac{1}{n+1}\nabla_{\beta \overline{\eta}}\,C
  .$
\end{defn}


\begin{defn}\label{def.2} A Finsler manifold $(M,L)$, where $\dim M\geq 3$,
is said to be  $h$-isotropic if there exists a scalar $k_{o}$ such
that the horizontal curvature tensor $R$ has the form
$$R(\overline{X},\overline{Y})\overline{Z}=k_{o} \{g(\overline{X},\overline{Z})
\overline{Y}-g(\overline{Y},\overline{Z})\overline{X}\}.$$
\end{defn}

\begin{defn}\label{def.2} A Finsler manifold $(M,L)$, where  $dim\, M\geq 3$, is said to be{\em:}
\begin{description}

 \item[(a)]  of scalar curvature  if there exists a  scalar function $k: \T M \To
 \Real$ such that the horizontal curvature tensor $R$ satisfies
the relation
 $$R(\overline{\eta},\overline{X},\overline{\eta},\overline{Y})=
  k L^{2} \hbar(\overline{X},\overline{Y}).$$

\item[(b)]  of constant curvature  if the function $k$ in
{\emph{\textbf{(a)}}} is constant.
\end{description}
\end{defn}


\begin{defn}\label{def.r3} A Finsler manifold $(M,L)$ is said to be $R_{3}$-like if
 $dim\, M\geq 4$ and the horizontal
curvature tensor $R$ is expressed in the form
\begin{equation}\label{h}
   \begin{split}
    R(\overline{X},\overline{Y},\overline{Z},\overline{W})= &
       g(\overline{X},\overline{Z})F(\overline{Y},\overline{W})
   -g(\overline{Y},\overline{Z})F(\overline{X},\overline{W})  \\
       &+ g(\overline{Y},\overline{W})F (\overline{X},\overline{Z})
       - g(\overline{X},\overline{W})F(\overline{Y},\overline{Z}) ,
   \end{split}
\end{equation}
\end{defn}\vspace{-0.1cm}
\noindent where $F$ is the $(2)\pi$-form defined by $ F={
\frac{1}{n-2}\{ Ric^h- \frac{Sc^h\, g}{2(n-1)}\}}$.

\begin{defn} A Finsler manifold $(M,L)$ is called of perpendicular
scalar {\em({\it simply, $p$-scaler})} curvature if the
$h$-curvature tensor $R$ satisfies the condition
\begin{eqnarray*}
        R(\phi(\overline{X}),\phi(\overline{Y}),\phi(\o Z),\phi(\o W))
          &=&R_{o} \{ \hbar(\overline{X}, \o Z)\hbar(\o Y,\o W)
    - \hbar(\overline{X},\o W)\hbar(\o Y, \o Z) \},
\end{eqnarray*}
where $R_{o}$ is a function on $TM$,  called  perpendicular scaler
curvature, and $\phi$ is the $\pi$-tensor field defined  by $
\phi(\overline{X}):=\overline{X}-L^{-1}\ell(\overline{X})\o \eta$.
\end{defn}

\begin{defn}\label{def.p2} A Finsler manifold $(M,L)$ is called of $s$-$ps$ curvature if
$(M,L)$ is both of scalar curvature and of $p$-scalar curvature.
\end{defn}

\begin{defn} \label{3.def.4}A Finsler manifold $(M,L)$ is said to be  symmetric if
the h-curvature tensor ${R^{\circ}}$  of the Berwald connection
$D^{\circ}$ is horizontally parallel{\em:} $D^{\circ}_{\beta
\overline{X}}{R^{^{\circ}}}=0 .$
\end{defn}

Now, we focus our attention to the change of the above mentioned
 special Finsler manifolds under a conformal transformation
  $g\To \widetilde{g}=e^{2 \sigma(x)}g $.
\textbf{ In what follows we assume that the Finsler manifolds $(M,L)$
 and $(M,\widetilde{L})$ are conformally related.}

\begin{prop}\label{p.3}~\par\vspace{-0.1cm}
\begin{description}
  \item[(a)] $(M,L)$ is  a Riemaniann manifold if, and only if,
   $(M,\widetilde{L})$ is a Riemaniann manifold.

  \item[(b)]Assume that  $ D^{\circ}_{\gamma \overline{X}}{B^{\circ}}=0$
   and $H(\overline{X},\o Y)\o \eta=0$. Then,
   $(M,L)$ is    Locally Minkowskian  if, and only
if, $(M,\widetilde{L})$ is   Locally Minkowskian.
\end{description}
\end{prop}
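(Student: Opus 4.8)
The plan is to prove each equivalence by invoking the characterizations from Definition~\ref{def.1a} together with the conformal transformation formulas already established in Section~2. For part~\textbf{(a)}, I would use the fact that a Finsler manifold is Riemannian precisely when $T=0$. Since the (h)hv-torsion tensor $T$ is conformally invariant (that is, $\widetilde{T}=T$, a fact recorded in \cite{r62} and used in the proof of Theorem~\ref{th.4}), we have $T=0$ if and only if $\widetilde{T}=0$. Thus $(M,L)$ is Riemannian iff $(M,\widetilde{L})$ is Riemannian, and the equivalence is immediate with essentially no computation.

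For part~\textbf{(b)}, I would work with the second characterization of local Minkowski given in Definition~\ref{def.1a}, namely $\widehat{R}=0$ and $P^{\circ}=0$, since these are the curvature conditions most directly controlled by the conformal change formulas. The strategy is to show, under the two stated hypotheses $D^{\circ}_{\gamma\overline{X}}B^{\circ}=0$ and $H(\overline{X},\overline{Y})\overline{\eta}=0$, that $P^{\circ}$ and $\widehat{R}$ are each preserved under the change $g\To\widetilde{g}$. First I would examine $\widehat{P^{\circ}}$, or more directly $P^{\circ}$ itself: by Theorem~\ref{th.4B}$\textbf{(b)}'$ we have $\widetilde{P^{\circ}}(\overline{X},\overline{Y})\overline{Z}=P^{\circ}(\overline{X},\overline{Y})\overline{Z}+(D^{\circ}_{\gamma\overline{Y}}B^{\circ})(\overline{X},\overline{Z})$, so the hypothesis $D^{\circ}_{\gamma\overline{X}}B^{\circ}=0$ forces $\widetilde{P^{\circ}}=P^{\circ}$, and hence $P^{\circ}=0\iff\widetilde{P^{\circ}}=0$.

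Next I would handle the condition $\widehat{R}=0$. Here I would contract the horizontal curvature relation from Theorem~\ref{th.c1}$\textbf{(c)}'$, namely $\widetilde{R}=R+H$, against $\overline{\eta}$ to obtain $\widetilde{\widehat{R}}(\overline{X},\overline{Y})=\widehat{R}(\overline{X},\overline{Y})+H(\overline{X},\overline{Y})\overline{\eta}$. The second hypothesis $H(\overline{X},\overline{Y})\overline{\eta}=0$ then gives $\widetilde{\widehat{R}}=\widehat{R}$, so $\widehat{R}=0\iff\widetilde{\widehat{R}}=0$. Combining the two preserved conditions, the pair $(\widehat{R}=0,\,P^{\circ}=0)$ holds for $(M,L)$ if and only if the corresponding pair holds for $(M,\widetilde{L})$, which by Definition~\ref{def.1a}\textbf{(b)} is exactly the equivalence of local Minkowski structure. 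I would note that one should verify that the two hypotheses are symmetric in the roles of the two manifolds (so the equivalence genuinely runs both ways), but since they assert the vanishing of the correction terms $D^{\circ}_{\gamma\overline{X}}B^{\circ}$ and $H(\overline{X},\overline{Y})\overline{\eta}$ that measure the difference between the two sides, their vanishing makes the relations $\widetilde{P^{\circ}}=P^{\circ}$ and $\widetilde{\widehat{R}}=\widehat{R}$ into genuine identities usable in both directions.

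The main obstacle I anticipate is bookkeeping rather than conceptual: one must be careful that the conformal change formulas, which relate objects of $(M,\widetilde{L})$ to objects of $(M,L)$, are applied consistently, and that the correction tensors $H$ and $B^{\circ}$ appearing in the hypotheses are indeed the ones attached to this particular direction of the change. A subtler point is confirming that vanishing of $H(\overline{X},\overline{Y})\overline{\eta}$ together with $P^{\circ}=0$ on one side really does transport the full local-Minkowski characterization; I would rely on the stated equivalence of the two characterizations in Definition~\ref{def.1a}\textbf{(b)} to avoid having to separately track the first characterization $(\nabla_{\beta\overline{X}}T=0,\,R=0)$, whose conformal behavior would require the more involved formula for $H$ in Theorem~\ref{th.c1}.
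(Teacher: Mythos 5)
Your proposal is correct and follows essentially the same route as the paper: part \textbf{(a)} via the conformal invariance of $T$ and Definition~\ref{def.1a}, and part \textbf{(b)} by using Theorem~\ref{th.4B}$\textbf{(b)}'$ with the hypothesis $D^{\circ}_{\gamma\overline{X}}B^{\circ}=0$ to get $\widetilde{P^{\circ}}=P^{\circ}$, and Theorem~\ref{th.c1}$\textbf{(c)}'$ contracted with $\overline{\eta}$ together with $H(\overline{X},\overline{Y})\overline{\eta}=0$ to get $\widetilde{\widehat{R}}=\widehat{R}$, then invoking the characterization $\widehat{R}=0$, $P^{\circ}=0$ of locally Minkowskian manifolds. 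The paper's proof is just a terser version of exactly this argument.
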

\begin{proof}~\par
\vspace{6pt} \noindent\textbf{(a)} Follows from  Definition
\ref{def.1a} together with
    the fact that the $(h)hv$-torsion tensor $T$ is conformally invariant.

\vspace{6pt} \noindent\textbf{(b)}  By Theorem
\ref{th.c1}$\textbf{(c)}'$ and Theorem \ref{th.4B}$\textbf{(b)}'$,
we get\vspace{-0.1cm}
$$\widetilde{R}(\overline{X},\o Y)\o \eta=R(\overline{X},\o Y)\o \eta, \quad \text{and}   \quad
  \widetilde{P^{\circ}} (\overline{X},\o Y)\o Z=P^{\circ}(\overline{X},\o Y)\o Z. \vspace{-0.1cm}       $$
The result  follows  then from  Definition \ref{def.1a}.
\end{proof}

 Let us introduce the $\pi$-tensor field \vspace{-4pt}
\begin{equation}\label{Eq.A}
   \A (\overline{X},\overline{Y},\overline{Z}):=
   T(U(\beta \overline{X},\overline{Y}), \overline{Z})+ T(U(\beta
  \overline{X},\overline{Z}), \overline{Y})-U(\beta \overline{X},T(\overline{Y},\overline{Z})), \vspace{-4pt}
\end{equation}
  where $U(\beta \overline{X}, \overline{Y}):=B(\overline{X}, \overline{Y})-
  \nabla _{\L \beta
\overline{X}}\overline{Y}$.
  \par
  One can show that the  $\pi$-tensor field
$\A$ has the property that $\A
(\overline{X},\overline{Y},\overline{\eta})= 0$.

\begin{prop}\label{p.4}Assume that the $\pi$-tensor field $\A $ vanishes. Then, $(M,L)$ is a
Berwald {\em({\it resp.  $C^{h}$-recurrent})} manifold if, and only
if, $(M,\widetilde{L})$ is a Berwald {\em({\it resp.
$C^{h}$-recurrent})} manifold.
\end{prop}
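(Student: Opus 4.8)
The plan is to reduce both statements to a single identity relating the horizontal Cartan derivative of the $(h)hv$-torsion tensor in the two conformally related structures. Since by Definition \ref{def.1aa} the Berwald condition reads $\nabla_{\beta \overline{X}}T = 0$ and the $C^h$-recurrent condition reads $\nabla_{\beta \overline{X}}T = \lambda_o(\overline{X})T$, each is governed entirely by the tensor $\nabla_{\beta \overline{X}}T$; so it suffices to express $\widetilde{\nabla}_{\widetilde{\beta}\overline{X}}\widetilde{T}$ in terms of $\nabla_{\beta \overline{X}}T$ and the data in (\ref{Eq.A}).

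First I would assemble the two facts already at hand. By Theorem \ref{th.c1}\textbf{(b)} and the abbreviation $U(\beta \overline{X},\overline{Y}) = B(\overline{X},\overline{Y}) - \nabla_{\L \beta \overline{X}}\overline{Y}$ introduced just before (\ref{Eq.A}), the transformed Cartan connection in a horizontal direction is
$$\widetilde{\nabla}_{\widetilde{\beta}\overline{X}}\overline{Y} = \nabla_{\beta \overline{X}}\overline{Y} + U(\beta \overline{X},\overline{Y}).$$
Second, the $(h)hv$-torsion tensor is conformally invariant, $\widetilde{T}=T$ \cite{r62}, and $T$ is symmetric in its two arguments.

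Then I would expand, via the Leibniz rule for a tensor derivative,
$$(\widetilde{\nabla}_{\widetilde{\beta}\overline{X}}\widetilde{T})(\overline{Y},\overline{Z}) = \widetilde{\nabla}_{\widetilde{\beta}\overline{X}}(T(\overline{Y},\overline{Z})) - T(\widetilde{\nabla}_{\widetilde{\beta}\overline{X}}\overline{Y},\overline{Z}) - T(\overline{Y},\widetilde{\nabla}_{\widetilde{\beta}\overline{X}}\overline{Z}),$$
and substitute the displayed connection formula into all three terms. The $\nabla_{\beta \overline{X}}$-contributions reassemble exactly into $(\nabla_{\beta \overline{X}}T)(\overline{Y},\overline{Z})$, while the three $U$-correction terms collect, after using the symmetry $T(\overline{Y},U(\beta \overline{X},\overline{Z})) = T(U(\beta \overline{X},\overline{Z}),\overline{Y})$, into
$$U(\beta \overline{X},T(\overline{Y},\overline{Z})) - T(U(\beta \overline{X},\overline{Y}),\overline{Z}) - T(U(\beta \overline{X},\overline{Z}),\overline{Y}) = -\A(\overline{X},\overline{Y},\overline{Z}),$$
matching the definition (\ref{Eq.A}). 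This yields the master identity $\widetilde{\nabla}_{\widetilde{\beta}\overline{X}}\widetilde{T} = \nabla_{\beta \overline{X}}T - \A(\overline{X},\cdot,\cdot)$.

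Finally, under the hypothesis $\A = 0$ the identity collapses to $\widetilde{\nabla}_{\widetilde{\beta}\overline{X}}\widetilde{T} = \nabla_{\beta \overline{X}}T$, and both equivalences are immediate: the right-hand side vanishes precisely when the left-hand side does (Berwald), and since $\widetilde{T}=T$, the relation $\nabla_{\beta \overline{X}}T = \lambda_o(\overline{X})T$ holds precisely when $\widetilde{\nabla}_{\widetilde{\beta}\overline{X}}\widetilde{T} = \lambda_o(\overline{X})\widetilde{T}$, so the $C^h$-recurrence transfers with the same recurrence form $\lambda_o$. I expect the only delicate step to be the middle computation: correctly applying the Leibniz rule and deploying the symmetry of $T$ so that the three correction terms assemble into exactly $-\A$ rather than a permuted variant. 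Everything else follows directly from the conformal invariance of $T$ and the horizontal transformation law for the Cartan connection.
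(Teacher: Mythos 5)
Your proposal is correct and follows essentially the same route as the paper: both apply Theorem \ref{th.c1}\textbf{(b)} (via $U(\beta\overline{X},\overline{Y})=B(\overline{X},\overline{Y})-\nabla_{\L\beta\overline{X}}\overline{Y}$) and the conformal invariance of $T$, expand $(\widetilde{\nabla}_{\widetilde{\beta}\overline{X}}\widetilde{T})(\overline{Y},\overline{Z})$ by the Leibniz rule, and collect the correction terms into $-\A(\overline{X},\overline{Y},\overline{Z})$, which is exactly the paper's identity (\ref{eq.8}). The final deductions for the Berwald and $C^h$-recurrent cases under $\A=0$ coincide with the paper's as well.
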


 \begin{proof}
  Using Theorem \ref{th.c1}\textbf{(b)}, taking into account the
 fact that $T$ is conformally invariant,
 we get\vspace{-0.1cm}
\begin{equation*}\label{}
    \begin{split}
      (\widetilde{\nabla}_{\widetilde {\beta}\overline{X}}\widetilde{T})(
      \overline{Y},\overline{Z})& =  \nabla_{\beta \overline{X}}T(\overline{Y},\overline{Z})-
        T(\nabla_{\beta \overline{X}}\overline{Y},\overline{Z})-
        T(\overline{Y},\nabla_{\beta \overline{X}}\overline{Z})\\&
        -\{T(U(\beta \overline{X},\overline{Y}), \overline{Z})+ T(\overline{Y},U(\beta
  \overline{X},\overline{Z}))-U(\beta \overline{X},T(\overline{Y},\overline{Z}))\}.
    \end{split}\vspace{-0.1cm}
\end{equation*}
Consequently,\vspace{-0.1cm}
\begin{equation}\label{eq.8}
(\widetilde{\nabla}_{\widetilde {\beta}\overline{X}}\widetilde{T})(
      \overline{Y},\overline{Z})=(\nabla_{\beta \overline{X}}T)(
      \overline{Y},\overline{Z})- \A
      (\overline{X},\overline{Y},\overline{Z}).\vspace{-0.1cm}
\end{equation}
Hence, under the given assumption, we have\vspace{-0.1cm}
\begin{equation}\label{.eq.8}\widetilde{\nabla}_{\widetilde
{\beta}\overline{X}}\widetilde{T}=\nabla_{\beta
\overline{X}}T.\vspace{-0.1cm}
\end{equation}
 Therefore, $(M,L)$ is Berwald iff $(M,\widetilde{L})$ is  Berwald.
\par
 On the other hand, if $(M,L)$ is $C^{h}$-recurrent, then the
 $(h)hv$-torsion
 tensor $ T$  has the property that
 $\nabla_{\beta \overline{X}}T =\lambda_{o} (\overline{X})
 T$, where $\lambda_{o}$ is a $\pi$-form. \\
  Now, from  (\ref{.eq.8}), we obtain\vspace{-0.1cm}
 $$\widetilde{\nabla}_{\widetilde{\beta }\overline{X}}\widetilde{T}
={\lambda}_{o}(\overline{X}) \widetilde{T}.\vspace{-0.1cm}$$ This
implies that $(M,\widetilde{L})$ is $C^{h}$-recurrent. The converse
can be proved similarly.
\end{proof}

\begin{prop}\label{p.5}
 Assume that the $\pi$-tensor field $\A $ has the property that $i_{\overline{\eta}}\,\A =0 $.
Then, $(M,L)$ is a ${P}^*$-Finsler manifold  if, and only if,
$(M,\widetilde{L})$ is a ${P}^*$-Finsler manifold.
\end{prop}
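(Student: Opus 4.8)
The plan is to parallel the proof of Proposition~\ref{p.4} but now track the deflected torsion along the canonical section $\overline{\eta}$ rather than the full tensor. Recall from Definition~\ref{def.1aa}\textbf{(c)} that $(M,L)$ is a $P^*$-manifold precisely when $\nabla_{\beta\overline{\eta}}T=\lambda(x,y)\,T$ with $\lambda=\widehat{g}(\nabla_{\beta\overline{\eta}}C,C)/C^2$. So the statement is really about the single $\pi$-tensor field $\nabla_{\beta\overline{\eta}}T$ together with the scalar $\lambda$, and I must show both transform compatibly under the conformal change once $i_{\overline{\eta}}\A=0$ is imposed.

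First I would specialize the master transformation formula~(\ref{eq.8}) from the proof of Proposition~\ref{p.4} by setting $\overline{X}=\overline{\eta}$. This gives
\begin{equation*}
(\widetilde{\nabla}_{\widetilde{\beta}\overline{\eta}}\widetilde{T})(\overline{Y},\overline{Z})
=(\nabla_{\beta\overline{\eta}}T)(\overline{Y},\overline{Z})-\A(\overline{\eta},\overline{Y},\overline{Z}),
\end{equation*}
and the hypothesis $i_{\overline{\eta}}\A=0$ kills the correction term, so that $\widetilde{\nabla}_{\widetilde{\beta}\overline{\eta}}\widetilde{T}=\nabla_{\beta\overline{\eta}}T$ exactly, using that $T$ is conformally invariant (\cite{r62}). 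Next I would handle the scalar recurrence factor: I need to show that the candidate function $\widetilde{\lambda}=\widehat{\widetilde{g}}(\widetilde{\nabla}_{\widetilde{\beta}\overline{\eta}}\widetilde{C},\widetilde{C})/\widetilde{C}^{2}$ coincides with $\lambda$. Here I would invoke the conformal behavior of $C$ and of the induced metric $\widehat{g}$ on $\pi^{-1}(T^*M)$: since $\widetilde{g}=e^{2\sigma}g$ the contracted torsion $C$ is conformally invariant (being a trace of the invariant $T$), while $\widehat{g}$ scales by $e^{-2\sigma}$; the factors of $e^{2\sigma}$ in numerator and the $C^2$ denominator then cancel, leaving $\widetilde{\lambda}=\lambda$.

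With these two facts in hand the equivalence is immediate: if $(M,L)$ is $P^*$, then $\nabla_{\beta\overline{\eta}}T=\lambda T$, and applying $\widetilde{\nabla}_{\widetilde{\beta}\overline{\eta}}\widetilde{T}=\nabla_{\beta\overline{\eta}}T$ together with $\widetilde{T}=T$ and $\widetilde{\lambda}=\lambda$ yields $\widetilde{\nabla}_{\widetilde{\beta}\overline{\eta}}\widetilde{T}=\widetilde{\lambda}\,\widetilde{T}$, so $(M,\widetilde{L})$ is $P^*$; the converse is symmetric. The step I expect to be the main obstacle is verifying $\widetilde{\lambda}=\lambda$ cleanly, since this requires pinning down the exact conformal weights of $C$, $\widehat{g}$, and the horizontal derivative $\nabla_{\beta\overline{\eta}}C$ and confirming they conspire so the recurrence scalar is genuinely invariant rather than merely $\sigma$-invariant. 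One subtlety worth checking is whether $\nabla_{\beta\overline{\eta}}C$ picks up an extra term under the change; I would control this by contracting the already-established invariance $\widetilde{\nabla}_{\widetilde{\beta}\overline{\eta}}\widetilde{T}=\nabla_{\beta\overline{\eta}}T$ (taking the appropriate trace), which should force $\widetilde{\nabla}_{\widetilde{\beta}\overline{\eta}}\widetilde{C}=\nabla_{\beta\overline{\eta}}C$ and thereby close the argument without any further direct computation of $\L$ or $B$.
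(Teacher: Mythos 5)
Your proposal is correct and follows essentially the same route as the paper's proof: specialize (\ref{eq.8}) at $\overline{X}=\overline{\eta}$, use $i_{\overline{\eta}}\,\A=0$ to get $\widetilde{\nabla}_{\widetilde{\beta}\overline{\eta}}\widetilde{T}=\nabla_{\beta\overline{\eta}}T$, trace to obtain invariance of $\nabla_{\beta\overline{\eta}}C$, and combine with $\widetilde{C}=C$ and the cancellation of the $\widehat{g}$-weights to conclude $\widetilde{\lambda}=\lambda$. Your treatment is merely more explicit about the conformal weights than the paper's, which states the same facts more tersely.
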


\begin{proof}
 From  relation (\ref{eq.8}), we have\vspace{-0.1cm}
$$\nabla_{\beta \o \eta}T=\widetilde{\nabla}_{\widetilde{\beta}\o \eta}\widetilde{T}.\vspace{-0.1cm}$$
 Hence, the
$\pi$-tensor field  $\nabla_{\beta \o \eta}C$ is conformally
invariant. This, together with the fact that $\widetilde{C}=C$,
imply that the scalar function $\lambda(x,y)$ defined by
 $\lambda(x,y):=\frac{\widehat{g}(\nabla_{\beta \o \eta}C, C)}{\widehat{g}(C,C)} $
 is  also conformally invariant. Hence the result.
\end{proof}

\begin{prop}\label{.p.4} A Finsler manifold  $(M,L)$ is  $C^{v}$-recurrent
{\em({\it resp.  $C^{0}$-recurrent})} if, and only if,
$(M,\widetilde{L})$ is   $C^{v}$-recurrent {\em({\it resp.
$C^{0}$-recurrent})}.
\end{prop}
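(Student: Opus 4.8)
The plan is to reduce both equivalences to a single observation: the \emph{vertical} covariant derivative of $T$ is conformally invariant, and this requires no auxiliary hypothesis whatsoever (in sharp contrast to Propositions \ref{p.4} and \ref{p.5}). Two facts furnish everything. First, the $(h)hv$-torsion tensor is conformally invariant, $\widetilde{T}=T$, as established in \cite{r62} and used repeatedly above. Second, the purely vertical parts of both fundamental connections are conformally invariant: $\widetilde{\nabla}_{\gamma \overline{X}}\overline{Y}=\nabla_{\gamma \overline{X}}\overline{Y}$ by Theorem \ref{th.c1}\textbf{(a)}, and $\widetilde{D^{\circ}}_{\gamma \overline{X}}\overline{Y}=D^{\circ}_{\gamma \overline{X}}\overline{Y}$ by Theorem \ref{th.4B}\textbf{(a)}.

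For the $C^{v}$-recurrent case I would expand the tensor derivative and substitute these invariances. Writing out
$$(\widetilde{\nabla}_{\gamma \overline{X}}\widetilde{T})(\overline{Y},\overline{Z})
=\widetilde{\nabla}_{\gamma \overline{X}}\bigl(\widetilde{T}(\overline{Y},\overline{Z})\bigr)
-\widetilde{T}(\widetilde{\nabla}_{\gamma \overline{X}}\overline{Y},\overline{Z})
-\widetilde{T}(\overline{Y},\widetilde{\nabla}_{\gamma \overline{X}}\overline{Z}),$$
and replacing $\widetilde{T}$ by $T$ and $\widetilde{\nabla}_{\gamma \overline{X}}$ by $\nabla_{\gamma \overline{X}}$ throughout, each of the three terms collapses onto its untilded counterpart, so that
$$\widetilde{\nabla}_{\gamma \overline{X}}\widetilde{T}=\nabla_{\gamma \overline{X}}T.$$
Because $\widetilde{T}=T$ as well, the recurrence $\nabla_{\gamma \overline{X}}T=\lambda_{o}(\overline{X})\,T$ holds for $(M,L)$ precisely when $\widetilde{\nabla}_{\gamma \overline{X}}\widetilde{T}=\lambda_{o}(\overline{X})\,\widetilde{T}$ holds for $(M,\widetilde{L})$, with the very same recurrence form $\lambda_{o}$; this is exactly the asserted $C^{v}$-recurrence equivalence.

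The $C^{0}$-recurrent case is verbatim the same, with $\nabla$ replaced by the Berwald connection $D^{\circ}$ and Theorem \ref{th.4B}\textbf{(a)} invoked in place of Theorem \ref{th.c1}\textbf{(a)}, giving $\widetilde{D^{\circ}}_{\gamma \overline{X}}\widetilde{T}=D^{\circ}_{\gamma \overline{X}}T$ and hence the equivalence of the two recurrence conditions. I anticipate no genuine obstacle: a conformal change acts only in the horizontal directions through $\L=\gamma\circ N\circ\rho$, which annihilates vertical arguments since $\rho\circ\gamma=0$ forces $\L(\gamma \overline{X})=0$. Consequently the purely vertical differentiation appearing in the $C^{v}$- and $C^{0}$-recurrence conditions is left completely untouched, which is precisely why no analogue of the correction tensor $\A$—the object that obstructed the Berwald and $C^{h}$-recurrent statements of Proposition \ref{p.4}—is required here.
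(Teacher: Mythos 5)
Your proposal is correct and follows essentially the same route as the paper: the paper's proof likewise rests on exactly the two invariances you isolate, namely $\widetilde{T}=T$ and $\widetilde{\nabla}_{\gamma\overline{X}}=\nabla_{\gamma\overline{X}}$ (resp.\ $\widetilde{D^{\circ}}_{\gamma\overline{X}}=D^{\circ}_{\gamma\overline{X}}$), concluding $\widetilde{\nabla}_{\gamma\overline{X}}\widetilde{T}=\lambda_{o}(\overline{X})\widetilde{T}$ with the same $\pi$-form $\lambda_{o}$. Your explicit expansion of the tensor derivative and the closing remark that $\L(\gamma\overline{X})=0$ (whence no analogue of $\A$ arises) merely make explicit what the paper leaves implicit.
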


\begin{proof}
If $(M,L)$ is $C^{v}$-recurrent, then the $(h)hv$-torsion tensor $
T$  has the form  $\nabla_{\gamma \overline{X}}T =\lambda_{o}
(\overline{X})
 T$, where $\lambda_{o}$ is a $\pi$-form.
 Since the map
 $\nabla_{\gamma \overline{X}}: \o Y\longmapsto\nabla_{\gamma \overline{X}} \o Y$
  and the torsion tensor $T$ are conformally invariant, it follows that \vspace{-0.1cm}
 $$\widetilde{\nabla}_{\gamma\overline{X}}\widetilde{T}
={\lambda}_{o}(\overline{X}) \widetilde{T}.\vspace{-0.1cm}$$
 This implies that $(M,\widetilde{L})$ is $C^{v}$-recurrent.
The converse is proved similarly.
\par
The same argument can be applied to the $C^{0}$-recurrence
property.
\end{proof}

\begin{prop}\label{p.6} A Finsler manifold $(M,L)$ is semi-$C$-reducible
  if, and only if, $(M,\widetilde{L})$ is   semi-$C$-reducible.
  Consequently, $(M,L)$ is $C$-reducible {\em({\it resp. $C_{2}$-like})}
  if, and only if, $(M,\widetilde{L})$ is
    $C$-reducible {\em({\it resp. $C_{2}$-like})}.
    \end{prop}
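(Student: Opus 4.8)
The plan is to reduce the whole statement to the conformal scaling behaviour of the four ingredients that build the semi-$C$-reducible form of Definition \ref{3.def.1}: the Cartan tensor $T(\overline{X},\overline{Y},\overline{Z})$, the angular metric $\hbar$, the contracted torsion $C$, and the scalar $C^{2}$. Once these four scaling laws are recorded, I would substitute them into the defining expression and observe that the two structural terms scale by exactly the same factor, so that the semi-$C$-reducible form is carried to itself with \emph{unchanged} scalar functions $\mu,\tau$; the two special cases then follow by specialization.

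First I would collect the transformation rules. Since the vector-valued $(h)hv$-torsion tensor is conformally invariant and $\widetilde{g}=e^{2\sigma}g$, the Cartan tensor satisfies $\widetilde{T}(\overline{X},\overline{Y},\overline{Z})=\widetilde{g}(T(\overline{X},\overline{Y}),\overline{Z})=e^{2\sigma}\,T(\overline{X},\overline{Y},\overline{Z})$. From $\widetilde{L}^{2}=e^{2\sigma}L^{2}$ together with $\widetilde{g}=e^{2\sigma}g$ one finds $\widetilde{\ell}=e^{\sigma}\ell$, whence $\widetilde{\hbar}=\widetilde{g}-\widetilde{\ell}\otimes\widetilde{\ell}=e^{2\sigma}\hbar$. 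The contracted torsion is the trace of a conformally invariant map, so $\widetilde{C}=C$; dualizing $g(\overline{C},\overline{X})=C(\overline{X})$ through $\widetilde{g}=e^{2\sigma}g$ yields $\widetilde{\overline{C}}=e^{-2\sigma}\overline{C}$, and therefore $\widetilde{C}^{2}=\widetilde{C}(\widetilde{\overline{C}})=e^{-2\sigma}C^{2}$.

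Next I would substitute these into the semi-$C$-reducible expression for $\widetilde{T}$. The $\mu$-term is built from $\hbar$ and $C$, so it scales by $e^{2\sigma}$; the $\tau$-term is $C(\overline{X})C(\overline{Y})C(\overline{Z})/C^{2}$, in which the numerator is invariant and $1/C^{2}$ scales by $e^{2\sigma}$, so it too scales by $e^{2\sigma}$. Both terms thus scale by the \emph{same} factor $e^{2\sigma}$ that governs $\widetilde{T}$, so comparing coefficients forces $\widetilde{\mu}=\mu$ and $\widetilde{\tau}=\tau$; in particular $\widetilde{\mu}+\widetilde{\tau}=\mu+\tau=1$, and $(M,\widetilde{L})$ is again semi-$C$-reducible. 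Replacing $\sigma$ by $-\sigma$ gives the converse, establishing the equivalence. For the consequential statements, $C$-reducibility is the case $\mu=1,\ \tau=0$ and $C_{2}$-likeness the case $\mu=0,\ \tau=1$ of the same form; since the computation preserves the scalar functions exactly, each special form is carried to itself (the $C_{2}$-like case may be verified directly by the same substitution, which also covers the range $\dim M\geq 2$).

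The main obstacle is simply getting the co-metric scaling right, namely $\widetilde{C}^{2}=e^{-2\sigma}C^{2}$: it is precisely this inverse factor that makes the $\tau$-term scale by the same $e^{2\sigma}$ as the $\mu$-term, so that the decomposition is rigid and the scalars cannot change. Had $C^{2}$ been (incorrectly) treated as conformally invariant, the two terms would scale differently and the argument would break down.
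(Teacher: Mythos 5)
Your proposal is correct and follows essentially the same route as the paper: record that $\widetilde{T}(\overline{X},\overline{Y},\overline{Z})=e^{2\sigma}T(\overline{X},\overline{Y},\overline{Z})$, $\widetilde{\hbar}=e^{2\sigma}\hbar$, $\widetilde{C}=C$ and $\widetilde{C}^{2}=e^{-2\sigma}C^{2}$, then substitute into the semi-$C$-reducible form and observe both terms scale by the same factor $e^{2\sigma}$, so $\mu,\tau$ are unchanged and the special cases follow. The only cosmetic difference is that you obtain $\widetilde{C}^{2}=e^{-2\sigma}C^{2}$ via the dual vector field $\overline{C}$ (i.e.\ $C^{2}=C(\overline{C})$), while the paper uses the induced metric $\widehat{g}$ on $\pi^{-1}(T^{*}M)$ with $\widetilde{\widehat{g}}=e^{-2\sigma}\widehat{g}$; these are the same computation in view of $C^{2}:=\widehat{g}(C,C)=C(\overline{C})$.
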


\begin{proof}
 The  semi-$C$-reducibility property is expressed as \vspace{-0.1cm}
 \begin{equation*}
    T(\overline{X},\overline{Y},\overline{Z})=
    \frac{{\mu}}{n+1} \mathfrak{S}_{\overline{X},\o Y,\o Z}\{{\hbar}(\overline{X}
  ,\overline{Y}){C}(\overline{Z})\}+ \frac{\tau}{C^{2}}C(\overline{X}) C(\overline{Y})
  C(\overline{Z}),
 \vspace{-0.1cm}
  \end{equation*}
where $\mu$ and $\tau$ are scalar functions  satisfying
$\mu+\tau=1$ and the symbol $\mathfrak{S}_{\overline{X},\o Y,\o Z}$ denotes cyclic sum over $\overline{X},\o Y$ and $\o Z$  . \\
Since
$\widetilde{C}^{2}:=\widetilde{\widehat{g}}(\widetilde{C},\widetilde{C})=e^{-2
 \sigma}\widehat{g}(C,C)=e^{-2
 \sigma}{C}^{2}$,  $\widetilde{T}(\overline{X},\overline{Y},\overline{Z})= e^{2 \sigma
        (x)}T(\overline{X},\overline{Y},\overline{Z})$ and
the angular metric tensor $\hbar$ is conformally $\sigma$-invariant,
the above relation is equivalent to \vspace{-0.1cm}
 \begin{equation*}
     \widetilde{T}(\overline{X},\overline{Y},\overline{Z})=
     \frac{{\mu}}{n+1} \mathfrak{U}_{\overline{X},\o Y,\o Z}\{\widetilde{\hbar}(\overline{X}
  ,\overline{Y})\widetilde{C}(\overline{Z})\}+ \frac{\tau}{\widetilde{C}^{2}}
  \widetilde{C}(\overline{X}) \widetilde{C}(\overline{Y})
  \widetilde{C}(\overline{Z}) .\vspace{-0.1cm}
   \end{equation*}
Hence, the  semi-$C$-reducibility property is preserved.
 \par
 Finally, the proof of the cases of $C$-reducibility and  $C_{2}$-likeness
is similar.
\end{proof}

\begin{prop}\label{p.7} A Finsler manifold $(M,L)$ is quasi-$C$-reducible
 if, and only if, $(M,\widetilde{L})$ is  quasi-$C$-reducible.
\end{prop}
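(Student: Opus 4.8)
The plan is to mimic the structure of the preceding propositions, which all reduce a ``special Finsler space'' property to the conformal behavior of the defining tensorial identity. For quasi-$C$-reducibility the defining relation is
$$T(\overline{X},\overline{Y},\overline{Z})= A(\overline{X},\overline{Y})C(\overline{Z})+A(\overline{Y},\overline{Z})C(\overline{X})+A(\overline{Z},\overline{X})C(\overline{Y}),$$
with $A$ a symmetric indicatory $(2)$-$\pi$-form. The key facts I would invoke are exactly the conformal transformation rules already established: the Cartan tensor is conformally $\sigma$-invariant, so $\widetilde{T}(\overline{X},\overline{Y},\overline{Z})=e^{2\sigma(x)}T(\overline{X},\overline{Y},\overline{Z})$, and the contracted torsion satisfies $\widetilde{C}=C$ (used in Proposition~\ref{p.6}).

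First I would assume $(M,L)$ is quasi-$C$-reducible and write out the defining identity. The natural move is to \emph{define} a candidate form $\widetilde{A}$ for the transformed manifold, namely $\widetilde{A}:=e^{2\sigma(x)}A$, and then verify that $\widetilde{T}$ decomposes in the required way with this $\widetilde{A}$. Multiplying the defining identity through by $e^{2\sigma(x)}$ and using $\widetilde{C}=C$ gives
$$\widetilde{T}(\overline{X},\overline{Y},\overline{Z})= \widetilde{A}(\overline{X},\overline{Y})\widetilde{C}(\overline{Z})+\widetilde{A}(\overline{Y},\overline{Z})\widetilde{C}(\overline{X})+\widetilde{A}(\overline{Z},\overline{X})\widetilde{C}(\overline{Y}),$$
which is precisely the quasi-$C$-reducibility of $(M,\widetilde{L})$ provided $\widetilde{A}$ is again a symmetric indicatory $(2)$-$\pi$-form.

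The two remaining checks are routine but must be stated: symmetry of $\widetilde{A}$ is immediate since $\widetilde{A}=e^{2\sigma(x)}A$ is a scalar multiple of the symmetric $A$, and the indicatory condition $\widetilde{A}(\overline{X},\overline{\eta})=e^{2\sigma(x)}A(\overline{X},\overline{\eta})=0$ follows from the indicatory condition on $A$. The converse direction is entirely symmetric: one runs the same computation with the roles of $g$ and $\widetilde{g}=e^{2\sigma(x)}g$ interchanged, noting that the inverse change is also conformal with factor $e^{-2\sigma(x)}$, so I would simply remark that the converse is proved similarly.

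I do not expect a genuine obstacle here, since the argument hinges only on the already-cited invariances (the $\sigma$-invariance of $T$ and the invariance of $C$), which make the defining identity manifestly conformally covariant. The only point requiring a word of care is that one must exhibit a concrete $\widetilde{A}$ rather than merely assert the form is preserved --- the content of the proof is the identification $\widetilde{A}=e^{2\sigma(x)}A$ together with the verification that this candidate inherits symmetry and the indicatory property. Everything else follows by direct substitution.
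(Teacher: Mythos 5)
Your proof is correct and is essentially the paper's own argument: the paper states Proposition \ref{p.7} without proof, leaving it as the immediate analogue of Proposition \ref{p.6}, and your computation (multiplying the defining identity by $e^{2\sigma(x)}$, using $\widetilde{T}(\overline{X},\overline{Y},\overline{Z})=e^{2\sigma(x)}T(\overline{X},\overline{Y},\overline{Z})$ and $\widetilde{C}=C$, and exhibiting $\widetilde{A}:=e^{2\sigma(x)}A$) is precisely that analogue. Your explicit check that $\widetilde{A}$ remains symmetric and indicatory, the latter using that $\overline{\eta}$ is independent of the metric, is a detail the paper leaves implicit, so nothing is missing.
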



\begin{thm}\label{th.8}A necessary and sufficient condition for a Finsler  manifold to be conformal to a
Landsberg manifold is that\vspace{-0.1cm}
$$\widehat{P}=i_{\overline{\eta}}\,\A.$$
\end{thm}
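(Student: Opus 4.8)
The plan is to translate the Landsberg condition of the conformally related manifold $(M,\widetilde{L})$ into a condition on the original data of $(M,L)$, using the change formula for the horizontal Cartan derivative of the torsion that was already established in the proof of Proposition \ref{p.4}. By Definition \ref{def.2a}\textbf{(a)}, asserting that $(M,L)$ is conformal to a Landsberg manifold means precisely that the (fixed) conformally related manifold $(M,\widetilde{L})$ is itself Landsberg, i.e. $\widetilde{\widehat{P}}=0$, or equivalently $\widetilde{\nabla}_{\widetilde{\beta}\,\o\eta}\,\widetilde{T}=0$. So the goal reduces to expressing $\widetilde{\nabla}_{\widetilde{\beta}\,\o\eta}\,\widetilde{T}$ in terms of objects attached to $(M,L)$ and then recognizing the result as a statement about $\widehat{P}$.

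The key input is relation (\ref{eq.8}), namely
$$(\widetilde{\nabla}_{\widetilde{\beta}\,\overline{X}}\widetilde{T})(\overline{Y},\overline{Z})=(\nabla_{\beta \overline{X}}T)(\overline{Y},\overline{Z})-\A(\overline{X},\overline{Y},\overline{Z}).$$
Specializing to $\overline{X}=\o\eta$ and recalling that $\A(\o\eta,\overline{Y},\overline{Z})=(i_{\o\eta}\,\A)(\overline{Y},\overline{Z})$, I obtain
$$\widetilde{\nabla}_{\widetilde{\beta}\,\o\eta}\,\widetilde{T}=\nabla_{\beta \o\eta}T-i_{\o\eta}\,\A.$$
Hence $(M,\widetilde{L})$ is Landsberg if and only if $\nabla_{\beta \o\eta}T=i_{\o\eta}\,\A$. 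To finish I would invoke the standard identity for the Cartan connection that underlies the equivalence quoted in Definition \ref{def.2a}\textbf{(a)}, namely $\widehat{P}=\nabla_{\beta \o\eta}T$ as $\pi$-tensor fields; substituting this into the displayed criterion gives the asserted condition $\widehat{P}=i_{\o\eta}\,\A$, and since every step is reversible, both directions follow simultaneously.

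The only genuinely delicate point is that last substitution: the argument requires the full tensorial identity $\widehat{P}=\nabla_{\beta \o\eta}T$, whereas Definition \ref{def.2a}\textbf{(a)} only records the equivalence of the two \emph{vanishing} conditions. I would therefore either cite this commutation identity from the systematic intrinsic study \cite{r86} (or from \cite{r94}), or derive it directly from the structure equations of $\nabla$. In doing so I would be careful that the slot into which $\o\eta$ is inserted when forming $i_{\o\eta}\,\A$ is exactly the first (``horizontal'') slot of $\A$ that receives $\overline{X}=\o\eta$ in (\ref{eq.8}), so that the contraction $i_{\o\eta}\,\A$ is literally the same object on both sides; with the conventions fixed above this matching is automatic, and no further computation is needed.
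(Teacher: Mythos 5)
Your proposal is correct and takes essentially the same route as the paper: both specialize relation (\ref{eq.8}) at $\overline{X}=\o\eta$ and combine it with the full tensorial identity $\widehat{P}=\nabla_{\beta\o\eta}T$, which the paper indeed invokes as an established result (cited from \cite{r96} rather than \cite{r86} or \cite{r94}), so the ``delicate point'' you flag is handled exactly as you anticipated and no further argument is needed.
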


\begin{proof}
We have \cite{r96}
 $$ \widehat{P}=\nabla_{\beta
\o \eta}T.$$ From which, together with (\ref{eq.8}), we obtain
\begin{equation}\label{5.eq.2}
     \widetilde{\widehat{P}}-\widehat{P}
     =\widetilde{\nabla}_{\widetilde{\beta} \o \eta}\widetilde{T}
     -\nabla_{\beta \o \eta}T=-i_{\overline{\eta}}\,\A.\vspace{-0.1cm}
\end{equation}
Hence, the result follows.
\end{proof}

\vspace{7pt}

Let us define the $\pi$-tensor field
\begin{equation}\label{Ao}
   \A_{o}(\overline{X}):= Tr\{\o Y \longmapsto
   (i_{\overline{\eta}}\,\A)(\overline{X},\overline{Y})\},
\end{equation}
where $\A$ is the $\pi$-tensor field defined by (\ref{Eq.A}).
\begin{prop}\label{p.8}~\par\vspace{-0.1cm}
\begin{description}
  \item[(a)] Assume that $i_{\overline{\eta}}\,\A=0$.
   Then,  $(M,L)$ is  Landsberg  if, and only if, $(M,\widetilde{L})$
is Landsberg.

 \item[(b)] Assume that $\A_{o}=0$.
  Then,  $(M,L)$ is  general Landsberg  if, and only if, $(M,\widetilde{L})$ is
 general Landsberg.

\end{description}
\end{prop}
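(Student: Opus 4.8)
The plan is to reduce everything to the conformal change formula for the horizontal covariant derivative of the Cartan tensor, already packaged as equation~(\ref{eq.8}), and then to contract it appropriately. Recall that Proposition~\ref{p.8} concerns the Landsberg and general Landsberg properties, which by Definition~\ref{def.2a} are governed respectively by $\widehat{P}=0$ (equivalently $\nabla_{\beta\overline{\eta}}T=0$) and by $\mathrm{Tr}\{\overline{Y}\mapsto\widehat{P}(\overline{X},\overline{Y})\}=0$ (equivalently $\nabla_{\beta\overline{\eta}}C=0$). The key prior result is Theorem~\ref{th.8}, whose proof yields the clean identity (\ref{5.eq.2}), namely $\widetilde{\widehat{P}}-\widehat{P}=-i_{\overline{\eta}}\,\A$. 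So the entire mechanism is already in place; what remains is to read off the two special cases under the stated vanishing hypotheses.

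For part~\textbf{(a)}, I would argue directly from (\ref{5.eq.2}). Under the assumption $i_{\overline{\eta}}\,\A=0$, that identity collapses to $\widetilde{\widehat{P}}=\widehat{P}$, i.e. the $(v)hv$-torsion $\widehat{P}$ is conformally invariant. Then $(M,L)$ is Landsberg, i.e. $\widehat{P}=0$, if and only if $\widetilde{\widehat{P}}=0$, i.e. $(M,\widetilde{L})$ is Landsberg, by Definition~\ref{def.2a}\textbf{(a)}. This is essentially immediate once Theorem~\ref{th.8} is in hand.

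For part~\textbf{(b)}, the general Landsberg condition is the vanishing of the trace of $\overline{Y}\mapsto\widehat{P}(\overline{X},\overline{Y})$, which by Definition~\ref{def.2a}\textbf{(b)} equals $\nabla_{\beta\overline{\eta}}C$. The plan is to take the trace of (\ref{5.eq.2}) in the appropriate slot. Writing $\widetilde{C}$ for the contracted torsion associated with $\widetilde{L}$ and using that the contracted torsion is conformally invariant ($\widetilde{C}=C$, as already noted in the proof of Proposition~\ref{p.5}), the trace of $\widetilde{\widehat{P}}-\widehat{P}$ produces exactly $\widetilde{\nabla}_{\widetilde{\beta}\overline{\eta}}C-\nabla_{\beta\overline{\eta}}C=-\A_{o}$, where $\A_{o}$ is the trace defined in (\ref{Ao}). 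Under the hypothesis $\A_{o}=0$, this gives $\widetilde{\nabla}_{\widetilde{\beta}\overline{\eta}}C=\nabla_{\beta\overline{\eta}}C$, so the quantity governing the general Landsberg property is conformally invariant, and the equivalence follows from Definition~\ref{def.2a}\textbf{(b)}.

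The only subtle point, and the step I would handle most carefully, is the trace computation in part~\textbf{(b)}: one must verify that tracing $i_{\overline{\eta}}\,\A$ over the slot $\overline{Y}$ genuinely yields $\A_{o}$ as defined in (\ref{Ao}), and that this trace commutes with the contraction defining $C$ from $T$, using the conformal invariance of both $T$ and $C$ together with $\widetilde{C}=C$. Since $\widehat{P}=\nabla_{\beta\overline{\eta}}T$ and $C=\mathrm{Tr}\{\overline{Y}\mapsto T(\,\cdot\,,\overline{Y})\}$, the trace of $\widehat{P}$ in $\overline{Y}$ is indeed $\nabla_{\beta\overline{\eta}}C$ because the horizontal derivative commutes with the metric contraction; this is where I would be explicit rather than routine. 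Everything else is a direct specialization of the already-proved identity (\ref{5.eq.2}).
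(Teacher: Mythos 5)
Your proof is correct and takes essentially the same route as the paper: the identity (\ref{5.eq.2}) you invoke is precisely equation (\ref{eq.x8}), which the paper obtains by setting $\overline{X}=\overline{\eta}$ in (\ref{eq.8}), and the paper likewise proves part \textbf{(b)} by tracing that identity to obtain $\widetilde{\nabla}_{\widetilde{\beta}\overline{\eta}}\widetilde{C}=\nabla_{\beta\overline{\eta}}C-\A_{o}$. The extra care you flag about the trace commuting with the contraction is sound but is exactly the implicit step in the paper's one-line trace argument, so there is no substantive difference.
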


\begin{proof}~\par
\vspace{6pt} \noindent\textbf{(a)} Setting $\overline{X}=\o \eta$ in
 (\ref{eq.8}), we get\vspace{-0.1cm}
\begin{equation}\label{eq.x8}
\widetilde{\nabla}_{\widetilde
{\beta}\overline{\eta}}\widetilde{T}=\nabla_{\beta
\overline{\eta}}T- i_{\overline{\eta}}\,\A,\vspace{-0.1cm}
\end{equation}
from which, under the given assumption,  $\nabla_{\beta
\overline{\eta}}T$ is conformally invariant. Hence the result.

\vspace{6pt} \noindent\textbf{(b)} Taking the trace of
(\ref{eq.x8}), we obtain\vspace{-0.1cm}
\begin{equation*}\label{cc}
  \widetilde{\nabla }_{\widetilde{\beta} \overline{\eta}}\widetilde{C} =
\nabla _{\beta \overline{\eta}}C-\A_{o}. \vspace{-0.1cm}
\end{equation*}
From which the result.
\end{proof}

\begin{prop}\label{p.10}
     Assume that $i_{\overline{\eta}}\,\A=0$. Then,  $(M,L)$ is  $P$-reducible
    if, and only if, $(M,\widetilde{L})$ is  ${P}$-reducible.
\end{prop}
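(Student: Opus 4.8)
The plan is to show that, under the hypothesis $i_{\overline{\eta}}\,\A=0$, each of the three ingredients appearing in the $P$-reducibility identity of Definition \ref{def.p1} transforms in such a way that the identity written for $(M,\widetilde{L})$ collapses, after cancelling a common factor $e^{2\sigma}$, onto the identity for $(M,L)$. Since this establishes an equivalence of the two defining conditions, it settles both directions of the statement at once.

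First I would record how the $\pi$-tensor field $P(\overline{X},\overline{Y},\overline{Z}):=g(\widehat{P}(\overline{X},\overline{Y}),\overline{Z})$ behaves. By relation (\ref{5.eq.2}) in the proof of Theorem \ref{th.8}, the hypothesis $i_{\overline{\eta}}\,\A=0$ forces $\widetilde{\widehat{P}}=\widehat{P}$, so the contracted $hv$-curvature is conformally invariant. Because $\widetilde{g}=e^{2\sigma(x)}g$, this gives $\widetilde{P}(\overline{X},\overline{Y},\overline{Z})=\widetilde{g}(\widetilde{\widehat{P}}(\overline{X},\overline{Y}),\overline{Z})=e^{2\sigma(x)}P(\overline{X},\overline{Y},\overline{Z})$; that is, $P$ is conformally $\sigma$-invariant.

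Next I would treat the two objects on the right-hand side of the defining identity. The angular metric tensor is conformally $\sigma$-invariant, $\widetilde{\hbar}=e^{2\sigma(x)}\hbar$, exactly as was used in the proof of Proposition \ref{p.6}. For the recurrence form $\delta=\frac{1}{n+1}\nabla_{\beta\overline{\eta}}\,C$, I would invoke the computation carried out in Proposition \ref{p.8}\textbf{(b)}, which yields $\widetilde{\nabla}_{\widetilde{\beta}\overline{\eta}}\widetilde{C}=\nabla_{\beta\overline{\eta}}C-\A_{o}$; since $i_{\overline{\eta}}\,\A=0$ makes $\A_{o}$ vanish directly from its definition (\ref{Ao}), this gives $\widetilde{\delta}=\delta$.

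The final step is substitution: writing the $P$-reducibility identity for $(M,\widetilde{L})$ and replacing $\widetilde{P}=e^{2\sigma(x)}P$, $\widetilde{\hbar}=e^{2\sigma(x)}\hbar$, and $\widetilde{\delta}=\delta$, the factor $e^{2\sigma(x)}$ appears once on each side and cancels, so the identity holds for $(M,\widetilde{L})$ precisely when it holds for $(M,L)$. The only step that requires genuine care is the invariance of $\delta$: one might naively expect it to acquire a conformal factor, but it does not, because $\delta$ is constructed from the horizontal covariant derivative of the contracted torsion $C$ (a trace that carries no metric) rather than directly from $g$, and because $\A_{o}=0$ under the hypothesis. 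The remaining manipulations are just careful bookkeeping of the $e^{2\sigma(x)}$ factors.
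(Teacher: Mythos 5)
Your proof is correct and follows essentially the same route as the paper's: both establish that under $i_{\overline{\eta}}\,\A=0$ the tensor $P(\overline{X},\overline{Y},\overline{Z})=g(\widehat{P}(\overline{X},\overline{Y}),\overline{Z})$ and the angular metric $\hbar$ are conformally $\sigma$-invariant while $\delta$ is strictly invariant, so the defining identity for $P$-reducibility scales by a single cancellable factor $e^{2\sigma(x)}$ on each side. Your derivation of $\widetilde{\delta}=\delta$ via Proposition \ref{p.8}\textbf{(b)} and $\A_{o}=0$ is just the traced form of the paper's step that invariance of $\widehat{P}$ makes $\nabla_{\beta \overline{\eta}}T$, and hence its trace $\nabla_{\beta \overline{\eta}}C$, conformally invariant.
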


\begin{proof} Under a conformal change,
the angular metric tensor $\hbar$ is conformally
$\sigma$-invariant. On the other hand,  $\widehat{P}$ is
conformally invariant by our assumption together with
(\ref{5.eq.2}). Consequently, $\nabla_{\beta \o \eta}T$ is
conformally invariant, which implies that $\nabla_{\beta \o
\eta}C$ (or $\delta$ of Definition \ref{def.p1}) is also
conformally invariant.
\par
Now, since $P(\overline{X},\o Y,\o
Z)={g}({\widehat{P}}(\overline{X},\o Y) ,\o Z)$ is conformally
$\sigma$-invariant, then,  the tensor field
$$\mathbf{U}_{1}(\overline{X},\o Y,\o Z):={g}({\widehat{P}}(\overline{X},\o Y),\o Z)-
\mathfrak{S}_{\overline{X},\o Y,\o Z}\{\delta(\overline{X})\hbar
(\overline{Y},\overline{Z})\} \vspace{-0.1cm}$$ is conformally
$\sigma$-invariant.  From which, the result follows (provided that
$\sigma\neq0$).
\end{proof}

\begin{prop}\label{p.11}
 $(M,L)$ is $S_{3}$-like  {\em({\it resp.  $S_{4}$-like})}
 if, and only if, $(M,\widetilde{L})$ is
   $S_{3}$-like  {\em({\it resp.  $S_{4}$-like})}.
\end{prop}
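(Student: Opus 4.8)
The plan is to show that the $S_3$-like (resp.\ $S_4$-like) property is conformally invariant by exploiting the fact, already established in Theorem~\ref{th.c1}$\textbf{(a)}'$, that the vertical curvature tensor of the Cartan connection is conformally invariant, i.e.\ $\widetilde{S}(\overline{X},\overline{Y})\overline{Z}=S(\overline{X},\overline{Y})\overline{Z}$. Combined with the conformal $\sigma$-invariance of the angular metric tensor $\hbar$ (used already in Proposition~\ref{p.6}), this will reduce the entire statement to checking that every ingredient appearing in the defining formulas of Definition~\ref{3.def.3} transforms homogeneously with the correct weight.

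First I would treat the $S_3$-like case. The defining relation is
\begin{equation*}
S(\overline{X},\overline{Y},\overline{Z},\overline{W})=
\frac{Sc^{v}}{(n-1)(n-2)}\{\hbar(\overline{X},\overline{Z})\hbar(\overline{Y},\overline{W})-\hbar(\overline{X},\overline{W})\hbar(\overline{Y},\overline{Z})\}.
\end{equation*}
Here $S$ with four arguments is lowered by $g$ via~(\ref{cur.g}), so $\widetilde{S}(\overline{X},\overline{Y},\overline{Z},\overline{W})=\widetilde{g}(\widetilde{S}(\overline{X},\overline{Y})\overline{Z},\overline{W})=e^{2\sigma}g(S(\overline{X},\overline{Y})\overline{Z},\overline{W})=e^{2\sigma}S(\overline{X},\overline{Y},\overline{Z},\overline{W})$, i.e.\ the lowered $v$-curvature is conformally $\sigma$-invariant. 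On the right-hand side each $\hbar$ is conformally $\sigma$-invariant, so the product $\hbar\otimes\hbar$ carries weight $e^{4\sigma}$; hence I must check that $\widetilde{Sc^{v}}=e^{-2\sigma}Sc^{v}$ so that the overall weights match. The key intermediate computation is the transformation of the vertical Ricci tensor and vertical scalar curvature: since $Ric^{v}$ is a trace of $S(\overline{X},\overline{Z})\overline{Y}$ over the \emph{unbarred} (upper) index and $S$ is conformally invariant as a $(1,3)$-tensor, $Ric^{v}$ is itself conformally invariant, while $Sc^{v}$, obtained by raising an index with $\widetilde{g}^{-1}=e^{-2\sigma}g^{-1}$ and tracing, picks up exactly the factor $e^{-2\sigma}$. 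Substituting $\widetilde{Sc^{v}}=e^{-2\sigma}Sc^{v}$ and $\widetilde{\hbar}=e^{2\sigma}\hbar$ into the tilded version of the defining equation then reproduces, after cancelling the common $e^{2\sigma}$, exactly the original $S_3$-like relation, proving equivalence in both directions.

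Next I would carry out the $S_4$-like case by the same bookkeeping, the only new point being the tensor $\textbf{F}=\frac{1}{n-3}\{Ric^{v}-\frac{Sc^{v}\hbar}{2(n-2)}\}$. Since $Ric^{v}$ is conformally invariant and $Sc^{v}\hbar$ transforms as $e^{-2\sigma}\cdot e^{2\sigma}=e^{0}$, the combination $Ric^{v}-\frac{Sc^{v}\hbar}{2(n-2)}$ is conformally invariant, hence $\textbf{F}$ is conformally invariant. In the defining formula~(\ref{h}) every term is of the shape $\hbar\otimes\textbf{F}$, which therefore carries weight $e^{2\sigma}$, matching the weight $e^{2\sigma}$ of the lowered $\widetilde{S}$; cancelling the common factor again recovers the original relation. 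The main obstacle I anticipate is purely one of careful weight-tracking in the two scalar/tensor invariants $Sc^{v}$ and $\textbf{F}$: one must be scrupulous about which index of $S$ is traced and with which metric it is raised, since an error there would spoil the delicate matching of the $e^{2\sigma}$ powers. Once the transformation laws $\widetilde{Ric^{v}}=Ric^{v}$, $\widetilde{Sc^{v}}=e^{-2\sigma}Sc^{v}$, and $\widetilde{\textbf{F}}=\textbf{F}$ are pinned down, both equivalences follow immediately and symmetrically, so no genuine analytic difficulty remains.
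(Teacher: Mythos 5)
Your proposal is correct and takes essentially the same route as the paper: the paper forms the difference tensors $\mathbb{U}$ and $\mathbb{V}$ between the lowered $v$-curvature and the model expressions and shows $\widetilde{\mathbb{U}}=e^{2\sigma}\mathbb{U}$ and $\widetilde{\mathbb{V}}=e^{2\sigma}\mathbb{V}$, which is precisely your weight-matching of the two sides of the defining identities, resting on the same ingredients ($\widetilde{S}(\overline{X},\overline{Y},\overline{Z},\overline{W})=e^{2\sigma}S(\overline{X},\overline{Y},\overline{Z},\overline{W})$, $\widetilde{\hbar}=e^{2\sigma}\hbar$, and the conformal invariance of $Sc^{v}\hbar$ and $\textbf{F}$). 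Your explicit derivations $\widetilde{Ric^{v}}=Ric^{v}$ and $\widetilde{Sc^{v}}=e^{-2\sigma}Sc^{v}$ correctly supply transformation laws that the paper asserts without computation.
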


\begin{proof}
 Let $\mathbb{{U}}$ be the $\pi$-tensor field defined
by \vspace{-0.1cm}
\begin{eqnarray*}
 \mathbb{{U}}(\overline{X},\overline{Y},\overline{Z},\overline{W})
&:=&S(\overline{X},\overline{Y},\overline{Z},\overline{W})-
\frac{Sc^{v}}{(n-1)(n-2)} \{
\hbar(\overline{X},\overline{Z})\hbar(\overline{Y},\overline{W})-\hbar(\overline{X},\overline{W})
\hbar(\overline{Y},\overline{Z})\}.\vspace{-0.1cm}
\end{eqnarray*}
Under a conformal transformation, the $\pi$-tensor field $Sc^{v}
\hbar$  is conformally invariant and $
\widetilde{S}(\overline{X},\overline{Y},\overline{Z},\overline{W})
        = e^{2 \sigma
        (x)}S(\overline{X},\overline{Y},\overline{Z},\overline{W})$
        \cite{r62}.
        Hence, \vspace{-0.1cm}
\begin{eqnarray}\label{eq.x9}
  \widetilde{\mathbb{{U}}}(\overline{X},\overline{Y},\overline{Z},\overline{W})
&=&e^{2 \sigma
(x)}\mathbb{{U}}(\overline{X},\overline{Y},\overline{Z},\overline{W}).\vspace{-0.4cm}
\end{eqnarray}
This means that the $\pi$-tensor field $\mathbb{{U}}$ is conformally
$\sigma$-invariant.
 \par
On the other hand, let $\mathbb{{V}}$ be the $\pi$-tensor field
defined by\vspace{-0.1cm}
\begin{eqnarray*}
 \mathbb{{V}}(\overline{X},\overline{Y},\overline{Z},\overline{W})
&:=&S(\overline{X},\overline{Y},\overline{Z},\overline{W})-
 \hbar(\overline{Z},\overline{X})\textbf{F}(\overline{W},\overline{Y})
   +\hbar(\overline{Z},\overline{Y})\textbf{F}(\overline{W},\overline{X}) \\
      & & -\hbar(\overline{W},\overline{Y})\textbf{F}(\overline{Z},\overline{X})
        +\hbar(\overline{W},\overline{X})\textbf{F}(\overline{Z},\overline{Y}).\vspace{-0.1cm}
\end{eqnarray*}
Since both
 the angular metric tensor $\hbar$ and the $v$-curvature tensor $S$ are conformally $\sigma$-invariant and
$ \textbf{F}(\overline{X},\overline{Y}):=
{\frac{1}{n-3}\{Ric^v(\overline{X},\overline{Y}) - \frac{Sc^v
\hbar(\overline{X},\overline{Y})}{2(n-2)}\}}$  is conformally
invariant, we conclude that
\begin{eqnarray}\label{eq.x10}
 \widetilde{\mathbb{{V}}}(\overline{X},\overline{Y},\overline{Z},\overline{W})
     &=&e^{2 \sigma
(x)}\,\mathbb{{V}}(\overline{X},\overline{Y},\overline{Z},\overline{W}),\vspace{-0.4cm}
\end{eqnarray}
which means that the $\pi$-tensor field $\mathbb{{V}}$ is
conformally $\sigma$-invariant. The result follows from
(\ref{eq.x9}) and (\ref{eq.x10}).
\end{proof}

\begin{prop}\label{p.12}
 $(M,L)$ is  $S^{v}$-recurrent  if, and only if, $(M,\widetilde{L})$ is
  $S^{v}$-recurrent.
\end{prop}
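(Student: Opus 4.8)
The plan is to exploit the conformal invariance of the two ingredients appearing in the $S^{v}$-recurrence condition of Definition \ref{def.a1}, namely the $v$-curvature tensor $S$ of the Cartan connection and the vertical covariant differentiation $\nabla_{\gamma \overline{X}}$. By Theorem \ref{th.c1}$\textbf{(a)}'$ we already know that $S$ is conformally invariant, $\widetilde{S}(\overline{X},\overline{Y})\overline{Z}=S(\overline{X},\overline{Y})\overline{Z}$, and by Theorem \ref{th.c1}\textbf{(a)} the vertical differentiation is conformally invariant, $\widetilde{\nabla}_{\gamma \overline{X}}\overline{Y}=\nabla_{\gamma \overline{X}}\overline{Y}$ for all $\overline{Y}\in\cp$. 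The whole argument rests on assembling these two facts.

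First I would verify that the covariant derivative $\nabla_{\gamma \overline{X}}S$ is itself conformally invariant. Expanding it by the Leibniz rule,
\begin{equation*}
(\nabla_{\gamma \overline{X}}S)(\overline{Y},\overline{Z})\overline{W}
= \nabla_{\gamma \overline{X}}\big(S(\overline{Y},\overline{Z})\overline{W}\big)
- S(\nabla_{\gamma \overline{X}}\overline{Y},\overline{Z})\overline{W}
- S(\overline{Y},\nabla_{\gamma \overline{X}}\overline{Z})\overline{W}
- S(\overline{Y},\overline{Z})\nabla_{\gamma \overline{X}}\overline{W},
\end{equation*}
every term on the right is built solely from $S$ and $\nabla_{\gamma \overline{X}}$, both of which coincide with their tilde counterparts. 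Hence $(\widetilde{\nabla}_{\gamma \overline{X}}\widetilde{S})(\overline{Y},\overline{Z})\overline{W}=(\nabla_{\gamma \overline{X}}S)(\overline{Y},\overline{Z})\overline{W}$.

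Now suppose $(M,L)$ is $S^{v}$-recurrent, so that $(\nabla_{\gamma \overline{X}}S)(\overline{Y},\overline{Z})\overline{W}=\lambda(\overline{X})\,S(\overline{Y},\overline{Z})\overline{W}$ for some $\pi$-form $\lambda$ of order one. Substituting the two invariances above, the left-hand side equals $(\widetilde{\nabla}_{\gamma \overline{X}}\widetilde{S})(\overline{Y},\overline{Z})\overline{W}$ and the right-hand side equals $\lambda(\overline{X})\,\widetilde{S}(\overline{Y},\overline{Z})\overline{W}$, so $(M,\widetilde{L})$ is $S^{v}$-recurrent with the \emph{same} recurrence form $\lambda$. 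The converse is immediate from the symmetry of the conformal relation (interchanging the roles of $g$ and $\widetilde{g}=e^{2\sigma(x)}g$, equivalently $g=e^{-2\sigma(x)}\widetilde{g}$), so no separate computation is needed. There is no genuine obstacle here; the only point requiring care is the Leibniz expansion, which confirms that vertical differentiation passes through unchanged precisely because each constituent object is conformally invariant.
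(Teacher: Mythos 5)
Your proposal is correct and follows essentially the same route as the paper, whose proof is a one-line appeal to the very two facts you use: the conformal invariance of the map $\nabla_{\gamma \overline{X}}$ (Theorem \ref{th.c1}\textbf{(a)}) and of the $v$-curvature tensor $S$ (Theorem \ref{th.c1}$\textbf{(a)}'$). Your Leibniz-rule expansion and the symmetry remark for the converse merely make explicit what the paper leaves implicit.
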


\begin{proof}
Follows from the fact that both the map $\nabla_{\gamma
\overline{X}}: \o Y\longmapsto\nabla_{\gamma \overline{X}} \o Y$
  and  the $v$-curvature
 tensor $ S$ are conformally invariant.
\end{proof}

\begin{prop}\label{p.14}
   Assume that the $\pi$-tensor field $H$ defined in Theorem \ref{th.c1} has the property
   that  $H(\overline{\eta},\overline{X})\overline{\eta}
  = 0$ for all  $\overline{X} \in \mathfrak{X}(\pi(M))$. Then, $(M,L)$ is of  scalar curvature
 if, and only if, $(M,\widetilde{L})$ is  of scalar curvature.
\end{prop}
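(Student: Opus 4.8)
The plan is to show that the defining condition of scalar curvature, namely
$$R(\overline{\eta},\overline{X},\overline{\eta},\overline{Y})=kL^{2}\hbar(\overline{X},\overline{Y}),$$
transforms in a controlled way under the conformal change, and that the hypothesis $H(\overline{\eta},\overline{X})\overline{\eta}=0$ is exactly what forces the relevant piece of the curvature change to drop out. First I would recall from Theorem \ref{th.c1}$\textbf{(c)}'$ that the $h$-curvature tensors of the Cartan connections are related by $\widetilde{R}(\overline{X},\overline{Y})\overline{Z}=R(\overline{X},\overline{Y})\overline{Z}+H(\overline{X},\overline{Y})\overline{Z}$. The scalar-curvature condition involves the fully contracted quantity $R(\overline{\eta},\overline{X},\overline{\eta},\overline{Y})=g(R(\overline{\eta},\overline{X})\overline{\eta},\overline{Y})$, so I would feed in the two slots $\overline{\eta}$ and apply $g$ against $\overline{Y}$.

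Next I would compute $\widetilde{R}(\overline{\eta},\overline{X},\overline{\eta},\overline{Y})=\widetilde{g}(\widetilde{R}(\overline{\eta},\overline{X})\overline{\eta},\overline{Y})$. Using $\widetilde{g}=e^{2\sigma(x)}g$ and the curvature relation, this becomes
$$\widetilde{R}(\overline{\eta},\overline{X},\overline{\eta},\overline{Y})=e^{2\sigma(x)}\bigl\{g(R(\overline{\eta},\overline{X})\overline{\eta},\overline{Y})+g(H(\overline{\eta},\overline{X})\overline{\eta},\overline{Y})\bigr\}.$$
Under the standing hypothesis $H(\overline{\eta},\overline{X})\overline{\eta}=0$, the second term vanishes identically, leaving $\widetilde{R}(\overline{\eta},\overline{X},\overline{\eta},\overline{Y})=e^{2\sigma(x)}R(\overline{\eta},\overline{X},\overline{\eta},\overline{Y})$; that is, the $h$-curvature evaluated in this contracted form is conformally $\sigma$-invariant. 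The one ingredient I need to pin down carefully is the behaviour of the right-hand side $kL^{2}\hbar(\overline{X},\overline{Y})$: since $\widetilde{L}^{2}=e^{2\sigma(x)}L^{2}$ and the angular metric tensor $\hbar$ is conformally $\sigma$-invariant (as used repeatedly, e.g.\ in Proposition \ref{p.6} and Proposition \ref{p.11}), I must verify that the scalar function $k$ is itself conformally invariant so that $\widetilde{k}\,\widetilde{L}^{2}\,\widetilde{\hbar}=e^{2\sigma}\,k L^{2}\hbar\cdot e^{2\sigma}$ matches correctly, or else absorb the discrepancy into $k$.

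If $(M,L)$ is of scalar curvature with function $k$, then substituting gives
$$\widetilde{R}(\overline{\eta},\overline{X},\overline{\eta},\overline{Y})=e^{2\sigma}kL^{2}\hbar(\overline{X},\overline{Y}).$$
The goal is to match this against $\widetilde{k}\,\widetilde{L}^{2}\,\widetilde{\hbar}(\overline{X},\overline{Y})=\widetilde{k}\,e^{2\sigma}L^{2}\,e^{2\sigma}\hbar(\overline{X},\overline{Y})$, which forces the identification $\widetilde{k}=e^{-2\sigma}k$. Thus $(M,\widetilde{L})$ is of scalar curvature with scalar function $\widetilde{k}=e^{-2\sigma}k$, and the equivalence follows because the hypothesis on $H$ is symmetric in the two structures and the whole argument reverses verbatim. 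The main obstacle I anticipate is bookkeeping the conformal weights: I must be careful that $\hbar$ is only $\sigma$-invariant (weight $e^{2\sigma}$) while $T$ and $S$-type objects may carry different weights, so the single place where an error could creep in is in reconciling the weight of $\widetilde{L}^{2}\widetilde{\hbar}$ against the left-hand contracted curvature, and verifying that the resulting $\widetilde{k}$ is a genuine well-defined scalar function on $\widetilde{TM}$ rather than merely a formal expression.
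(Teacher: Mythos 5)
Your proposal is correct and follows essentially the same route as the paper's own proof: contract Theorem \ref{th.c1}$\textbf{(c)}'$ with $\overline{\eta}$ in the appropriate slots, use the hypothesis $H(\overline{\eta},\overline{X})\overline{\eta}=0$ to obtain $\widetilde{R}(\overline{\eta},\overline{X},\overline{\eta},\overline{Y})=e^{2\sigma(x)}R(\overline{\eta},\overline{X},\overline{\eta},\overline{Y})$, and then use the conformal $\sigma$-invariance of $L^{2}$ and $\hbar$ to identify the new scalar function as $e^{-2\sigma(x)}k$, exactly the paper's $k_{o}$. Your weight bookkeeping resolves exactly as in the paper, so there is nothing to add.
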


 \begin{proof}
   By Theorem \ref{th.c1}$\textbf{(c)}'$, we have\vspace{-0.1cm}
   $$  \widetilde
{R}(\overline{\eta},\overline{X},\overline{\eta},\overline{Y})
          = e^{2 \sigma(x)} R(\overline{\eta},\overline{X},
          \overline{\eta},\overline{Y})+
        e^{2
        \sigma(x)}g(H(\overline{\eta},\overline{X})\overline{\eta},\overline{Y}),\vspace{-0.1cm}$$
which implies, by hypothesis, that  \vspace{-0.1cm}
\begin{equation}\label{111}
     \widetilde
{R}(\overline{\eta},\overline{X},\overline{\eta},\overline{Y})
          = e^{2 \sigma(x)} R(\overline{\eta},\overline{X},
          \overline{\eta},\overline{Y}).\vspace{-0.1cm}
\end{equation}
Now, let $(M,L)$ be of  scalar curvature, then the $h$-curvature
tensor $ R$ has the form \vspace{-0.1cm}
$$R(\overline{\eta},\overline{X},\overline{\eta},\overline{Y})=
  k L^{2} \hbar(\overline{X},\overline{Y}). \vspace{-0.1cm}$$
 This, together with (\ref{111}), imply that
  $$ \widetilde{R}(\overline{\eta},\overline{X},\overline{\eta},\overline{Y})
  =e^{2\sigma(x)}kL^{2}\hbar(\overline{X},\overline{Y})=
  e^{-2\sigma(x)}k \widetilde{L}^{2}
  \widetilde{\hbar}(\overline{X},\overline{Y}),$$
 where we have used the fact that both $L^{2}$ and $\hbar$
  are conformally $\sigma$-invariant.
  Hence

$$ \widetilde{R}(\overline{\eta},\overline{X},\overline{\eta},\overline{Y})
=  k_{o} \widetilde{L}^{2}
\widetilde{\hbar}(\overline{X},\overline{Y}),$$
 where $ k_{o}=e^{-2\sigma(x)}k$.
 \end{proof}

\begin{prop}\label{p.15}Assume that the given conformal change is homothetic. Then, we
have\vspace{-0.1cm}
\begin{description}
 \item[(a)] $(M,L)$ is  $P_{2}$-like if, and
only if, $(M,\widetilde{L})$ is ${P}_{2}$-like .

\item[(b)] $(M,L)$ is  $h$-isotropic
 if, and only if, $(M,\widetilde{L})$ is $h$-isotropic .

\item[(c)] $(M,L)$ is of constant curvature
 if, and only if, $(M,\widetilde{L})$   is of constant curvature.

    \item[(d)] $(M,L)$ is of  $p$-scalar curvature
if, and only if, $(M,\widetilde{L})$ is of  $p$-scaler curvature.

 \item[(e)] $(M,L)$ is of  $s$-$ps$-curvature if, and only if,
$(M,\widetilde{L})$ is of  $s$-$ps$-curvature.

    \item[(f)] $(M,L)$ is  $R_{3}$-like if, and
only if, $(M,\widetilde{L})$ is $R_{3}$-like.

 \item[(g)] $(M,L)$ is  symmetric if, and
only if, $(M,\widetilde{L})$ is symmetric.

 \item[(h)] $(M,L)$ is  $P$-symmetric if, and
only if, $(M,\widetilde{L})$ is ${P}$-symmetric.

\end{description}
\end{prop}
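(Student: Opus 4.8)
The plan is to prove all eight equivalences uniformly by exploiting the single simplifying feature of a homothetic change: when $\sigma(x)$ is constant, $d\sigma=0$, and hence $\sigma_1 = d_G\sigma = 0$, $grad_v\sigma=0$, and $F=[J,grad_v\sigma]=0$. Substituting these into the expression for $\L$ in Lemma \ref{lem.c1}, we obtain $\L = 0$, so that $\widetilde\Gamma = \Gamma$, $\widetilde\beta = \beta$, $\widetilde K = K$, and $N=0$, $B=0$. First I would record this reduction explicitly, because it is the engine behind every part: with $\L=0$ and $B=0$, Theorem \ref{th.c1} gives $\widetilde\nabla = \nabla$, and Theorems \ref{th.4B}, \ref{tth.6}, \ref{Th.6} collapse so that the \emph{full} curvature tensors coincide, namely $\widetilde R = R$, $\widetilde P = P$, $\widetilde S = S$ (and likewise for the Berwald-connection curvatures $R^\circ, P^\circ, S^\circ$). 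In other words, under a homothety every connection and every curvature tensor is literally conformally invariant, and only the metric $g$ rescales by the constant factor $e^{2\sigma}$.

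With this in hand, each of the eight cases reduces to checking that the defining tensor identity is preserved, and the verification is a matter of matching homogeneity weights on both sides. The recurring facts I would invoke are: $\widetilde g = e^{2\sigma}g$, the Cartan tensor satisfies $\widetilde T = e^{2\sigma}T$ (so $\widetilde T(\o X,\o Y,\o Z)=e^{2\sigma}T(\o X,\o Y,\o Z)$ after lowering), $\widetilde L^2 = e^{2\sigma}L^2$, $\widetilde\hbar = e^{2\sigma}\hbar$, while $R, P, S$ (as vector-valued tensors) are invariant and the \emph{four-fold} curvature scalars pick up one factor of $e^{2\sigma}$ from the metric used to lower an index. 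For \textbf{(a)} $P_2$-likeness (Definition \ref{def.p2like}), both sides of the defining relation $P(\o X,\o Y,\o Z,\o W)=\alpha(\o Z)T(\o X,\o Y,\o W)-\alpha(\o W)T(\o X,\o Y,\o Z)$ scale by $e^{2\sigma}$ provided $\alpha$ is taken invariant, so the identity transports; for \textbf{(h)} $P$-symmetry the statement is purely about the symmetry $P(\o X,\o Y)\o Z=P(\o Y,\o X)\o Z$ of the invariant tensor $P$, hence trivially preserved. For \textbf{(g)} symmetry (Definition \ref{3.def.4}), since $\widetilde{D^\circ}=D^\circ$ and $\widetilde\beta=\beta$ and $\widetilde{R^\circ}=R^\circ$, the condition $D^\circ_{\beta\o X}R^\circ=0$ is manifestly invariant.

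The parts that require the homothety constant to be absorbed are \textbf{(b)}--\textbf{(f)}. For \textbf{(b)} $h$-isotropy and \textbf{(c)} constant curvature (Definitions \ref{def.2}, \ref{def.2}), the relation $R(\o X,\o Y)\o Z = k_o\{g(\o X,\o Z)\o Y - g(\o Y,\o Z)\o X\}$ has an invariant left side but a right side carrying one factor of $g$; since $\widetilde g = e^{2\sigma}g$, the identity persists with $\widetilde k_o = e^{-2\sigma}k_o$, which is again a constant precisely because $\sigma$ is constant — this is exactly why homotheticity, and not a general conformal change, is needed, and I expect this bookkeeping of the scalar factor to be the main (if mild) obstacle throughout. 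The same mechanism handles \textbf{(d)} $p$-scalar curvature (where the defining tensor $\phi$ is invariant, the $\hbar\hbar$ terms scale by $e^{4\sigma}$ to match $R(\phi\cdot,\dots)$ lowered once by $g$ and the scalar $R_o$ absorbs the leftover), \textbf{(e)} $s$-$ps$-curvature (immediate from \textbf{(d)} together with Proposition \ref{p.14}, whose hypothesis $H(\o\eta,\o X)\o\eta=0$ holds automatically here since $H=0$ when $B=0$), and \textbf{(f)} $R_3$-likeness (Definition \ref{def.r3}), where I would note that the auxiliary form $F=\tfrac{1}{n-2}\{Ric^h - \tfrac{Sc^h g}{2(n-1)}\}$ is built so that both $Ric^h g^{-1}$-type combinations rescale consistently, so $F$ is conformally invariant and the four-term expression transports after extracting the single $e^{2\sigma}$. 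In each case the converse is automatic because a homothety is invertible with inverse homothety, so I would simply remark that the argument is symmetric in $L$ and $\widetilde L$.
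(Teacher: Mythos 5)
Your proposal is correct and follows essentially the same route as the paper: the paper's entire proof is the single observation (cited from its reference [Youssef--Abed--Soleiman, Tensor 2008]) that $\sigma(x)$ is constant if and only if $\widetilde{\nabla}_{X}\o{Y}=\nabla_{X}\o{Y}$, from which the invariance of all curvature tensors and the constant-factor bookkeeping are left implicit. You merely derive this key fact yourself (via $\L=0$, hence $N=B=0$, in Lemma \ref{lem.c1} and Theorem \ref{th.c1}) and then spell out the case-by-case absorption of $e^{2\sigma}$ that the paper omits, all of which checks out.
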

\begin{proof}
  The proof  follows from  the fact that:\\
  ${\qquad\qquad\qquad\qquad\qquad}\sigma (x) \, \text{is constant} \Longleftrightarrow \widetilde{\nabla}_{X}\o Y=
 \nabla_{X}\o Y  \,\,\cite{r62}. $
\end{proof}

\bigskip

\par Summing up, the results of this section can be gathered in the
following\vspace{-0.1cm}
\begin{thm}\label{th.9}The following properties are conformally
invariant{\em:}\\
 --\ being Riemannian,\qquad\qquad\qquad\qquad \qquad\qquad  --\ being semi-$C$-reducible, \\
 --\ being $C$-reducible, \qquad\qquad\qquad\qquad  \qquad\qquad --\ being $C_{2}$-like, \\
 --\ being quasi-$C$-reducible, \qquad\qquad\qquad\qquad\quad\, --\ being
 $C^{v}$-recurrent,\\
 --\  being $C^{0}$-recurrent,\qquad\qquad\:\qquad\qquad\qquad\quad\, --\ being $S^{v}$-recurrent,\\
 --\  being $S_{3}$-like,\qquad\qquad\:\qquad\qquad\qquad\quad\qquad\quad\,--\  being $S_{4}$-like. \\
The following properties are conformally invariant under certain
conditions{\em\,:}\\
  --\ being locally Minkowskian,\quad\quad\qquad\qquad \qquad\qquad\quad \, --\ being Berwald,\\
--\ being $C^{h}$-recurrent,\quad\quad\qquad\qquad\qquad\qquad\quad\,\,\,--\ being $P^*$-manifold,\\
--\  being Landsberg,\!\!\quad\quad\qquad\qquad\qquad\qquad\qquad\quad \,--\ being general Landsberg,\\
  --\  being $P$-symmetric,\!\!\!\!\quad\quad\qquad\qquad \qquad\qquad\qquad\,--\ being $P_{2}$-like,\\
 --\ being $P$-reducible,\quad\quad\qquad\qquad \qquad\qquad\qquad\,--\ being $h$-isotropic,\\
  --\ being of scalar curvature,\!\!\quad\quad\qquad\qquad \quad\quad\,\,\,\,\,\,
  --\  being of constant curvature,\\
--\ being
$R_{3}$-like,\,\qquad\quad\quad\qquad\qquad\qquad\qquad\qquad\!\!\,--\
being of $p$-scaler
curvature,\\
 --\   being of $s$-$ps$ curvature,\,\qquad\quad\quad\qquad \qquad\quad\, --\  being symmetric.
\end{thm}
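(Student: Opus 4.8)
The plan is to prove this summary theorem purely as a consolidation of the individual results established throughout Section~3, since each bulleted property corresponds to exactly one of the propositions or theorems already proved. The entire strategy is to organize the conclusion into the two groups the statement itself names --- the \emph{unconditionally} conformally invariant properties and those invariant \emph{under certain conditions} --- and to cite the appropriate earlier result for each. No new computation is needed; the theorem is a table of contents for the section.

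First I would handle the unconditional list. Being Riemannian follows from Proposition~\ref{p.3}\textbf{(a)}; being semi-$C$-reducible, $C$-reducible, and $C_{2}$-like all follow from Proposition~\ref{p.6}; being quasi-$C$-reducible is Proposition~\ref{p.7}; being $C^{v}$-recurrent and $C^{0}$-recurrent are both Proposition~\ref{.p.4}; being $S^{v}$-recurrent is Proposition~\ref{p.12}; and being $S_{3}$-like or $S_{4}$-like is Proposition~\ref{p.11}. The common thread underlying all of these is that the relevant tensors --- the $(h)hv$-torsion $T$ (and hence $C$), the $v$-curvature $S$, the operator $\nabla_{\gamma\overline{X}}$, and the angular metric $\hbar$ (which is conformally $\sigma$-invariant) --- transform in a way that leaves each defining algebraic identity form-invariant, so I would note this as the conceptual reason but defer to the cited propositions for the details.

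Next I would treat the conditional list in the same citation-by-citation fashion. Being locally Minkowskian is Proposition~\ref{p.3}\textbf{(b)} (under $D^{\circ}_{\gamma\overline{X}}B^{\circ}=0$ and $H(\overline{X},\overline{Y})\overline{\eta}=0$); being Berwald and $C^{h}$-recurrent are Proposition~\ref{p.4} (under $\A=0$); being a $P^{*}$-manifold is Proposition~\ref{p.5} (under $i_{\overline{\eta}}\A=0$); being Landsberg and general Landsberg are Proposition~\ref{p.8} (under $i_{\overline{\eta}}\A=0$ and $\A_{o}=0$ respectively, with Theorem~\ref{th.8} supplying the governing identity $\widehat{P}-\widetilde{\widehat{P}}=i_{\overline{\eta}}\A$); being $P$-reducible is Proposition~\ref{p.10} (under $i_{\overline{\eta}}\A=0$); and the homothetic cases --- $P$-symmetric, $P_{2}$-like, $h$-isotropic, of scalar/constant/$p$-scalar/$s$-$ps$ curvature, $R_{3}$-like, and symmetric --- are collected in Proposition~\ref{p.15}, except that being of scalar curvature has the cleaner hypothesis $H(\overline{\eta},\overline{X})\overline{\eta}=0$ from Proposition~\ref{p.14}.

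The only genuine obstacle is bookkeeping rather than mathematics: I must make sure every one of the roughly twenty listed properties is matched to a citation with the correct hypothesis, and that nothing in the displayed two-column list is silently promoted from the conditional group to the unconditional group or vice versa. In particular I would double-check the $P$-symmetric entry, which appears in the conditional list here and is covered by Proposition~\ref{p.15}\textbf{(h)} under the homothety assumption. Having verified the correspondence is exhaustive and consistent, the theorem follows immediately, so I would simply write ``The proof is an immediate consequence of Propositions~\ref{p.3}--\ref{p.15} together with Theorem~\ref{th.8}'' and close.
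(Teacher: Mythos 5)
Your proposal is correct and matches the paper exactly: Theorem~\ref{th.9} is presented there as a pure summary (``Summing up, the results of this section can be gathered in the following''), with no new argument, and your citation-by-citation consolidation --- Propositions~\ref{p.3}--\ref{p.12} for the unconditional list and Propositions~\ref{p.3}\textbf{(b)}, \ref{p.4}, \ref{p.5}, \ref{p.8}, \ref{p.10}, \ref{p.14}, \ref{p.15} (with Theorem~\ref{th.8}) for the conditional one --- assigns each property to the same result with the same hypothesis as the paper intends.
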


\bigskip

\begin{rem}It should be noted that some important results of
  {\em \cite{r23}, \cite{r5}, \cite{r6}} {\em{(}}obtained in local coordinates{\em{)}} are
retrieved from the above mentioned global results {\em{(}}when
localized{\em{)}}.
\end{rem}

\bigskip
\providecommand{\bysame}{\leavevmode\hbox
to3em{\hrulefill}\thinspace}
\providecommand{\MR}{\relax\ifhmode\unskip\space\fi MR }
\providecommand{\MRhref}[2]{%
  \href{http://www.ams.org/mathscinet-getitem?mr=#1}{#2}
}
\providecommand{\href}[2]{#2}

\end{document}